\title{A survey on Almost Difference Sets}
\theoremstyle{plain}
\newtheorem{thm}{Theorem}
\newtheorem{prop}[thm]{Proposition}
\newtheorem{lem}[thm]{Lemma}
\newtheorem{cor}[thm]{Corollary}
\newtheorem{rem}[thm]{Remark}
\newtheorem{exa}[thm]{Example}
\newcommand{\mc}{\mathcal}
\newcommand{\hs}{\hspace{1mm}}
\date{\today}     % Activate to display a given date or no date
\begin{document} \pagenumbering{arabic} \setcounter{page}{1}
\author[K.~Nowak]{Kathleen Nowak}
\thanks{Department of Mathematics, Iowa State
University, Ames, IA 50011, USA (knowak@iastate.edu). }

\address{Department of Mathematics, Iowa State University,
Ames, Iowa, 50011, U. S. A.} \email[K.~Nowak]{knowak@iastate.edu}

\maketitle
\thanks

\begin{abstract} Let $G$ be an additive group of order $v$. A $k$-element subset $D$ of $G$ is called a $(v, k, \lambda, t)$-almost difference set if the expressions $gh^{-1}$, for $g$ and $h$ in $D$, represent $t$ of the non-identity elements in $G$ exactly $\lambda$ times and every other non-identity element $\lambda+1$ times.  Almost difference sets are highly sought after as they can be used to produce functions with optimal nonlinearity, cyclic codes, and sequences with three-level autocorrelation. This paper reviews the recent work that has been done on almost difference sets and related topics. In this survey, we try to communicate the known existence and nonexistence results concerning almost difference sets. Further, we establish the link between certain almost difference sets and binary sequences with three-level autocorrelation. Lastly, we provide a thorough treatment of the tools currently being used to solve this problem. In particular, we review many of the construction methods being used to date, providing illustrative proofs and many examples. \\

{\it Keywords:} cyclotomic classes, group ring, difference set, partial difference set. \\

%{\it Mathematics Subject Classification:} 05B05, 05E30
\end{abstract}

\newpage
\section{Introduction}

Almost difference sets are interesting combinatorial objects which have a wide range of applications. They are used in many areas of engineering including cryptography, coding theory, and CDMA communications. In cryptography, they are employed to construct functions with optimal nonlinearity (\cite{CDR}, \cite{Di94}). Additionally, in coding theory, they can be used to construct cyclic codes \cite{Di13}. Finally, they have garnered a lot of attention from CDMA communication since some cyclic almost difference sets yield sequences with optimal three-level autocorrelation (\cite{AD}, \cite{DHL}, \cite{DH}). 

Let $G$ be an additive group of order $v$ with identity $0$. A $k$-element subset $D \subset G$ is called a $(v, k, \lambda, t)$-\textit{almost difference set} (ADS) if the multiset
$$
\mathcal{D} = ( \hs gh^{-1} \hs | \hs g, h  \in D)
$$
contains $t$ elements of $G \setminus \{0\}$ with multiplicity $\lambda$ and the remaining nonidentity elements with multiplicity $\lambda+1$.  An almost difference set $D$ is called \textit{abelian} or \textit{cyclic} if the group $G$ is abelian or cyclic respectively.  We now give an equivalent definition which may be preferable depending on the application. A $k$-subset $D \subset G$ is said to be a $(v, k, \lambda, t)$-\textit{almost difference set} (ADS) if the difference function $d_D(x) : = |(D+x) \cap D|$ takes on the value $\lambda$ altogether $t$ times and $\lambda +1$ altogether $v-t-1$ times as $x$ ranges over $G\setminus \{0\}$. That is,
$$
d_D(x) = |(D+x) \cap D| = \lambda \text{ or } \lambda+1
$$
for each $x \in G \setminus \{0\}$.

Different definitions of an almost difference set were independently developed by Davis and Ding in the early 1990s. For an additive group $G$ with subgroup $H$, we call a $k$-subset $D \subset G$  a $(|G|, |H|, k, \lambda_1, \lambda_2)$-\textit{divisible difference set} (DDS) if the difference function $d_D(x)$ defined above takes on the value $\lambda_1$ for each nonzero $x \in H$ and takes on the value $\lambda_2$ for each nonzero $x \in G\setminus H$. That is, for each $x \in G \setminus \{0\}$,
$$
d_D(x) =
\begin{cases}
\lambda_1 , & \text{for }x \in H \\
\lambda_2 , & \text{for } x \in G \setminus H
\end{cases}
$$
Davis \cite{Da} called a divisible difference set an almost difference set in the case where $|\lambda_1 - \lambda_2| = 1$. He highlighted this case as it relates to symmetric difference sets. In a different light, Ding originally restricted the current definition to the case when $t = \frac{v-1}{2}$ for constructing cryptographic functions with optimal nonlinearity. The two notions were generalized to the current definition by Ding, Helleseth, and Martinsen \cite{DH} with the aim of constructing binary sequences with three-level autocorrelation. Note that the two original ideas are now special cases of this modern definition.

 \indent Observe that when $t = 0$ or $v-1$, $D$ is an $(v, k, \lambda)$-\textit{difference set} (DS). Difference sets have been extensively studied. Their use to construct symmetric block designs dates back to R.C. Bose and his 1939 paper \cite{Bose}. However, examples such as the Paley difference set appeared even earlier. For more information on difference sets, we refer the reader to Beth-Jungnickel-Lenz \cite{BJ}. From henceforth, we will restrict our attention to when $0 < t < v-1$.

The rest of this paper consists of the following nine sections:
(2) Motivation: almost difference sets and binary sequences with three-level autocorrelation; In this section we motivate our study by establishing a link between almost difference sets and binary sequences with three-level autocorrelation. (3) Difference sets and group rings; Here we introduce a tool for working with different types of difference sets (families). (4) Some necessary conditions for the existence of almost difference sets; In this section, we summarize the known parameter restrictions for almost difference sets.
(5) Difference sets, partial difference sets, and almost difference sets; Here we highlight the interplay between difference sets, partial difference sets, and almost difference sets. (6) Almost difference sets from cyclotomic classes of finite fields; (7) Almost difference sets from functions; (8) Almost difference sets from binary sequences with three-level autocorrelation; (9) Almost difference sets from direct product constructions; In sections 6-9, we summarize the known constructions. (10) Concluding remarks and open problems.

\section{Motivation: Almost Difference Sets and Binary Sequences with Three-Level Autocorrelation}

In this section, we motivate our studies by divulging how almost difference sets are related to binary sequences with good autocorrelation properties. Throughout this section $\{s(t)\}$ will denote a periodic binary sequence with period $n$.

\indent For a subset $D$ of the ring $\mathbb{Z}_n$, we define the \textit{characteristic sequence} $\{s(t)\}$ of $D$ by
 $$
s(t) =
\begin{cases}
1, & \text{if } t (\text{mod }n) \in D \\
0, & \text{otherwise }
\end{cases}
$$
Note that defining $\{s(t)\}$ in this way yields a periodic sequence with period $n$. Now going in the opposite direction, given a periodic binary sequence, $\{s(t)\}$, of period $n$, we call $D = \{t \in \mathbb{Z}_{n} \hs | \hs s(t) = 1 \}$ the \textit{characteristic set} or \textit{support} of $\{s(t)\}$.

\indent Next, for two periodic binary sequences, $\{s(t)\}$ and $\{u(t)\}$, of period $n$, we define their \textit{periodic cross correlation function} as follows:
$$
C_{s,u} (w) = \sum_{t \in \mathbb{Z}_n} (-1)^{s(t+w)-u(t)}
$$

As a special case, we can consider when $u(t) = s(t)$. In this case, we define the \textit{autocorrelation function} of $\{s(t)\}$ as
$$
C_s(w) = \sum_{t \in \mathbb{Z}_n} (-1)^{s(t+w)-s(t)}
$$

\indent That is, for a shift $w$, $C_s(w)$ is the difference between the number of positions that match and and the number of positions that differ over one period between the shifted sequence $\{s(t+w)\}$ and the original sequence $\{s(t)\}$.

\indent Pseudorandom sequences have applications in simulation, software testing, global positioning systems, ranging systems, code-division multiple-access systems, radar systems, broad-spectrum communication systems, and stream ciphers \cite{DHL}. Further, many of these applications require binary sequences with good autocorrelation properties \cite{CDR}. Specifically, the goal is to obtain sequences whose autocorrelation values are as small in absolute value as possible while minimizing the total number of distinct autocorrelation values. To characterize the number of distinct autocorrelation values, we say that $\{s(t)\}$ has \textit{$k$-level autocorrelation} if $C_s(w)$ takes on $k$ distinct values. Further, $\{s(t)\}$ is said to have \textit{optimal autocorrelation} if for all $w \not \equiv 0 (\text{mod }n)$,
$$
C_s(w) \in
\begin{cases}
\{-1\}, & \text{if } n \equiv 3(\text{mod }4) \\
\{1,-3\} & \text{if } n \equiv 1(\text{mod }4) \\
\{-2,2\}, & \text{if } n \equiv 2(\text{mod }4) \\
\{0,4\} \text{ or } \{0, -4\}, & \text{if } n \equiv 0(\text{mod }4)
\end{cases}
$$
and $\{s(t)\}$ is said to have \textit{ideal autocorrelation} if $n \equiv 3 (\text{mod }4)$ and $C_s(w) = -1$ for all $w \not \equiv 0(\text{mod }n)$.  Lastly, as a secondary goal it is often beneficial to have the number of ones in a periodic segment as close to $\frac{n}{2}$ as possible. Such sequences are said to have optimum balance between $0$'s and $1$'s.

\indent The following lemma provides the relation between autocorrelation values and (almost) difference sets.

\begin{lem} \cite{CDR} \label{lem2}
Let $\{s(t)\}$ be a periodic binary sequence with period $n$ and support $D$. Then
$$
C_s(w) = n - 4(k-d_D(w))
$$
where $k = |D|$.
\end{lem}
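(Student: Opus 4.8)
The plan is to evaluate the defining sum for $C_s(w)$ by partitioning the index set $\mathbb{Z}_n$ according to the four possible values of the pair $(s(t),s(t+w))\in\{0,1\}^2$. Since $\{s(t)\}$ is binary, the summand $(-1)^{s(t+w)-s(t)}$ equals $+1$ exactly when $s(t+w)=s(t)$ and $-1$ exactly when $s(t+w)\neq s(t)$. Hence $C_s(w)=A-B$, where $A$ is the number of $t\in\mathbb{Z}_n$ with $s(t+w)=s(t)$ and $B$ the number with $s(t+w)\neq s(t)$; clearly $A+B=n$, so it suffices to compute $A$ in terms of $k$ and $d_D(w)$.

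First I would translate the conditions back to the support $D$, using $s(t)=1\iff t\in D$ and $s(t+w)=1\iff t\in D-w$. Then the number of $t$ with $s(t)=s(t+w)=1$ is $|D\cap(D-w)|$, and translating by $w$ gives $|D\cap(D-w)|=|(D+w)\cap D|=d_D(w)$, which matches the normalization of the difference function used above. Next I would count the $t$ with $s(t)=s(t+w)=0$, i.e. $t\notin D\cup(D-w)$, by inclusion--exclusion: since $|D|=|D-w|=k$ and $|D\cap(D-w)|=d_D(w)$, this count equals $n-(2k-d_D(w))$.

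Adding the two contributions yields $A=d_D(w)+\bigl(n-2k+d_D(w)\bigr)=n-2k+2d_D(w)$, hence $B=n-A=2k-2d_D(w)$, and therefore
$$
C_s(w)=A-B=n-4k+4d_D(w)=n-4\bigl(k-d_D(w)\bigr),
$$
which is the claimed identity.

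Every step here is an elementary counting identity, so there is no real obstacle; the only point requiring a moment of care is the direction of translation (shifting $D$ by $w$ versus $-w$), which is harmless because $|(D+w)\cap D|=|(D-w)\cap D|$, so either convention produces the same $d_D(w)$. One could equally phrase the computation in the group ring $\mathbb{Z}[\mathbb{Z}_n]$ via the product $D D^{(-1)}$, the viewpoint developed in the next section, but I would keep the bare-hands count here since it makes the appearance of the factor $4$ most transparent.
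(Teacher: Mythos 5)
Your proof is correct and follows essentially the same route as the paper: both arguments split $\mathbb{Z}_n$ according to the four possible values of the pair $(s(t),s(t+w))$, identify the resulting counts with $|(D-w)\cap D|=d_D(w)$, $k-d_D(w)$, and $n-2k+d_D(w)$, and subtract mismatches from matches to obtain $n-4(k-d_D(w))$. The only difference is cosmetic (you aggregate the agree/disagree cases before counting, while the paper tabulates all four intersections explicitly), so there is nothing further to add.
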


\begin{proof}
Fix a shift $w \in \mathbb{Z}_n$. Then there are four possible combinations for $s(t+w)$ and $s(t)$ which we summarize below.
\[\begin{array}{| c | c | c | c |}
\hline
s(t+w) & s(t) & t & (-1)^{s(t+w)-s(t)}  \\
\hline
0 & 0 & t \in (D-w)^* \cap D^* & 1  \\
0& 1 & t \in (D-w)^* \cap D & -1   \\
1 & 0 & t \in (D-w) \cap D^* & -1 \\
1 & 1 & t \in (D-w) \cap D &  1  \\
\hline
\end{array}\]
Thus,
\begin{eqnarray*}
C_s(w) &=& \sum_{t \in \mathbb{Z}_n}(-1)^{s(t+w)-s(t)} \\
&=& |((D-w)^* \cap D^*) \cup( (D-w) \cap D)|-| ((D-w)^* \cap D) \cup ((D-w) \cap D^*) | \\
&=& (|(D-w)^* \cap D^*| + |(D-w) \cap D| )- (|(D-w)^* \cap D| + |(D-w) \cap D^*|) \\
&=& ((n-2k+d_D(w)) +d_D(w)) - ((k-d_D(w))+(k-d_D(w))) \\
&=& n-4(k-d_D(w))
\end{eqnarray*}

\end{proof}

Hence, $\{s(t)\}$ has two-level autocorrelation if and only if $d_D(w)$ is two valued. That is to say, $D$ is a difference set in $\mathbb{Z}_n$. Unfortunately, in most cases $\mathbb{Z}_n$ does not contain a difference set. Thus, we are led to search for sequences with three-level correlation. By the lemma above, these are precisely the sequences for which $d_D(w)$ is three valued. Our final theorem for this section serves as a bridge between cyclic almost difference sets and some binary sequences with three-level autocorrelation.

\begin{thm}
Let $\{s(t)\}$ be a binary sequence with period $n$ and let $D$ be its support with $|D| = k$.  Then $D$ is an $(n, k, \lambda, t)$-almost difference set in $\mathbb{Z}_n$ if and only if the autocorrelation function $C_s(w)$ takes on the value $n-4(k-\lambda)$ altogether $t$ times and $n-4(k-\lambda-1)$ altogether $n-1-t$ times as $w$ ranges over all the nonzero elements of $\mathbb{Z}_n$.
\end{thm}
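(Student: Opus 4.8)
The plan is to apply Lemma~\ref{lem2} directly and translate the defining condition of an almost difference set through the identity $C_s(w) = n - 4(k - d_D(w))$. The key observation is that this identity sets up a bijection between the possible values of $d_D(w)$ and the possible values of $C_s(w)$: the map $x \mapsto n - 4(k - x)$ is injective (indeed strictly increasing in $x$), so $d_D(w)$ takes a particular value $\lambda$ if and only if $C_s(w)$ takes the corresponding value $n - 4(k - \lambda)$, and likewise $d_D(w) = \lambda + 1$ if and only if $C_s(w) = n - 4(k - \lambda - 1)$.

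First I would recall that by definition $D$ is an $(n, k, \lambda, t)$-almost difference set in $\mathbb{Z}_n$ precisely when the difference function $d_D(x) = |(D+x) \cap D|$ attains the value $\lambda$ for exactly $t$ of the nonzero $x \in \mathbb{Z}_n$ and the value $\lambda + 1$ for the remaining $n - 1 - t$ nonzero values. Then I would invoke Lemma~\ref{lem2} to substitute: as $w$ ranges over $\mathbb{Z}_n \setminus \{0\}$, we have $C_s(w) = n - 4(k - d_D(w))$. Since the assignment $d_D(w) \mapsto n - 4(k - d_D(w))$ is a bijection onto its image, counting the shifts $w$ for which $d_D(w) = \lambda$ is the same as counting the shifts for which $C_s(w) = n - 4(k - \lambda)$, and similarly for $\lambda + 1$ and $n - 4(k - \lambda - 1)$. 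Hence the $t$-versus-$(n-1-t)$ split of the difference function values transfers verbatim to a $t$-versus-$(n-1-t)$ split of the autocorrelation values, which is exactly the asserted characterization; the converse direction follows by running the same equivalence backwards, using that $n - 4(k-\lambda)$ and $n - 4(k-\lambda-1)$ are distinct (they differ by $4$) so the two autocorrelation values genuinely correspond to two distinct difference-function values.

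There is essentially no obstacle here: the theorem is a direct corollary of Lemma~\ref{lem2} combined with the definition of an almost difference set, and the only thing to be careful about is noting the strict monotonicity (hence injectivity) of $x \mapsto n - 4(k-x)$ so that the equivalence is genuinely two-sided and the counts are preserved exactly. If one wants to be fully explicit, I would state that $n - 4(k - \lambda) \neq n - 4(k - \lambda - 1)$ and that these are the only two values $C_s(w)$ can take when $d_D$ is $\{\lambda, \lambda+1\}$-valued, so that the three-level (or fewer) autocorrelation structure matches the almost difference set structure term by term.
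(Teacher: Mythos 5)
Your proposal is correct and matches the paper's approach exactly: the paper's proof is the one-line observation that the statement follows from Lemma~\ref{lem2} together with the definition of an almost difference set, which is precisely what you do, only with the injectivity of $x \mapsto n-4(k-x)$ made explicit. That extra care about the counts transferring exactly is a fair elaboration but does not change the argument.
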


\begin{proof}
This follows from Lemma \ref{lem2} and the definition of an almost difference set.
\end{proof}

That is, starting with an almost difference set $D$ in $\mathbb{Z}_n$, the characteristic sequence for $D$ has three-level autocorrelation. On the flip side, given a sequence $\{s(t)\}$ with period $n$ and characteristic set $D$, if $\{s(t)\}$ has three-level autocorrelation and correlation values
$$
C_s(w) =
\begin{cases}
n, & \text{for } w = 0 \\
n-4(k-\lambda), & \text{for } t \text{ nonzero elements in } \mathbb{Z}_n \\
n-4(k-\lambda-1), &\text{for the remaining } n-t-1 \text{ nonzero elements in } \mathbb{Z}_n
\end{cases}
$$
then $D$ in an $(n,k,\lambda, t)$-almost difference set in $\mathbb{Z}_n$.

\section{Difference Sets and Group Rings}

An ADS is characterized by the multiplicities of certain group elements coming as differences taken within a set. Therefore, we start by providing an efficient tool for working with differences within a group.

\indent Let $G$ be an additive abelian group and let $\mathbb{Z}$ denote the ring of integers. Then we define the group ring $\mathbb{Z}[G]$ to be the ring of formal sums

$$
\mathbb{Z}[G] = \left \{\sum_{g \in G}a_g X^{g} \hs | \hs a_g \in \mathbb{Z}\right \}
$$

where $X$ is an indeterminate variable.  The ring $\mathbb{Z}[G]$ has the operation of addition given by
  $$
  \sum_{g \in G}a_gX^g + \sum_{g \in G} b_gX^g = \sum_{g \in G}(a_g+b_g)X^g
  $$

  and the operation of multiplication defined by

  $$
  \left (\sum_{g \in G}a_gX^g \right ) \left (\sum_{g \in G} b_gX^g \right ) = \sum_{h \in G} \left (\sum_{g \in G}a_gb_{h-g}\right )X^h
  $$

The zero and unit of $\mathbb{Z}[G]$ are $\sum_{g \in G}0X^g := \underline{0}$ and $X^0 := \underline{1}$ respectively. If $S$ is a subset of $G$, we will identify $S$ with the group ring element $S(X) = \sum_{s \in S}X^s$. This sum is sometimes referred to as a \textit{simple quantity}.

With this new terminology, we are able to provide a more succinct characterization of an almost difference set.

\begin{prop}\label{prop4}
Let $G$ be an additive abelian group of order $v$ and let $D$ be a $k$-subset of $G$. Then $D$ is an $(v,k,\lambda, t)$-ADS in $G$ if and only if there exists a $t$-subset $S$ of $G$ such that 
$$
D(X)D(X^{-1}) = k\cdot \underline{1} + \lambda S(X) + (\lambda+1)(G(X)-S(X)-\underline{1})
$$
in $\mathbb{Z}[G]$. 
\end{prop}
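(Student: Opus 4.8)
The plan is to expand the product $D(X)D(X^{-1})$ in $\mathbb{Z}[G]$ and read off its coefficients, matching them against the values of the difference function $d_D$. First I would compute, writing the group additively so that $D(X^{-1}) = \sum_{h \in D} X^{-h}$,
$$
D(X)D(X^{-1}) = \Bigl(\sum_{g \in D} X^{g}\Bigr)\Bigl(\sum_{h \in D} X^{-h}\Bigr) = \sum_{g, h \in D} X^{g-h} = \sum_{x \in G} c_x X^{x},
$$
where $c_x = |\{(g,h) \in D \times D : g - h = x\}|$. The key observation is that $c_x = |D \cap (D-x)| = d_D(x)$ for every $x \in G$, using that $|(D+x)\cap D| = |D \cap (D-x)|$ by translation invariance of cardinality; in particular $c_0 = |D| = k$.

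Next I would set $S = \{x \in G \setminus \{0\} : d_D(x) = \lambda\}$ and observe that, by the definition recalled in Section 1, $D$ being a $(v,k,\lambda,t)$-ADS is equivalent to the two conditions that $d_D(x) \in \{\lambda,\lambda+1\}$ for every nonzero $x$ and that $|S| = t$. Granting these, the coefficient $c_x$ equals $k$ when $x = 0$, equals $\lambda$ when $x \in S$, and equals $\lambda+1$ when $x \in G \setminus (S \cup \{0\})$. Collecting the terms of $\sum_{x \in G} c_x X^x$ accordingly gives exactly
$$
D(X)D(X^{-1}) = k\cdot \underline{1} + \lambda S(X) + (\lambda+1)\bigl(G(X) - S(X) - \underline{1}\bigr),
$$
with $S$ a $t$-subset of $G$, which is the claimed identity.

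For the converse I would suppose such an identity holds for some $t$-subset $S \subseteq G$ and compare coefficients of $X^{x}$ on both sides. Comparing the coefficient of $X^{0}$: the left-hand side gives $k$, while the right-hand side gives $k + \lambda\,[0 \in S] - (\lambda+1)\,[0 \in S] = k - [0 \in S]$ (where $[\,\cdot\,]$ is the indicator), which forces $0 \notin S$. Then for each nonzero $x$, comparing coefficients and invoking the identity $c_x = d_D(x)$ from the first step yields $d_D(x) = \lambda$ if $x \in S$ and $d_D(x) = \lambda+1$ otherwise; since $|S| = t$ and $0 \notin S$, this says precisely that $d_D$ takes the value $\lambda$ exactly $t$ times and the value $\lambda+1$ exactly $v - 1 - t$ times on $G \setminus \{0\}$, i.e., $D$ is a $(v,k,\lambda,t)$-ADS.

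I do not expect a genuine obstacle here; the argument is essentially bookkeeping once the coefficient identity $c_x = d_D(x)$ is in hand. The one point that needs a little care is the coefficient at the identity element: one must treat $x = 0$ separately both to supply the $k\cdot\underline{1}$ term on the right and, in the converse direction, to deduce that any admissible $S$ necessarily avoids $0$ so that "$t$-subset of $G$" in the statement really does record the shifts whose $d_D$-value is $\lambda$.
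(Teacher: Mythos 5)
Your argument is correct: expanding $D(X)D(X^{-1})=\sum_{x\in G} d_D(x)X^{x}$ and comparing coefficients against the definition of an ADS is exactly the (standard) argument the paper leaves implicit, as Proposition~\ref{prop4} is stated there without proof. Your extra care at the identity coefficient --- showing in the converse that any admissible $S$ must avoid $0$ --- is a worthwhile detail that the statement itself glosses over.
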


Furthermore, we can also provide a group ring characterization for difference sets, partial difference sets, and divisible difference sets (\cite{BJ}, \cite{Jung}, \cite{Ma94}).

\begin{prop}\label{prop6}
Let $G$ be an additive abelian group of order $v$ and let $D$ be a $k$-subset of $G$. Then $D$ is a $(v,k,\lambda)$-difference set in $G$ if and only if in $\mathbb{Z}[G]$,
$$
D(X)D(X^{-1}) = k\cdot \underline{1} +  \lambda(G(X)-\underline{1}).
$$
\end{prop}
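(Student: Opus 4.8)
The plan is to prove both directions at once by comparing coefficients of the two elements of $\mathbb{Z}[G]$. First I would expand the left-hand side using the multiplication rule: since $D(X^{-1}) = \sum_{h \in D} X^{-h}$,
$$
D(X)D(X^{-1}) = \sum_{g \in D}\sum_{h \in D} X^{\,g-h} = \sum_{x \in G} N(x)\,X^{\,x}, \qquad N(x) := |\{(g,h) \in D\times D : g-h = x\}|.
$$
The one bookkeeping point is to identify $N(x)$ with the difference function $d_D$: the equation $g - h = x$ forces $h = g - x$, so the pairs counted by $N(x)$ are in bijection with the elements $g \in D$ satisfying $g \in D + x$; hence $N(x) = |(D+x)\cap D| = d_D(x)$ for every $x \in G$. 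In particular $N(0) = |D| = k$.

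Next I would read off the right-hand side $k\cdot\underline{1} + \lambda\bigl(G(X) - \underline{1}\bigr)$: its $X^0$-coefficient is $k + \lambda(1-1) = k$, and its $X^x$-coefficient for $x \neq 0$ is $\lambda(1 - 0) = \lambda$. Since two elements of $\mathbb{Z}[G]$ coincide exactly when all their coefficients agree, the displayed identity is equivalent to the system $N(0) = k$ together with $N(x) = \lambda$ for all $x \in G\setminus\{0\}$. The first equation holds automatically because $|D| = k$, and, by the previous paragraph, the remaining equations say precisely that every nonzero element of $G$ has exactly $\lambda$ representations as a difference of two elements of $D$ — that is, that $D$ is a $(v,k,\lambda)$-difference set. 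This proves the equivalence.

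I do not expect any genuine obstacle; the argument is a direct coefficient comparison, and the only step requiring a moment's care is the identification $N(x) = d_D(x)$ (counting ordered difference pairs equals counting the overlap of $D$ with its translate). Alternatively, one can deduce the proposition from Proposition \ref{prop4} by taking $t = v-1$: then the $t$-subset $S$ must be all of $G \setminus \{0\}$, so $S(X) = G(X) - \underline{1}$, the $(\lambda+1)$-term collapses to $\underline{0}$, and the ADS identity reduces to exactly the stated difference-set identity. I would probably include the self-contained computation, since it is no longer and does not rely on interpreting the degenerate case $t = v-1$ of the ADS definition.
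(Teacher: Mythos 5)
Your argument is correct: expanding $D(X)D(X^{-1})$ coefficientwise and identifying the coefficient of $X^x$ with $d_D(x)=|(D+x)\cap D|$ is exactly the standard verification, and the paper itself states this proposition without proof (citing \cite{BJ}, \cite{Jung}, \cite{Ma94}), so your coefficient comparison supplies precisely the omitted routine argument. Your alternative remark, deducing it from Proposition \ref{prop4} with $t=v-1$, also works formally, though it leans on a degenerate case the paper explicitly excludes ($0<t<v-1$), so your choice to present the self-contained computation is the right one.
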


\begin{prop}\label{prop5}
Let $G$ be an additive abelian group of order $v$ and let $D$ be a $k$-subset of $G$. Then $D$ is a $(v,k,\lambda, \mu)$-partial difference set in $G$ if and only if in $\mathbb{Z}[G]$,
$$
D(X)D(X^{-1}) =
\begin{cases}
k\cdot \underline{1} + \lambda (D(X)-\underline{1}) + \mu(G(X)-D(X)), & \text{if } 1 \in D \\
k\cdot \underline{1} + \lambda D(X) + \mu(G(X)-D(X)-\underline{1}), & \text{if } 1 \notin D
\end{cases}
$$
\end{prop}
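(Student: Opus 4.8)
The plan is to argue exactly parallel to the difference-set case (Proposition \ref{prop6}), working directly from the difference-function form of the definition: a $k$-subset $D \subset G$ is a $(v,k,\lambda,\mu)$-partial difference set precisely when $d_D(g) = |(D+g)\cap D|$ equals $\lambda$ for every nonzero $g \in D$ and equals $\mu$ for every nonzero $g \in G \setminus D$. First I would record the basic group ring identity
$$
D(X)D(X^{-1}) = \sum_{d_1, d_2 \in D} X^{d_1 - d_2} = \sum_{g \in G} d_D(g)\, X^g ,
$$
where the coefficient of $X^g$ is the number of ordered pairs $(d_1,d_2) \in D \times D$ with $d_1 - d_2 = g$, which is $|(D+g)\cap D| = d_D(g)$; in particular the coefficient of $\underline{1} = X^0$ is $d_D(0) = |D| = k$.

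For the forward direction I would substitute the partial-difference-set values of $d_D(g)$ into this sum and collect the terms with $g \in D\setminus\{0\}$, with $g \in (G\setminus D)\setminus\{0\}$, and $g = 0$ separately. The only point demanding care is whether the identity element lies in $D$. If $1 \in D$, then the nonzero elements of $D$ form $D \setminus \{1\}$, whose simple quantity is $D(X) - \underline{1}$, while the nonzero elements outside $D$ form all of $G \setminus D$, with simple quantity $G(X) - D(X)$; assembling the three blocks gives $k\cdot\underline{1} + \lambda(D(X)-\underline{1}) + \mu(G(X)-D(X))$. If $1 \notin D$, the nonzero elements of $D$ form all of $D$, with simple quantity $D(X)$, while the nonzero elements outside $D$ form $(G\setminus D)\setminus\{1\}$, with simple quantity $G(X)-D(X)-\underline{1}$, yielding the second displayed formula. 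This reproduces the claimed dichotomy.

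Conversely, assuming one of the two identities holds in $\mathbb{Z}[G]$, I would compare coefficients of $X^g$ on both sides, using $\sum_{g} d_D(g)X^g$ for the left-hand side. Reading off the right-hand side coefficient of $X^g$ — again splitting according to $1 \in D$ or $1 \notin D$, and according to whether $g = 0$, $g \in D\setminus\{0\}$, or $g \in (G\setminus D)\setminus\{0\}$ — recovers $d_D(g) = \lambda$ for nonzero $g \in D$ and $d_D(g) = \mu$ for nonzero $g \notin D$, which is exactly the definition of a partial difference set; the $X^0$-coefficient equation just reads $k = k$ and carries no information since $D$ was already assumed to have $k$ elements.

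There is no genuine obstacle here; the argument is bookkeeping, and the one thing to be vigilant about is the handling of the identity element and the accompanying $\underline{1}$ terms, which is precisely why the statement bifurcates into the cases $1 \in D$ and $1 \notin D$. As a side remark one may note that $D(X)D(X^{-1})$ is invariant under $X \mapsto X^{-1}$, so the right-hand side is automatically symmetric; when $\lambda \neq \mu$ this forces $D(X) = D(X^{-1})$, i.e.\ $D = -D$ — a byproduct of the identity that is not needed for the proof itself.
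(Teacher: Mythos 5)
Your proof is correct: the identity $D(X)D(X^{-1}) = \sum_{g \in G} d_D(g)X^g$ plus careful bookkeeping of the identity element in the two cases $1 \in D$ and $1 \notin D$ is exactly the standard argument, and your coefficient comparison for the converse is sound. The paper itself states this proposition without proof (deferring to the cited references), and your write-up supplies precisely the routine verification those sources use, so there is nothing to flag.
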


\begin{prop}
Let $G$ be an additive abelian group of order $v$, $H$ a subgroup of $G$ of order $m$, and $D$ a $k$-subset of $G$. Then $D$ is a $(v, m, k, \lambda_1, \lambda_2)$-divisible difference set in $G$ relative to $H$ if and only if in $\mathbb{Z}[G]$,
$$
D(X)D(X^{-1}) = k \cdot \underline{1} + \lambda_1 (H(X)-\underline{1}) + \lambda_2(G(X)-H(X)).
$$
\end{prop}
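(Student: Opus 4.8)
The plan is to establish the equivalence by computing the coefficients of $D(X)D(X^{-1})$ directly and matching them against the definition of a divisible difference set. First I would expand the product in $\mathbb{Z}[G]$ using the multiplication rule: writing $D(X)D(X^{-1}) = \sum_{h \in G} c_h X^h$, the coefficient $c_h$ counts the number of ordered pairs $(g_1, g_2) \in D \times D$ with $g_1 - g_2 = h$, which is precisely the difference function value $d_D(h)$ together with the observation that $c_0 = |D| = k$ (since $g_1 - g_2 = 0$ forces $g_1 = g_2$). So as group ring elements,
$$
D(X)D(X^{-1}) = k \cdot \underline{1} + \sum_{h \in G \setminus \{0\}} d_D(h) X^h.
$$

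Next I would unpack the right-hand side of the claimed identity. The element $H(X) - \underline{1}$ is the simple quantity $\sum_{h \in H \setminus \{0\}} X^h$, and $G(X) - H(X) = \sum_{h \in G \setminus H} X^h$; these two simple quantities have disjoint supports whose union is $G \setminus \{0\}$. Hence
$$
k \cdot \underline{1} + \lambda_1(H(X) - \underline{1}) + \lambda_2(G(X) - H(X)) = k \cdot \underline{1} + \sum_{h \in H \setminus \{0\}} \lambda_1 X^h + \sum_{h \in G \setminus H} \lambda_2 X^h.
$$
Two elements of $\mathbb{Z}[G]$ are equal if and only if all their coefficients agree. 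Comparing the two expressions, the constant term $k$ matches automatically (this is where $|D| = k$ is used), and for $h \neq 0$ the identity holds if and only if $d_D(h) = \lambda_1$ for every nonzero $h \in H$ and $d_D(h) = \lambda_2$ for every $h \in G \setminus H$ — which is exactly the definition of a $(v, m, k, \lambda_1, \lambda_2)$-divisible difference set relative to $H$. Running this coefficient comparison in both directions gives the "if and only if."

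This argument is essentially bookkeeping, so there is no serious obstacle; the only point requiring minor care is the handling of the identity element, namely observing that the $-\underline{1}$ in $H(X) - \underline{1}$ is exactly what removes $0$ from the support of $H(X)$ so that the supports of $H(X) - \underline{1}$ and $G(X) - H(X)$ partition $G \setminus \{0\}$, and checking that the coefficient of $X^0$ on the left is $k$ rather than something involving $\lambda_1$. I would also remark that this proposition is the natural common generalization of Propositions \ref{prop4}, \ref{prop6}, and \ref{prop5}, and that its proof is a verbatim adaptation of the proof of Proposition \ref{prop4} with the single pair $(S, G \setminus S \setminus \{0\})$ replaced by the pair $(H \setminus \{0\}, G \setminus H)$.
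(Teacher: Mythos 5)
Your proposal is correct: the coefficient of $X^h$ in $D(X)D(X^{-1})$ is exactly the number of ordered pairs in $D\times D$ with difference $h$, i.e.\ $k$ at $h=0$ and $d_D(h)$ otherwise, and comparing this with the coefficients $\lambda_1$ on $H\setminus\{0\}$ and $\lambda_2$ on $G\setminus H$ of the right-hand side gives precisely the defining condition of a $(v,m,k,\lambda_1,\lambda_2)$-divisible difference set. The paper states this proposition without proof (citing the references for the group ring characterizations), and your coefficient-comparison bookkeeping is exactly the standard verification implicit there, in line with the analogous characterizations in Propositions \ref{prop4}--\ref{prop5}.
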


\section{Some Necessary Conditions for the Existence of Almost Difference Sets} \label{nec}

Similar to difference sets, there are many parameter sets for which no ADS exists. In this section, we provide some necessary conditions for the existence of almost difference sets in $\mathbb{Z}_v$.

Our first necessary condition comes from counting the nonzero differences, $(d_i-d_j \hs | \hs d_i, d_j \in D, \hs d_i \neq d_j)$, of an ADS $D$ in two ways.

\begin{thm}\label{thm13}
Let $G$ be a group of order $v$. For a $(v, k, \lambda, t)$-ADS $D \subset G$, we have the following necessary condition:
$$
k(k-1) = \lambda t + (\lambda+1)(v-1-t).
$$
\end{thm}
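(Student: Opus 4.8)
The plan is to count the multiset $\mathcal{D}' = (d_i - d_j \mid d_i, d_j \in D, \ d_i \neq d_j)$ of \emph{nonzero} differences in two different ways. The total number of ordered pairs $(d_i, d_j)$ with $d_i, d_j \in D$ is $k^2$, of which exactly $k$ have $d_i = d_j$; hence $|\mathcal{D}'| = k(k-1)$. This gives the left-hand side.

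For the right-hand side, I would invoke the definition of a $(v,k,\lambda,t)$-ADS directly: the nonidentity elements of $G$ split into $t$ elements that occur as a difference exactly $\lambda$ times and the remaining $v - 1 - t$ elements that occur exactly $\lambda + 1$ times. Summing these multiplicities over all $v-1$ nonidentity elements recovers $|\mathcal{D}'|$ from the other side, namely $\lambda t + (\lambda+1)(v-1-t)$. Equating the two expressions yields the stated identity. Alternatively, one can run the same argument through the difference-function formulation or through Proposition~\ref{prop4}: applying the augmentation homomorphism $\mathbb{Z}[G] \to \mathbb{Z}$ sending $X^g \mapsto 1$ to the identity $D(X)D(X^{-1}) = k \cdot \underline{1} + \lambda S(X) + (\lambda+1)(G(X) - S(X) - \underline{1})$ gives $k^2 = k + \lambda t + (\lambda+1)(v - 1 - t)$, which rearranges to the claim.

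There is essentially no obstacle here; the only thing requiring care is the bookkeeping of which pairs to exclude (the $k$ diagonal pairs $d_i = d_j$ contribute the identity element and must not be counted among the nonzero differences), and making sure the two sides are tallying the same multiset. I would present the double-counting version as the main argument since it is the most transparent and matches the phrasing ``counting the nonzero differences in two ways'' in the theorem's lead-in.
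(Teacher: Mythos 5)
Your proposal is correct and follows the same route the paper intends: the theorem's lead-in explicitly frames it as counting the nonzero differences $(d_i-d_j \mid d_i, d_j \in D,\ d_i \neq d_j)$ in two ways, which is precisely your main argument ($k(k-1)$ ordered pairs with distinct entries versus $\lambda t + (\lambda+1)(v-1-t)$ total multiplicity over the nonidentity elements). The group-ring alternative via the augmentation map is a fine remark but not needed; the double-counting version matches the paper.
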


\noindent Analogous to difference sets, the next theorem allows us to restrict our attention to sets $D$ with $|D| \leq \frac{v}{2}$.

\begin{thm} \cite{AD} \label{comp} $D$ is a $(v, k, \lambda, t)$-ADS in an abelian group $(G, +)$ if and only if its complement $D^* = G \setminus D$ is a $(v, v-k, v-2k+\lambda, t)$-ADS.
\end{thm}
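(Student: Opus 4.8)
The plan is to use the group ring characterization from Proposition \ref{prop4} and translate the statement about $D$ into a statement about $D^*$. Write $N := v = |G|$ for brevity and recall that $D^*(X) = G(X) - D(X)$, that $G(X)D(X^{-1}) = k\cdot G(X)$ (since each $g\in G$ appears exactly $k$ times among the products $g\,h^{-1}$ with $h\in D$), and similarly $G(X)G(X^{-1}) = v\cdot G(X)$. Note also that the roles of $D$ and $D^*$ are symmetric, so it suffices to prove one implication: if $D$ is a $(v,k,\lambda,t)$-ADS then $D^*$ is a $(v,\,v-k,\,v-2k+\lambda,\,t)$-ADS, since applying this with $D$ replaced by $D^*$ (and noting $(D^*)^* = D$) gives the converse after checking the parameters match — indeed $v-(v-k)=k$ and $v-2(v-k)+(v-2k+\lambda)=\lambda$, so the parameters are involutive as expected.

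First I would assume $D$ is a $(v,k,\lambda,t)$-ADS, so by Proposition \ref{prop4} there is a $t$-subset $S\subseteq G$ with
$$
D(X)D(X^{-1}) = k\cdot\underline{1} + \lambda S(X) + (\lambda+1)\bigl(G(X)-S(X)-\underline{1}\bigr).
$$
Then I would compute $D^*(X)D^*(X^{-1}) = \bigl(G(X)-D(X)\bigr)\bigl(G(X^{-1})-D(X^{-1})\bigr)$ by expanding into four terms: $G(X)G(X^{-1})$, $-G(X)D(X^{-1})$, $-D(X)G(X^{-1})$, and $D(X)D(X^{-1})$. Using the identities above, the first three terms collapse to $v\cdot G(X) - k\cdot G(X) - k\cdot G(X) = (v-2k)G(X)$, and I would substitute the displayed expression for the fourth term. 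The goal is then purely algebraic: simplify the sum into the form $k'\cdot\underline{1} + \lambda' S(X) + (\lambda'+1)\bigl(G(X)-S(X)-\underline{1}\bigr)$ with the \emph{same} set $S$, $k' = v-k$, and $\lambda' = v-2k+\lambda$, and then invoke Proposition \ref{prop4} in the reverse direction to conclude $D^*$ is a $(v,v-k,v-2k+\lambda,t)$-ADS. Keeping track of the coefficient of $\underline{1}$ separately (it should come out to $v-k$) and the coefficient of $S(X)$ versus $G(X)$ is the only thing requiring care.

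The main obstacle is bookkeeping rather than any genuine difficulty: one must correctly resolve the $\lambda$ versus $\lambda+1$ splitting so that the "$t$ elements with the lower multiplicity" for $D$ remain exactly the "$t$ elements with the lower multiplicity" for $D^*$ — i.e. the same witnessing set $S$ works, which is what makes the parameter $t$ invariant under complementation. Concretely, after expansion the coefficient of a fixed nonidentity $g\in G$ in $D^*(X)D^*(X^{-1})$ is $(v-2k) + d_D(g)$ where $d_D(g)\in\{\lambda,\lambda+1\}$; this equals $v-2k+\lambda$ exactly when $d_D(g)=\lambda$, which happens for precisely $t$ values of $g$, and equals $v-2k+\lambda+1$ otherwise. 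One should also double-check the count $k(k-1)$ identity from Theorem \ref{thm13} is consistent with the new parameters, i.e. that $(v-k)(v-k-1) = (v-2k+\lambda)t + (v-2k+\lambda+1)(v-1-t)$, as a sanity check that no arithmetic slip has occurred. Alternatively, and perhaps more transparently, one can avoid the group ring entirely and argue directly with the difference function: for any $x\in G\setminus\{0\}$ one has $d_{D^*}(x) = |(D^*+x)\cap D^*| = v - 2k + |(D+x)\cap D| = v-2k+d_D(x)$ by inclusion–exclusion on complements, and the result is immediate.
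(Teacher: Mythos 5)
Your proposal is correct. The paper states Theorem \ref{comp} without proof (it only cites \cite{AD}), so there is no in-paper argument to compare against; both of your routes work, and the closing one-line observation $d_{D^*}(x) = v-2k+d_D(x)$ for $x\neq 0$, obtained by inclusion--exclusion, is the standard argument and already suffices, with the group-ring expansion $D^*(X)D^*(X^{-1}) = (v-2k)G(X) + D(X)D(X^{-1})$ being an equivalent reformulation via Proposition \ref{prop4}. Your handling of the converse by applying the forward direction to $D^*$ and checking that the parameter map is involutive is also correct.
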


\noindent In the case where $G = \mathbb{Z}_v$, we also have the following property.

\begin{thm} \cite{AD} If $D$ is a $(v, k, \lambda, t)$-ADS in $\mathbb{Z}_v$, then for any $a \in \mathbb{Z}_v$ with $gcd(a,v) = 1$ and any $b \in \mathbb{Z}_v$, $aD+b$ is a $(v,k, \lambda, t)$-ADS as well.
\end{thm}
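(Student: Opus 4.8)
The plan is to work with the difference-function definition of an almost difference set and to show that the affine map $\varphi : \mathbb{Z}_v \to \mathbb{Z}_v$ given by $\varphi(x) = ax + b$ carries $D$ to a set with the same difference spectrum. First I would note that since $\gcd(a,v) = 1$, the element $a$ is a unit in $\mathbb{Z}_v$, so $\varphi$ is a bijection; in particular $|aD + b| = |D| = k$, so the size parameter is preserved and $aD+b$ is at least a candidate $k$-subset.

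Next I would compute $d_{aD+b}(x) = |((aD+b)+x) \cap (aD+b)|$ for a fixed nonzero $x$. Translation by $b$ is a bijection of $\mathbb{Z}_v$ and commutes with intersection, so this equals $|(aD+x) \cap aD|$. Writing $x = a\cdot(a^{-1}x)$, where $a^{-1}$ denotes the inverse of $a$ modulo $v$, and using that $g \mapsto ag$ is a bijection of $\mathbb{Z}_v$, I get $|(aD+x) \cap aD| = |a\big((D + a^{-1}x) \cap D\big)| = |(D+a^{-1}x)\cap D| = d_D(a^{-1}x)$.

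Finally, as $x$ runs over $\mathbb{Z}_v \setminus \{0\}$, so does $a^{-1}x$ (again because $a$ is a unit), so the multiset $\{\, d_{aD+b}(x) : x \in \mathbb{Z}_v \setminus\{0\}\,\}$ coincides with the multiset $\{\, d_D(y) : y \in \mathbb{Z}_v\setminus\{0\}\,\}$. Since $D$ is a $(v,k,\lambda,t)$-ADS, the latter takes the value $\lambda$ exactly $t$ times and $\lambda+1$ exactly $v-1-t$ times; hence so does the former, and $aD+b$ is a $(v,k,\lambda,t)$-ADS.

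There is no serious obstacle here: the argument is a routine bijection-and-counting exercise. The only point needing care is that the translation by $b$ is handled by translation invariance of the difference function (valid in any abelian group), whereas the scaling by $a$ genuinely uses the coprimality hypothesis so that multiplication by $a$ permutes $\mathbb{Z}_v$; without $\gcd(a,v)=1$ the map is not injective and the conclusion fails. An equally short alternative is to apply Proposition \ref{prop4}: the ring automorphism of $\mathbb{Z}[\mathbb{Z}_v]$ induced by $g \mapsto ag$ sends $D(X)D(X^{-1})$, $G(X)$, $\underline{1}$, and $S(X)$ to the corresponding quantities for $aD$ (with $S$ replaced by $aS$), and multiplying through by $X^b X^{-b} = \underline{1}$ absorbs the translation, reproducing the defining identity with the same $k$, $\lambda$, and $t$.
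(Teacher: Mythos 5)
Your proof is correct: reducing $d_{aD+b}(x)$ to $d_D(a^{-1}x)$ via translation invariance and the fact that multiplication by the unit $a$ permutes $\mathbb{Z}_v$ is exactly the standard argument, and it preserves the multiset of difference-function values, hence all four parameters. The paper itself states this theorem without proof (citing Arasu et al.), so there is nothing to contrast with; your group-ring remark via Proposition \ref{prop4} is an equally valid alternative phrasing of the same bijection.
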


From henceforth we restrict our attention to almost difference sets in the group $\mathbb{Z}_v$. The results we summarize come from a paper titled ``A New Family of Almost Difference Sets and Some Necessary Conditions" by Yuan Zhang, Jian Guo Lei, and Shao Pu Zhang.

For their first set of necessary conditions, we introduce some notation which generalizes the group ring concept presented earlier. For a set $D = \{d_1, d_2, ..., d_k\}$, the polynomial
$$
D(X) = X^{d_1}+X^{d_2}+ \cdots +X^{d_k}
$$
\noindent is called the \textit{Hall polynomial}.

\noindent With this notation, we have the following characterization:
\begin{thm}
$D$ is a $(v, k, \lambda, t)$-ADS in $G = \mathbb{Z}_v$ if and only if, there exists a $t$-subset $S$ of $\mathbb{Z}_v$ such that
$$
D(X)D(X^{-1}) \equiv k\cdot X^{0} +(\lambda + 1)(G(X)-1) - S(X) (\text{mod } X^v-1)
$$
where $H(X) (\text{mod } X^v-1)$ is obtained by identifying $X^v$ with $1$ in $H(X)$.
\end{thm}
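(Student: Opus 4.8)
The plan is to derive this group-ring identity directly from the group-ring characterization of an almost difference set already proved in Proposition \ref{prop4}, specializing to the cyclic case $G = \mathbb{Z}_v$ and translating the statement from $\mathbb{Z}[G]$ into the polynomial ring $\mathbb{Z}[X]/(X^v - 1)$. The key observation is that for $G = \mathbb{Z}_v$ the group ring $\mathbb{Z}[\mathbb{Z}_v]$ is isomorphic to $\mathbb{Z}[X]/(X^v-1)$ via $X^g \mapsto X^{g \bmod v}$, and under this isomorphism the simple quantity $D(X) = \sum_{d \in D} X^d$ (the Hall polynomial) corresponds exactly to the group-ring element $D(X)$, while $X^{-1}$ corresponds to $X^{v-1}$, i.e. $D(X^{-1}) = D(X^{v-1}) \bmod (X^v-1)$. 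So the whole statement is really Proposition \ref{prop4} read through this isomorphism, once one checks that the right-hand side matches.

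The steps I would carry out: first, invoke Proposition \ref{prop4}: $D$ is a $(v,k,\lambda,t)$-ADS in $\mathbb{Z}_v$ if and only if there is a $t$-subset $S$ with
\begin{equation*}
D(X)D(X^{-1}) = k\cdot\underline{1} + \lambda S(X) + (\lambda+1)\bigl(G(X) - S(X) - \underline{1}\bigr)
\end{equation*}
in $\mathbb{Z}[\mathbb{Z}_v]$. Second, simplify the right-hand side algebraically: collecting the $S(X)$ terms gives coefficient $\lambda - (\lambda+1) = -1$, so the expression becomes $k\cdot\underline{1} + (\lambda+1)(G(X) - \underline{1}) - S(X)$. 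Third, transport this identity along the isomorphism $\mathbb{Z}[\mathbb{Z}_v] \cong \mathbb{Z}[X]/(X^v-1)$, noting $\underline{1} \mapsto X^0 = 1$ and $D(X^{-1}) \mapsto D(X^{v-1})$, which is precisely the ``$D(X)D(X^{-1}) \pmod{X^v-1}$'' appearing in the statement; this yields the claimed congruence modulo $X^v-1$. Fourth, for the converse direction, observe that every congruence modulo $X^v-1$ with nonnegative integer coefficients on both sides (which is automatic here since $S \subseteq \mathbb{Z}_v$ and the left side counts differences) lifts back to an identity in $\mathbb{Z}[\mathbb{Z}_v]$, and a $t$-subset $S$ of $\mathbb{Z}_v$ on the polynomial side is literally a $t$-subset on the group side, so Proposition \ref{prop4} applies in reverse.

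Honestly, there is no serious obstacle here: the statement is essentially a restatement of Proposition \ref{prop4} in different notation, and the only thing requiring a sentence of care is making the dictionary between $\mathbb{Z}[\mathbb{Z}_v]$ and $\mathbb{Z}[X]/(X^v-1)$ explicit — in particular that $X^{-1}$ in the group ring is the same as $X^{v-1}$ after reduction, so that $D(X)D(X^{-1})$ computed in the group ring agrees with $D(X)D(X^{-1})$ computed as Hall polynomials and then reduced mod $X^v - 1$. The one genuinely substantive point, if one wanted to be fully rigorous, is checking that the reduction map does not collapse distinct group-ring elements in a way that would break the ``if'' direction: since both sides of the congruence arise as genuine difference-counting functions with coefficients in $\{0,1,\dots\}$ that are already bounded by $v$ in magnitude, the representatives of degree less than $v$ are unique, so the congruence and the group-ring identity are equivalent. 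With that remark in place the proof is a two-line computation plus the isomorphism, and I would present it as such.
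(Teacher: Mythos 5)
Your proposal is correct and takes essentially the same route as the paper, which simply observes that the statement is Proposition \ref{prop4} specialized to $G = \mathbb{Z}_v$ and read through the isomorphism $\mathbb{Z}[\mathbb{Z}_v] \cong \mathbb{Z}[X]/(X^v-1)$; your algebraic simplification $\lambda S(X) + (\lambda+1)(G(X)-S(X)-\underline{1}) = (\lambda+1)(G(X)-\underline{1}) - S(X)$ is exactly the rewriting needed. The only remark is that your worry about the reduction map ``collapsing'' elements is moot, since the correspondence is a genuine ring isomorphism, so no coefficient-size argument is required.
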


\begin{proof}
This is equivalent to the group ring characterization for $G = \mathbb{Z}_v$ in Proposition \ref{prop4}.
\end{proof}

The next set of conditions come from using the above theorem to compute $D(X)D(X^{-1})$ in two ways.

\begin{thm}\cite{ZLZ}
Suppose that $D = \{d_1, d_2, ..., d_k\}$ is a $(v, k, \lambda, t)$-ADS in $\mathbb{Z}_v$. For each $w \vert v$, let
$$
c_0 + c_1X+c_2X^2 + \cdots c_{w-1}X^{w-1} \equiv S(X) (\text{mod }X^{w}-1)
$$
where $S$ is the $t$-set of elements which appear $\lambda$ times as differences of $D$.
Then,
$$
c_0+c_1+ \cdots + c_{w-1} = t
$$
and the following set of equations:
$$
\begin{cases}
\sum_{i=0}^{w-1} b_i = k \\
\sum_{i = 0}^{w-1}b_i^2 = k-(\lambda+1)+(\lambda+1)\frac{v}{w}-c_0 \\
\sum_{i = 0}^{w-1} b_ib_{i-j} = (\lambda+1)\frac{v}{w} - c_j \hs 1 \leq j \leq w-1 \hs \hs (\text{where the subscripts of }b_{i-j} \text{ are modulo }w)
\end{cases}
$$
have a nonnegative integer solution set $(b_0, b_1, ..., b_{w-1})$.
\end{thm}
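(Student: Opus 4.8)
The plan is to exploit the Hall polynomial identity from the preceding theorem and push its reduction one step further, from modulus $X^v-1$ down to modulus $X^w-1$; this is legitimate precisely because $w\mid v$ forces $X^w-1 \mid X^v-1$. First I would exhibit the witness solution: for $0\le i\le w-1$ set $b_i := |\{j : d_j \equiv i \pmod{w}\}|$, the number of elements of $D$ lying in the residue class $i$ modulo $w$. These are manifestly nonnegative integers, and since the residue classes partition $D$ we get $\sum_{i=0}^{w-1} b_i = k$ at once. The substance of the proof is that this explicit choice satisfies the remaining equations.

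Next I would compute $D(X)D(X^{-1}) \pmod{X^w-1}$ in two ways. On one hand, reducing exponents modulo $w$ gives $D(X) \equiv \sum_{i=0}^{w-1} b_i X^i$ and $D(X^{-1}) \equiv \sum_{i=0}^{w-1} b_i X^{w-i} \pmod{X^w-1}$, hence $D(X)D(X^{-1}) \equiv \sum_{i,j} b_i b_j X^{i-j} \pmod{X^w-1}$; collecting the coefficient of $X^m$ for each $0\le m\le w-1$ yields the cyclic autocorrelation $\sum_i b_i b_{i-m}$ (subscripts modulo $w$), which is $\sum_i b_i^2$ when $m=0$. On the other hand, I would reduce the right-hand side $k\cdot X^0 + (\lambda+1)(G(X)-1) - S(X)$ modulo $X^w-1$: here $G(X)=\sum_{g\in\mathbb{Z}_v} X^g \equiv \frac{v}{w}\sum_{i=0}^{w-1} X^i$ because each of the $w$ residue classes modulo $w$ contains exactly $v/w$ elements of $\mathbb{Z}_v$, while $S(X)\equiv \sum_{i=0}^{w-1} c_i X^i$ by the definition of the $c_i$. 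Thus the $X^0$-coefficient of the right-hand side is $k-(\lambda+1)+(\lambda+1)\frac{v}{w}-c_0$ and the $X^j$-coefficient is $(\lambda+1)\frac{v}{w}-c_j$ for $1\le j\le w-1$. Equating the two expressions coefficientwise produces exactly the displayed system, solved by $(b_0,\dots,b_{w-1})$.

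Finally, for $c_0+c_1+\cdots+c_{w-1}=t$ I would just evaluate at $X=1$: writing $S(X)=q(X)(X^w-1)+\sum_{i=0}^{w-1} c_i X^i$ and substituting $X=1$ annihilates the $q(X)(X^w-1)$ term, leaving $\sum_i c_i = S(1)=|S|=t$. There is no genuine obstacle here; the statement is a bookkeeping consequence of reducing a known polynomial identity modulo a divisor, and the only points demanding care are (i) verifying that $G(X)$ reduces to $\frac{v}{w}\sum_{i=0}^{w-1}X^i$, i.e.\ the uniform distribution of $\mathbb{Z}_v$ across residue classes mod $w$, and (ii) the index arithmetic in passing from $D(X^{-1})$ to $\sum_i b_i X^{w-i}$ and in recognizing the product's coefficients as the cyclic autocorrelation sums $\sum_i b_i b_{i-j}$ with subscripts read modulo $w$.
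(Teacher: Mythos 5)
Your proposal is correct, and it follows the same route the paper indicates: it reduces the Hall polynomial identity $D(X)D(X^{-1}) \equiv k\cdot X^0 + (\lambda+1)(G(X)-1) - S(X) \pmod{X^v-1}$ modulo $X^w-1$ (valid since $w \mid v$) and computes $D(X)D(X^{-1})$ a second way via the residue-class counts $b_i$, equating coefficients to obtain the system, with $\sum_i c_i = t$ from evaluation at $X=1$. The explicit witness $b_i = |\{j : d_j \equiv i \pmod w\}|$ and the verification that $G(X) \equiv \frac{v}{w}\sum_{i=0}^{w-1}X^i$ are exactly the bookkeeping the paper's sketch ("compute $D(X)D(X^{-1})$ in two ways") leaves implicit.
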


\begin{exa} {\rm In $\mathbb{Z}_{80}$, no $(80, 13, 1, 2)$-ADS exists.
\begin{proof}
For $w = 2$, it is easy to check that the equations
$$
\begin{cases}
 b_0+b_1 = 13 \\
 b_0^2+b_1^2 = 13-2+2\cdot 40-c_0 \\
 b_0b_1+b_1b_0 = 2 \cdot 40 - c_1
\end{cases}
$$
have no nonnegative integer solution set. Thus, by the previous theorem, no $(80, 13, 1, 2)$-ADS exists in $\mathbb{Z}_{80}$.
\end{proof}}
\end{exa}

\noindent As specific applications of the theorem above, we have the following two corollaries.

\begin{cor} \label{cor14} \cite{ZLZ} When $k$ is odd,
$\vspace{.5mm}$

\begin{enumerate}
\item No $(v, k, \lambda, 1)$-ADS exists in $\mathbb{Z}_v$ if $v \equiv 4 (\text{mod }8)$.
$\vspace{.5mm}$
\item No $(v, k, \lambda, 1)$-ADS exists in $\mathbb{Z}_v$ if:
\begin{itemize}
\item $v \equiv 2(\text{mod }8)$ and $\lambda \equiv 2(\text{mod }4)$; or
\item $v \equiv 6(\text{mod }8)$ and $\lambda \equiv 0(\text{mod }4)$
\end{itemize}
\end{enumerate}
\end{cor}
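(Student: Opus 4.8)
The plan is to derive each nonexistence statement from the $b_i$-system of the preceding theorem (from \cite{ZLZ}), applied with the single choice $w=2$; this is legitimate because every congruence class appearing in the statement forces $v$ to be even, so $2\mid v$. First I would record a normalization. Since $t=1$, Theorem~\ref{thm13} reads $k(k-1)=\lambda+(\lambda+1)(v-2)$, and reducing this modulo $2$ (using that $k(k-1)$ is even and $v$ is even) yields $\lambda\equiv 0\pmod 2$, so $\lambda+1$ is odd in every case under consideration. Moreover $S$ is a single element of $\mathbb{Z}_v$, so its reduction modulo $2$ adds $1$ to exactly one of $c_0,c_1$; hence $c_0+c_1=t=1$, and in particular $c_1\in\{0,1\}$.

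Next I would isolate the key congruence from the $j=1$ equation of the $w=2$ system, which is
$$
b_0 b_1 + b_1 b_0 = 2 b_0 b_1 = (\lambda+1)\tfrac{v}{2} - c_1.
$$
The first equation of the system gives $b_0+b_1=k$, which is odd, so exactly one of $b_0,b_1$ is even; hence $b_0 b_1$ is even and $2 b_0 b_1\equiv 0\pmod 4$. Consequently a $(v,k,\lambda,1)$-ADS can exist in $\mathbb{Z}_v$ only if $(\lambda+1)\tfrac{v}{2}\equiv c_1\pmod 4$ for some $c_1\in\{0,1\}$.

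Finally I would evaluate $(\lambda+1)\tfrac{v}{2}\bmod 4$ in each listed case, using that $\lambda+1$ is odd. If $v\equiv 4\pmod 8$ then $\tfrac{v}{2}\equiv 2\pmod 4$, so $(\lambda+1)\tfrac{v}{2}\equiv 2\pmod 4$. If $v\equiv 2\pmod 8$ and $\lambda\equiv 2\pmod 4$, then $\tfrac{v}{2}\equiv 1$ and $\lambda+1\equiv 3\pmod 4$, so the product is $\equiv 3\pmod 4$. If $v\equiv 6\pmod 8$ and $\lambda\equiv 0\pmod 4$, then $\tfrac{v}{2}\equiv 3$ and $\lambda+1\equiv 1\pmod 4$, so again the product is $\equiv 3\pmod 4$. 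In each case $(\lambda+1)\tfrac{v}{2}$ is $\equiv 2$ or $3\pmod 4$, which is incompatible with being $\equiv c_1\in\{0,1\}\pmod 4$; this contradiction shows that the claimed ADS cannot exist.

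I expect the only genuine pitfalls to be bookkeeping. One must read the $j=1$ term of the cyclic sum correctly — because the indices run modulo $w=2$ it equals $2b_0 b_1$, not $b_0 b_1$ — and, in the case $v\equiv 4\pmod 8$, one must not skip the step that rules out odd $\lambda$: if $\lambda$ were odd then $\lambda+1$ would be even and $(\lambda+1)\tfrac{v}{2}$ could be $\equiv 0\pmod 4$, so the mod-$2$ consequence of Theorem~\ref{thm13} is doing essential work there.
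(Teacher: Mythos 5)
Your proof is correct and follows exactly the route the paper intends: the corollary is stated as a direct application of the preceding $b_i$-system theorem, and your use of the $w=2$ case (with $2b_0b_1\equiv 0\pmod 4$ forced by $k$ odd, $c_1\in\{0,1\}$ from $t=1$, and $\lambda$ even from Theorem~\ref{thm13}) is the intended argument, consistent with the paper's own $(80,13,1,2)$ worked example. No gaps.
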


\begin{exa}{\rm
From the necessary condition in Theorem \ref{thm13}, if $t = 1$, then $2 \vert v$ and $2 \vert \lambda$.
For $\mathbb{Z}_v$ with $v \leq 200$, we list (up to complementation) the parameter sets with $t = 1$ and $k$ odd which satisfy the necessary condition in Theorem \ref{thm13}. Of these, we frame the ones which are then ruled out by Corollary \ref{cor14}.
\[\begin{array}{ccccc}
(2, 1, 0, 1),& (8, 3, 0, 1),& \boxed{(22, 5, 0, 1)},& (38, 11, 2, 1),& (40, 17, 6, 1),\\
 \boxed{(44, 7, 0, 1)},& \boxed{(50, 19, 6, 1)}, &(74, 9, 0, 1), & (92, 17, 2, 1),& (104, 47, 20, 1),\\
  (112, 11, 0, 1),& (134, 31, 6, 1), & \boxed{(140, 43, 12, 1)}, &(152, 33, 6, 1),& \boxed{(158, 13, 0, 1)},\\
   \boxed{(164, 59, 20, 1)},& \boxed{(170, 23, 2, 1)},& \boxed{(182, 49, 12, 1)},& (194, 85, 36, 1), &(200, 93, 42, 1).\\
\end{array}\]}
\end{exa}

\begin{cor} \label{cor16} \cite{ZLZ} When $k$ is odd,
$\vspace{.5mm}$

\begin{enumerate}
\item No $(v, k, \lambda, v-2)$-ADS exists in $\mathbb{Z}_v$ if $v \equiv 4 (\text{mod }8)$.
$\vspace{.5mm}$
\item No $(v, k, \lambda, v-2)$-ADS exists in $\mathbb{Z}_v$ if:
\begin{itemize}
\item $v \equiv 2(\text{mod }8)$ and $\lambda \equiv 1(\text{mod }4)$; or
\item $v \equiv 6(\text{mod }8)$ and $\lambda \equiv 3(\text{mod }4)$
\end{itemize}
\end{enumerate}
\end{cor}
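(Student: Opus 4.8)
The plan is to use the same method as in the proof of Corollary~\ref{cor14}: specialise, with $w=2$, the theorem that converts the existence of a $(v,k,\lambda,t)$-ADS in $\mathbb{Z}_v$ into the solvability in nonnegative integers of the displayed system in $b_0,\dots,b_{w-1}$, and then extract a numerical contradiction.

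First I would record what Theorem~\ref{thm13} says when $t=v-2$, namely $k(k-1)=\lambda(v-1)+1$. Since $k$ is odd the left side is even, so $\lambda(v-1)$ is odd; hence $v$ is even and $\lambda$ is odd, and in particular $w=2$ is a legitimate divisor of $v$. The structural observation driving everything is that, for $t=v-2$, the difference function $d_D$ takes the value $\lambda+1$ at exactly one nonzero element $s^*$; because $d_D(x)=d_D(-x)$ for every $x$, this $s^*$ must satisfy $s^*=-s^*$, so $s^*=v/2$. Consequently the $t$-set $S$ of elements occurring $\lambda$ times as differences is exactly $\mathbb{Z}_v\setminus\{0,v/2\}$, with no remaining freedom; this is what makes the constants $c_j$ completely determined, just as in Corollary~\ref{cor14}.

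Next I would reduce $S(X)$ modulo $X^2-1$. Writing $v=2m$ and using $G(X)\equiv m(1+X)$, one finds $S(X)\equiv m(1+X)-1-X^{\,m\bmod 2}$, so $(c_0,c_1)=(m-2,\,m)$ when $m$ is even (i.e.\ $v\equiv0\pmod4$) and $(c_0,c_1)=(m-1,\,m-1)$ when $m$ is odd (i.e.\ $v\equiv2\pmod4$); in both cases $c_0+c_1=v-2=t$, as it should. Plugging these into the $w=2$ system leaves $b_0+b_1=k$ together with a prescribed value of $b_0b_1$, so $b_0,b_1$ are the roots of a monic integer quadratic whose discriminant equals $(b_0-b_1)^2$; substituting the relation $k(k-1)=\lambda(v-1)+1$ collapses this discriminant to $k-\lambda+1$ when $4\mid v$ and to $k-\lambda-1$ when $v\equiv2\pmod4$.

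Finally, because $b_0+b_1=k$ is odd, $b_0-b_1$ is odd, so its square is $\equiv1\pmod 8$; this forces $k\equiv\lambda\pmod 8$ when $4\mid v$ and $k\equiv\lambda+2\pmod 8$ when $v\equiv2\pmod4$. Feeding each of these into $k(k-1)\equiv\lambda(v-1)+1\pmod 8$ and using $\lambda^2\equiv1\pmod 8$ (true since $\lambda$ is odd) reduces the relation to a congruence in $\lambda$ alone: for $v\equiv4\pmod8$ it becomes $4\lambda\equiv0\pmod 8$, impossible for odd $\lambda$; for $v\equiv2\pmod8$ it becomes $(\lambda+1)^2\equiv0\pmod 8$, which fails precisely when $\lambda\equiv1\pmod4$; and for $v\equiv6\pmod8$ it becomes $(\lambda-1)^2\equiv0\pmod 8$, which fails precisely when $\lambda\equiv3\pmod4$. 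These are exactly the three excluded cases, so in each of them the system has no solution and no such ADS exists; reassuringly, $v\equiv0\pmod8$ and the complementary residues of $\lambda$ stay consistent, as they must. I expect the only real work to be bookkeeping: carrying out the reduction of $S(X)$ and the discriminant simplification without sign errors, and keeping the modulo-$8$ case split straight; the lone conceptual point, the identity $s^*=v/2$, is short. Indeed, once that identity is noted, the argument is essentially the proof of Corollary~\ref{cor14} with $t=1$ replaced by $t=v-2$.
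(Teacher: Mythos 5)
Your proposal is correct and follows exactly the route the paper intends: Corollary~\ref{cor16} is stated as a ``specific application'' of the $w\mid v$ system of \cite{ZLZ}, and you carry out precisely that application with $w=2$, the key observation being that for $t=v-2$ the unique element of multiplicity $\lambda+1$ must be $v/2$ (since $d_D(x)=d_D(-x)$), which pins down $(c_0,c_1)$ and yields the mod-$8$ contradictions in the three stated cases. Your computations (discriminant $k-\lambda+1$ for $4\mid v$, $k-\lambda-1$ for $v\equiv2\pmod4$, and the resulting congruences) check out, so this is a faithful filled-in version of the paper's argument.
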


\begin{exa}{ \rm
From the necessary condition in Theorem \ref{thm13}, if $t = v-2$, then $2 \vert v$ and $2 \vert \lambda+1$.
For $\mathbb{Z}_v$ with $v \leq 200$, we list (up to complementation) the parameter sets with $t = v-2$ and $k$ odd which satisfy the necessary condition in Theorem \ref{thm13}. Of these, we frame the ones which are then ruled out by Corollary \ref{cor16}.
\[\begin{array}{ccccc}
(6, 3, 1, 4), & \boxed{(20, 5, 1, 18)},& (32, 13, 5, 30),& \boxed{(42, 7, 1, 40)},& (72, 9, 1, 70),\\
 (96, 43, 19, 94),& (102, 23, 5, 100), & (110, 11, 1, 108), & (122, 37, 11, 120),& (146, 53, 19, 144),\\
  \boxed{(150, 41, 11, 148)},& \boxed{(156, 13, 1, 154)}, &\boxed{(180, 75, 31, 178)},& (192, 89, 41, 190). &\\
\end{array}\]}
\end{exa}
For the last set of necessary conditions, we appeal to character theory. A \textit{character} $\chi$ is a homomorphism from a group $G$ into the complex numbers $\mathbb{C}$ under multiplication. In particular, the character $\chi_0: G \rightarrow \mathbb{C}$ defined by $\chi_0(g) = 1$ for all $g \in G$ is called the \textit{principal character}.

\noindent Now we can naturally extend a character of the group $G$ to a character of the corresponding group ring $\mathbb{Z}[G]$ as follows:

\begin{thm}
Let $\chi$ be a character of the group $G$. Then
$$
\chi \bigg(\sum_{g \in G} a_g X^{g} \bigg) := \sum_{g \in G}a_g \chi(g)
$$
is a homomorphism from $\mathbb{Z}[G]$ to $\mathbb{C}$.
\end{thm}

\noindent Next we state a useful property of group characters.
\begin{lem} \label{lem24}
If $\chi$ is a non-principal character of a group $G$, then
$$
\chi(G) = \sum_{g \in G} \chi(g) = 0
$$
\end{lem}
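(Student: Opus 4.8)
The plan is to prove the standard orthogonality-type fact that a nonprincipal character sums to zero over a finite group. First I would recall that the image $\chi(G)$ is a finite cyclic subgroup of $\mathbb{C}^\times$, so each $\chi(g)$ is a root of unity and in particular the sum $S := \sum_{g \in G} \chi(g)$ is a well-defined complex number. The key trick is to exploit the translation-invariance of summation over the whole group: since $\chi$ is nonprincipal, there exists some $h \in G$ with $\chi(h) \neq 1$.

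Next I would carry out the one-line manipulation. Because the map $g \mapsto g + h$ is a bijection of $G$ onto itself, we have
$$
S = \sum_{g \in G} \chi(g) = \sum_{g \in G} \chi(g+h) = \sum_{g \in G} \chi(g)\chi(h) = \chi(h)\, S,
$$
using that $\chi$ is a homomorphism. Hence $(1 - \chi(h))\, S = 0$, and since $\chi(h) \neq 1$ we conclude $S = 0$. This is exactly the assertion $\chi(G) = 0$ (interpreting $\chi(G)$ as $\chi$ applied to the simple quantity $G(X) = \sum_{g \in G} X^g$, consistent with the extension of characters to the group ring defined just above).

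The only subtlety worth a sentence is the reindexing step: I would make explicit that $\{g + h : g \in G\} = G$ as multisets, which is immediate since $G$ is a group and left translation by $h$ is injective with an inverse (translation by $-h$). There is essentially no real obstacle here — the proof is a two-line algebraic identity — but the point to state carefully is the existence of the witnessing element $h$ with $\chi(h) \neq 1$, which is precisely the content of $\chi$ being nonprincipal. I would also remark that the same argument works verbatim for any finite abelian group written additively or multiplicatively, so the lemma applies uniformly to all the group-ring computations that follow.
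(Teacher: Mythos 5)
Your proof is correct: the translation-invariance argument (choose $h$ with $\chi(h)\neq 1$, reindex the sum over $G$ by $g\mapsto g+h$, and conclude $(1-\chi(h))S=0$, so $S=0$) is the standard and complete proof of this lemma, and your care about the reindexing bijection and the existence of the witness $h$ is exactly right. The paper states this lemma without proof, so there is no alternative argument to compare against; your version fills that gap correctly.
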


Now let $\chi$ be a character of $G = \mathbb{Z}_v$ and suppose that $D$ is an $(v, k, \lambda, t)$-ADS in $G$. Then by Proposition \ref{prop4}, in the group ring $\mathbb{Z}[G]$,
$$
D(X)D(X^{-1}) = (k-\lambda-1)\cdot \underline{1} +(\lambda + 1)G(X) - S(X)
$$
for some $t$-subset $S$ of $G$.

\noindent Now allowing $\chi$ to act on both sides of the above equation, we have
\begin{eqnarray*}
\chi(D(X)D(X^{-1}))  &=& \chi((k-\lambda-1)\cdot \underline{1} +(\lambda + 1)G(X) - S(X)) \\
&=& (k-\lambda-1)+(\lambda+1) \chi(G(X))-\chi(S(X))
\end{eqnarray*}

\noindent When $\chi = \chi_0$, the above relation yields
$$
k^2 = (k-\lambda-1)+(\lambda+1)v - t
$$
which is equivalent to the condition given in Theorem \ref{thm13}.

\noindent When $\chi \neq \chi_0$, we have
$$
|\chi(D(X))|^2 = (k-\lambda-1) - \chi(S(X))
$$
since $\chi$ is a homomorphism so $\chi(D(X^{-1})) = \overline{\chi(D(X))}$ and $\chi(G(X)) = 0$ by Lemma \ref{lem24}.

\noindent These last theorems are obtained by considering specific characters in the relation above.

\begin{thm}\cite{ZLZ}
If $2 \vert v$ and a $(v, k, \lambda, t)$-ADS exists in $\mathbb{Z}_v$, then
\begin{enumerate}
\item When $2\vert t$, there exists at least one square in the set
$$\bigg \{k-\lambda-(t+1-4l) \hs | \hs 0 \leq l \leq \frac{t}{2}\bigg \}$$
\item When $2\nmid t$ and $4 \vert v$, there exists at least one square in the set  $$ \bigg\{k-\lambda-(t+1-4l) \hs | \hs 0 \leq l \leq \frac{t-1}{2} \bigg\}$$
\item When $2 \nmid t$ and $4\nmid v$, there exists at least one square in the set  $$\bigg\{k-\lambda-(t-1-4l) \hs | \hs 0 \leq l \leq \frac{t-1}{2} \bigg\}$$
\end{enumerate}
\end{thm}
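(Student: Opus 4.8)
The plan is to apply the character identity displayed just above the statement,
$$
|\chi(D(X))|^2 = (k-\lambda-1) - \chi(S(X)),
$$
to the unique character of $\mathbb{Z}_v$ of order two, which exists precisely because $2\mid v$: it is $\chi(g) = (-1)^g$, and it is non-principal, so the identity applies. Since $\chi$ is integer-valued, $\chi(D(X)) = k_0 - k_1 \in \mathbb{Z}$, where $k_0$ and $k_1$ are the numbers of even and odd members of $D$ (so $k_0+k_1=k$); hence $N := |\chi(D(X))|^2$ is a perfect square. In the same way $\chi(S(X)) = t - 2s_1$, where $s_1 \in \{0,1,\dots,t\}$ is the number of odd members of the $t$-set $S$. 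Substituting gives $N = (k-\lambda) - (t+1-2s_1)$, so $N$ already has the shape of the square we are after; it remains to determine the parity of $s_1$ and the resulting range of the index.

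First I would compute $N \bmod 4$ two ways. On one hand $N = (k_0-k_1)^2 = (k_0+k_1)^2 - 4k_0k_1 \equiv k^2 \pmod 4$. On the other, Theorem~\ref{thm13} can be rewritten as $k^2 - k = (\lambda+1)(v-1) - t$; feeding this into $N = k-\lambda-1-t+2s_1$ and reducing modulo $4$ yields
$$
2s_1 \equiv (\lambda+1)\,v \pmod 4 .
$$
Separately, since $k^2-k$ is even while $v$ is even, the same identity $k^2-k=(\lambda+1)(v-1)-t$ forces $\lambda$ and $t$ to have opposite parity; this is what decides which of the three cases applies. Now the cases split cleanly. (1) If $2\mid t$ then $\lambda$ is odd, so $(\lambda+1)v\equiv 0\pmod4$, hence $s_1 = 2l$ is even, and $N = k-\lambda-(t+1-4l)$ with $0\le l\le t/2$. (2) If $2\nmid t$ and $4\mid v$ then $(\lambda+1)v\equiv0\pmod4$ again, so $s_1=2l$, $N=k-\lambda-(t+1-4l)$, and since $t$ is odd the constraint $0\le 2l\le t$ reads $0\le l\le (t-1)/2$. (3) If $2\nmid t$ and $4\nmid v$ then $v\equiv2\pmod4$ and $\lambda$ is even, so $(\lambda+1)v\equiv2\pmod4$, forcing $s_1=2l+1$ odd, and a short rearrangement gives $N = k-\lambda-(t-1-4l)$ with $0\le l\le (t-1)/2$. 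In each case the perfect square $N$ lies in the indicated set, which is exactly the assertion.

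The only part that takes any care is the congruence $2s_1\equiv(\lambda+1)v\pmod4$: one must correctly combine the value of $\chi(D(X))^2$ modulo $4$ with the global count in Theorem~\ref{thm13}, and then re-index by $l$ with the right endpoints in each parity regime. I do not expect any conceptual difficulty past this bookkeeping — the substantive ingredients are simply the choice of the order-two character and the elementary observation $k^2 - 4k_0k_1 \equiv k^2 \pmod 4$.
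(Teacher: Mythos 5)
Your proposal is correct and follows essentially the same route the paper indicates: it applies the displayed relation $|\chi(D(X))|^2 = (k-\lambda-1)-\chi(S(X))$ to the order-two character $\chi(g)=(-1)^g$ of $\mathbb{Z}_v$, which is exactly the ``specific character'' the paper has in mind, and your mod-$4$ bookkeeping via Theorem \ref{thm13} correctly pins down the parity of $s_1$ in each of the three cases. No gaps; the endpoints $0\le l\le t/2$ and $0\le l\le (t-1)/2$ come out right.
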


\begin{exa}{\rm
In $\mathbb{Z}_{v}$, there are no almost difference sets with parameters $(38,14,4, 3)$, $(68,25,8, 3)$, or $(86,23,5, 4)$.
\begin{proof}
We will argue the first set.

\noindent The parameter set $(38,14,4, 3)$ falls into the third category above, but the set
$$\bigg \{k-\lambda-(t-1-4l) \hs | \hs 0 \leq l \leq \frac{t}{2}\bigg \} = \{8,12\}
$$
is square free.
\end{proof}
}
\end{exa}

\noindent As a special case of the theorem above, we have the following corollary:
\begin{cor} \cite{ZLZ}
If $2 \vert v$ and there exists a $(v,k,\lambda, 1)$-ADS in $\mathbb{Z}_v$, then
\begin{enumerate}
\item If $4 \nmid v$, $k-\lambda$ is a square.
\item If $4 \vert v$, $k-\lambda-2$ is a square.
\item If $v \equiv 4(\text{mod } 8)$, $k-\lambda$ is the sum of two squares.
\end{enumerate}
\end{cor}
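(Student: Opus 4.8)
The plan is to treat the three assertions separately, obtaining the first two directly from the preceding theorem by specializing to $t=1$, and proving the third by a separate computation with a character of order $4$. For part (1), the hypotheses $2\mid v$ and $4\nmid v$ put us in the third alternative of the theorem above; with $t=1$ the index $l$ ranges only over $l=0$, so the set $\{k-\lambda-(t-1-4l)\}$ collapses to the singleton $\{k-\lambda\}$, and thus $k-\lambda$ is a perfect square. For part (2), $4\mid v$ puts us in the second alternative, and again only $l=0$ occurs, so $\{k-\lambda-(t+1-4l)\}$ equals $\{k-\lambda-2\}$, whence $k-\lambda-2$ is a perfect square.

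Part (3) requires an extra argument. Write $v=4m$; the condition $v\equiv 4\pmod 8$ means $m$ is odd. First I would record a parity fact: substituting $t=1$ in Theorem \ref{thm13} gives $k(k-1)=(\lambda+1)v-\lambda-2$, and reducing modulo $2$ (using that $v$ is even and that $k(k-1)$ is always even) forces $\lambda$ to be even. Next, let $\chi$ be the order-$4$ character of $\mathbb{Z}_v$ defined by $\chi(x)=i^{x}$, which is well defined precisely because $4\mid v$. Let $S=\{s\}$ be the (one-element, since $t=1$) set of elements occurring $\lambda$ times among the differences of $D$. Applying $\chi$ to the identity $D(X)D(X^{-1})=(k-\lambda-1)\underline{1}+(\lambda+1)G(X)-S(X)$ from Proposition \ref{prop4}, and using Lemma \ref{lem24} together with $\chi(D(X^{-1}))=\overline{\chi(D(X))}$, gives $|\chi(D(X))|^{2}=(k-\lambda-1)-\chi(s)$. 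Since the left-hand side is a nonnegative real number while $\chi(s)\in\{1,-1,i,-i\}$, we must have $\chi(s)\in\{1,-1\}$, i.e.\ $s$ is even.

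To decide whether $s\equiv 0$ or $s\equiv 2\pmod 4$, I would count the differences of $D$ lying in the coset $2+\langle 4\rangle$ of the subgroup $H=\langle 4\rangle\le \mathbb{Z}_v$, which has odd order $m$. Writing $a_j=|D\cap(j+H)|$ for $j=0,1,2,3$, the number of ordered pairs $(d,d')\in D^{2}$ with $d-d'\equiv 2\pmod 4$ is $2(a_0a_2+a_1a_3)$; on the other hand, since $0\notin 2+H$, the defining property of the ADS makes this number equal $m(\lambda+1)$ if $s\not\equiv 2\pmod 4$ and $m(\lambda+1)-1$ if $s\equiv 2\pmod 4$. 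As $m$ is odd and $\lambda$ is even, $m(\lambda+1)$ is odd, so the even quantity $2(a_0a_2+a_1a_3)$ forces $s\equiv 2\pmod 4$, hence $\chi(s)=i^{2}=-1$. Feeding this back, $|\chi(D(X))|^{2}=k-\lambda$; and since $\chi(D(X))=(a_0-a_2)+(a_1-a_3)i$ has integer real and imaginary parts, $k-\lambda=(a_0-a_2)^{2}+(a_1-a_3)^{2}$ is a sum of two squares, completing part (3).

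Parts (1) and (2) are pure bookkeeping. In part (3) the only genuinely new point is excluding the possibility $s\equiv 0\pmod 4$, which would yield merely that $k-\lambda-2$ is a sum of two squares, i.e.\ no improvement over part (2). I expect the main obstacle to be making the two ingredients interlock: the evenness of $\lambda$ (from the elementary counting identity) and the parity of the coset count modulo $2$, which together pin down $s\equiv 2\pmod 4$. Once that is in hand, the rest is a one-line evaluation of the order-$4$ character sum.
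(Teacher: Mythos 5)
Your proposal is correct, and for parts (1) and (2) it is exactly what the paper intends: the survey states the corollary as a ``special case of the theorem above'' (deferring to \cite{ZLZ}) and gives no proof, and your specialization $t=1$, $l=0$ of the second and third alternatives is the right bookkeeping. The interesting point is part (3), which, as you correctly observe, is \emph{not} literally a special case of the displayed theorem (that theorem only yields that $k-\lambda-2$ is a square when $4\mid v$); the survey silently leaves this to \cite{ZLZ}, whereas you supply a complete argument in the spirit of the section's character-theoretic setup: apply the order-$4$ character $\chi(x)=i^x$ to the identity $|\chi(D(X))|^2=(k-\lambda-1)-\chi(S(X))$, pin down the unique exceptional difference $s$ modulo $4$, and read off $k-\lambda=(a_0-a_2)^2+(a_1-a_3)^2$. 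Your coset-counting step is sound: with $v=4m$, $m$ odd, and $\lambda$ even (which follows from Theorem \ref{thm13} at $t=1$), the number of ordered pairs of elements of $D$ whose difference lies in $2+\langle 4\rangle$ is the even number $2(a_0a_2+a_1a_3)$, while it equals $m(\lambda+1)$ (odd) unless $s\equiv 2\ (\text{mod }4)$, forcing $\chi(s)=-1$. One small economy: this parity count already rules out $s$ odd as well (an odd $s$ would again give the odd value $m(\lambda+1)$), so your preliminary step deducing that $s$ is even from the realness of $\chi(s)$ is not needed. Net effect: your write-up proves more than the survey records, and does so by the method the survey only gestures at.
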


\noindent The next theorems come from considering the ternary character when $3 \vert v$.

\begin{thm}\cite{ZLZ}
When $3 \vert v$ and $2 \nmid t$, at least one element in the set
$$
\bigg \{ k - (\lambda +1)-(t-3l) \hs | \hs 0 \leq l \leq \frac{t-1}{2} \bigg \}
$$
has the form $x^2+y^2+xy$.
$\newline$
When $3 \vert v$ and $2 \vert t$, at least one element in the set
$$
\bigg \{k - (\lambda +1)-(t-3l) \hs | \hs 0 \leq l \leq \frac{t}{2} \bigg \} 
$$
has the form $x^2+y^2+xy$.
\end{thm}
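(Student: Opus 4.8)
The plan is to specialize the character relation displayed just before the statement — namely $|\chi(D(X))|^2 = (k-\lambda-1)-\chi(S(X))$, valid for every non-principal character $\chi$ of $\mathbb{Z}_v$, with $S$ the $t$-set supplied by Proposition \ref{prop4} — to a character of order three. Since $3 \mid v$, the group $\mathbb{Z}_v$ admits a non-principal character $\chi$ with $\chi^3 = \chi_0$; its image is $\{1,\omega,\omega^2\}$ with $\omega = e^{2\pi i/3}$, its kernel $H$ has index $3$, and $G = H \cup H_1 \cup H_2$ is the coset decomposition on which $\chi$ takes the values $1,\omega,\omega^2$. Everything then reduces to expanding both sides of the relation in the $\mathbb{Q}$-basis $\{1,\omega\}$ of $\mathbb{Q}(\omega)$.

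For the left-hand side I would write $\chi(D(X)) = p + q\omega + r\omega^2$ with $p = |D\cap H|$, $q = |D\cap H_1|$, $r = |D\cap H_2|$, and compute, via $\omega + \omega^2 = -1$, that $|\chi(D(X))|^2 = p^2 + q^2 + r^2 - pq - qr - rp$. The elementary but crucial point is that this is the Eisenstein norm form: setting $x = p - q$ and $y = q - r$ gives $p^2 + q^2 + r^2 - pq - qr - rp = x^2 + xy + y^2$. Hence the left-hand side is automatically a value of $x^2 + y^2 + xy$ at integers $x,y$.

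For the right-hand side, write $\chi(S(X)) = a + b\omega + c\omega^2$ with $a = |S\cap H|$, $b = |S\cap H_1|$, $c = |S\cap H_2|$ and $a + b + c = t$. Since $(k-\lambda-1) - \chi(S(X))$ equals the rational integer $|\chi(D(X))|^2$, its $\omega$-coefficient must vanish, which forces $b = c$; then $\chi(S(X)) = a + b(\omega+\omega^2) = a - b = t - 3b$. Writing $l = b$ and using $b \geq 0$, $a = t - 2b \geq 0$ we get $0 \leq l \leq \lfloor t/2\rfloor$, i.e.\ $l \leq (t-1)/2$ when $t$ is odd and $l \leq t/2$ when $t$ is even. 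Equating the two sides,
$$x^2 + y^2 + xy = k - (\lambda+1) - (t - 3l),$$
so at least one element of the stated set — the one with index $l = b$ — is represented by $x^2 + y^2 + xy$, which is the claim.

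The computations are all short; the only delicate step is the passage $b = c$, which rests on the integrality of $|\chi(D(X))|^2$ together with the $\mathbb{Q}$-linear independence of $1$ and $\omega$ (equivalently, a real element of $\mathbb{Z}[\omega]$ expressed over $\{1,\omega,\omega^2\}$ must carry equal coefficients on $\omega$ and $\omega^2$). I would also note at the outset that the order-three character exists precisely because $3 \mid v$, and that replacing $\chi$ by $\overline{\chi}$ yields the same equation, so no case distinction on the choice of cubic character is needed.
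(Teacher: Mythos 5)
Your proof is correct and follows exactly the route the paper intends: it specializes the displayed relation $|\chi(D(X))|^2 = (k-\lambda-1)-\chi(S(X))$ to a character of order three (which exists since $3\mid v$), notes that $|\chi(D(X))|^2$ is the Eisenstein norm $x^2+xy+y^2$ of the coset counts of $D$, and that rationality forces $\chi(S(X)) = t-3l$ with $0\le l\le \lfloor t/2\rfloor$. The paper only sketches this ("considering the ternary character"), and your write-up supplies precisely the missing details, including the correct index ranges for $t$ odd and $t$ even.
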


\begin{thm}\cite{ZLZ}
When $3 \vert v$ and $2 \nmid (v-1-t)$, at least one element in the set
$$
\bigg\{k-\lambda+(v-1-t-3l) \hs | \hs 0 \leq l \leq \frac{v-t-2}{2}\bigg\}
$$
has the form $x^2+y^2+xy$.
$\newline$
When $3 \vert v$ and $2 \vert (v-1-t)$, at least one element in the set
$$
\bigg\{k-\lambda+(v-1-t-3l) \hs | \hs 0 \leq l \leq \frac{v-t-1}{2}\bigg\}
$$
has the form $x^2+y^2+xy$.
\end{thm}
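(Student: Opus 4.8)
The plan is to run the same cubic-character argument that underlies the preceding theorem, but with the roles of $S$ (the $\lambda$-set) and its complement interchanged. Since $3 \mid v$, the cyclic group $\mathbb{Z}_v$ surjects onto $\mathbb{Z}_3$, so pulling back a nontrivial character of $\mathbb{Z}_3$ produces a character $\chi$ of $G = \mathbb{Z}_v$ of order exactly $3$. Its values lie in $\{1,\omega,\omega^2\}$ with $\omega = e^{2\pi i/3}$, so $\chi$ sends every simple quantity into the ring of Eisenstein integers $\mathbb{Z}[\omega]$.

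First I would set $T := G \setminus (S \cup \{0\})$, the set of the $v-1-t$ elements that occur $\lambda+1$ times among the differences, so that $T(X) = G(X) - S(X) - \underline{1}$. Applying $\chi$ to the group-ring identity $D(X)D(X^{-1}) = (k-\lambda-1)\underline{1} + (\lambda+1)G(X) - S(X)$ coming from Proposition \ref{prop4}, and using $\chi(G(X)) = 0$ from Lemma \ref{lem24} together with $\chi(D(X^{-1})) = \overline{\chi(D(X))}$, one obtains $|\chi(D(X))|^2 = (k-\lambda-1) - \chi(S(X))$; substituting $\chi(S(X)) = -\chi(T(X)) - 1$ turns this into the key identity
$$
|\chi(D(X))|^2 = (k-\lambda) + \chi(T(X)).
$$

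The left-hand side is the norm of the Eisenstein integer $\chi(D(X))$, hence a nonnegative rational integer; writing $\chi(D(X)) = x + y\omega$ it equals $x^2 - xy + y^2$, which is of the form $x^2+y^2+xy$ after replacing $y$ by $-y$. Consequently $\chi(T(X)) = |\chi(D(X))|^2 - (k-\lambda)$ is a rational integer. Now I would decompose $T$ along the fibres of $\chi$: with $n_i = |\{j \in T : \chi(j) = \omega^i\}|$ one has $n_0+n_1+n_2 = v-1-t$ and $\chi(T(X)) = n_0 + n_1\omega + n_2\omega^2$, and rationality forces $n_1 = n_2 =: l$, whence $\chi(T(X)) = n_0 - l = (v-1-t) - 3l$. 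The requirements $l \ge 0$ and $n_0 = v-1-t-2l \ge 0$ give $0 \le l \le \lfloor (v-1-t)/2\rfloor$, which equals $(v-t-2)/2$ when $v-1-t$ is odd and $(v-t-1)/2$ when $v-1-t$ is even, exactly the two ranges in the statement. For this particular $l$, the key identity reads $|\chi(D(X))|^2 = k - \lambda + (v-1-t-3l)$, and since the left side has the form $x^2+y^2+xy$, so does that element of the displayed set.

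The group-ring manipulations and the classical description of norms from $\mathbb{Z}[\omega]$ are routine. The step needing real care is the counting: one must confirm that a character of order $3$ exists (which $3 \mid v$ guarantees), that the three fibre-counts $n_0, n_1, n_2$ are the right invariants, and above all that the inequality $n_0 \ge 0$ translates into precisely the stated endpoint for $l$ in each parity case, so that the two displayed index ranges come out exactly as claimed.
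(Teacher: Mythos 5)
Your proposal is correct and follows essentially the same route the paper indicates: apply the pulled-back cubic character to the group-ring identity $|\chi(D(X))|^2=(k-\lambda-1)-\chi(S(X))$, here rewritten via the complement $T$ with $\chi(S(X))=-1-\chi(T(X))$, and use that Eisenstein-integer norms have the form $x^2+xy+y^2$ together with the fiber-count argument giving $\chi(T(X))=(v-1-t)-3l$ with the stated ranges for $l$. The sign substitution $y\mapsto -y$ and the parity analysis of $\lfloor (v-1-t)/2\rfloor$ are handled correctly, so no gaps remain.
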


\section{Difference Sets, Partial Difference Sets and Almost Difference Sets}

In this section, we highlight the interplay between almost difference sets, difference sets, and partial difference sets. We discuss how obtain difference sets from almost differences and contrariwise. We also discuss the circumstances under which a partial difference set qualifies as an almost difference set.

Let  $G$ be an additive group of order $v$ and let $D$ be a $k$-element subset of $G$. If the difference function $d_D(x)$ equals $\lambda$ for every nonzero element $x$ of $G$, we say that $D$ is a $(v, k, \lambda)$-difference set in $G$.  In first two subsections, we give a connection between certain difference sets and almost difference sets by demonstrating how to construct almost difference sets from difference sets and vice versa. The results below come from a paper by Arasu, Ding, Helleseth, Kumar, and Martinsen \cite{AD}. Throughout these first two subsections we let $G$ be an additive abelian group of order $v$ where $v \equiv 1 (\text{mod }4)$.

\subsection{Almost difference sets contained within difference sets}
$\newline$ $\newline$
\indent Our first lemma gives a parameter restriction for any ADS with $t = \frac{v-1}{2}$ obtained from a difference set by the removal of an element.

\begin{lem}\cite{AD} \label{lem31}
Let $D$ be a $(v, k, \lambda)$-difference set of $G$ and let $d \in D$. If $D \setminus \{d\}$ is an $(n,k-1, \lambda-1, \frac{v-1}{2})$-ADS in $G$, then
$$
k = \frac{v+3}{4} \hs \text{ and } \hs \lambda = \frac{v+3}{16}
$$
\end{lem}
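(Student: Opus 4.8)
The plan is to apply the counting identity of Theorem \ref{thm13} to both $D$ and $D\setminus\{d\}$ and solve the resulting system. Since $D$ is a $(v,k,\lambda)$-difference set, the classical relation $k(k-1)=\lambda(v-1)$ holds. Since $D\setminus\{d\}$ is an $(v,k-1,\lambda-1,\tfrac{v-1}{2})$-ADS, Theorem \ref{thm13} gives
$$
(k-1)(k-2) = (\lambda-1)\cdot\frac{v-1}{2} + \lambda\cdot\frac{v-1}{2} = (2\lambda-1)\cdot\frac{v-1}{2}.
$$
So I would have two equations in the unknowns $k,\lambda$ (with $v$ a parameter): $k(k-1)=\lambda(v-1)$ and $(k-1)(k-2)=(2\lambda-1)\tfrac{v-1}{2}$.

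Next I would eliminate $v-1$. From the first equation $v-1 = k(k-1)/\lambda$; substituting into the second gives
$$
(k-1)(k-2) = \frac{2\lambda-1}{2}\cdot\frac{k(k-1)}{\lambda}.
$$
Dividing by $k-1$ (nonzero, as $D\setminus\{d\}$ is nonempty and not all of $G$) yields $2\lambda(k-2) = (2\lambda-1)k$, i.e. $2\lambda k - 4\lambda = 2\lambda k - k$, hence $k = 4\lambda$. Feeding $k=4\lambda$ back into $k(k-1)=\lambda(v-1)$ gives $4\lambda(4\lambda-1)=\lambda(v-1)$, so $4(4\lambda-1)=v-1$, i.e. $16\lambda = v+3$, giving $\lambda = \tfrac{v+3}{16}$ and then $k = 4\lambda = \tfrac{v+3}{4}$, exactly as claimed.

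The computation is entirely elementary, so there is no real obstacle; the only points requiring a word of care are (i) justifying that $\lambda\neq 0$ and $k-1\neq 0$ so the divisions are legitimate — $\lambda=0$ would force $D$ to have at most one element, contradicting that $D\setminus\{d\}$ is an ADS with the stated parameters, and $k\geq 2$ similarly — and (ii) noting consistency with the hypothesis $v\equiv 1\pmod 4$, under which $\tfrac{v+3}{4}$ is an integer, while the stronger divisibility $16\mid v+3$ emerges as a necessary consequence rather than an assumption. I would present the two applications of Theorem \ref{thm13}, the elimination of $v$, and the back-substitution, in that order.
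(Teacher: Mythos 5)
Your proposal is correct and takes essentially the same route as the paper: apply the classical difference-set identity $k(k-1)=\lambda(v-1)$ together with the ADS counting identity of Theorem \ref{thm13} to $D\setminus\{d\}$, then eliminate $v$ and back-substitute. In fact your displayed system is the right one — the paper's printed system misstates the identities (it has $\lambda v$ in place of $\lambda(v-1)$ and $\tfrac{v-1}{2}-1$ in place of $\tfrac{v-1}{2}$), but the intended argument is the same and your worked-out solution supplies the "solving this system" step the paper omits.
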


\begin{proof}
By the necessary conditions of difference sets and almost difference sets, we have
$$
\begin{cases}
k(k-1) = \lambda v \\
(k-1)(k-2) = (\lambda-1)\frac{v-1}{2}+\lambda(\frac{v-1}{2}-1)
\end{cases}
$$
and solving this system of equations gives the desired conclusion.
\end{proof}

The following theorem shows how to construct an almost difference set by removing an element from a difference set.

\begin{thm}\cite{AD}
Let $D$ be a $(v, \frac{v+3}{4}, \frac{v+3}{16})$-difference set of $G$, and let $d$ be an element of $D$. If $2d$ cannot be written as the sum of two distinct elements of $D$, the $D \setminus \{d\}$ is $(v, \frac{v-1}{4}, \frac{v-13}{16}, \frac{v-1}{2})$-ADS of $G$.
\end{thm}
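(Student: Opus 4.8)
Write $D' = D \setminus \{d\}$; since $D$ has $\frac{v+3}{4}$ elements, $|D'| = \frac{v-1}{4}$, which is already the intended cardinality of the ADS. The plan is to determine the difference function $d_{D'}(x)$ for every nonzero $x \in G$ by comparing it with $d_D(x)$, which equals $\lambda = \frac{v+3}{16}$ for all $x \neq 0$ because $D$ is a difference set.

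First I would fix $x \neq 0$ and note that
$$(D'+x) \cap D' = \{\, y \in G : y \in D,\ y - x \in D,\ y \neq d,\ y \neq d + x \,\},$$
so $(D'+x) \cap D'$ is obtained from the $\lambda$-element set $(D+x) \cap D$ by deleting the elements $d$ and $d+x$ whenever they happen to lie in it. Now $d \in (D+x)\cap D$ exactly when $d - x \in D$, and $d+x \in (D+x) \cap D$ exactly when $d+x \in D$; since $x \neq 0$ the two candidate elements $d$ and $d+x$ are distinct, hence
$$d_{D'}(x) = \lambda - [\,d - x \in D\,] - [\,d + x \in D\,],$$
where $[\cdot]$ denotes $1$ when the bracketed statement holds and $0$ otherwise.

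The next step uses the hypothesis on $2d$. If both $d - x$ and $d + x$ belonged to $D$, they would be two elements of $D$, distinct because $x \neq 0$, whose sum is $2d$, contradicting the assumption. Therefore $[\,d-x \in D\,] + [\,d+x\in D\,] \le 1$ for every nonzero $x$, and consequently
$$d_{D'}(x) \in \left\{ \frac{v+3}{16},\ \frac{v-13}{16} \right\} = \{\lambda,\ \lambda - 1\}.$$
So $D'$ has exactly the two-valued difference function demanded of a $(v, \frac{v-1}{4}, \frac{v-13}{16}, t)$-ADS. To pin down $t$ I would appeal to the counting identity $\sum_{x \neq 0} d_{D'}(x) = |D'|(|D'| - 1)$ (equivalently, to the necessary condition of Theorem \ref{thm13} applied to $D'$): if $t$ denotes the number of nonzero $x$ with $d_{D'}(x) = \frac{v-13}{16}$, then $t \cdot \frac{v-13}{16} + (v-1-t)\cdot \frac{v+3}{16} = \frac{v-1}{4}\cdot\frac{v-5}{4}$, which rearranges to $16t = 8(v-1)$, i.e. $t = \frac{v-1}{2}$.

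The argument is really just careful bookkeeping, so I do not anticipate a genuine obstacle. The two places that need attention are verifying that the elements $d$ and $d+x$ removed from $(D+x)\cap D$ are never the same element (which is exactly where $x \neq 0$ is used) and confirming that the pair $d-x,\, d+x$ to which the hypothesis is applied is honestly a pair of distinct elements. One could instead compute $t$ directly as the number of nonzero $x$ for which exactly one of $d-x,\, d+x$ lies in $D$, but the summation identity is the quickest route.
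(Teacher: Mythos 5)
Your argument is correct, and since the paper itself supplies no proof of this theorem (it is quoted from \cite{AD}), there is nothing to diverge from: the bookkeeping you describe --- $d_{D'}(x) = \lambda - [\,d-x \in D\,] - [\,d+x \in D\,]$, the hypothesis on $2d$ forcing at most one of the two indicators to equal $1$, and the counting identity $\sum_{x\neq 0} d_{D'}(x) = \frac{v-1}{4}\cdot\frac{v-5}{4}$ pinning down $t = \frac{v-1}{2}$ --- is exactly the standard argument. One justification should be tightened: the distinctness of $d-x$ and $d+x$ is not a consequence of $x \neq 0$ alone but of $2x \neq 0$; this is harmless here because the standing assumption in this section is $v \equiv 1 \ (\mathrm{mod}\ 4)$ (and integrality of $\frac{v+3}{16}$ even forces $v \equiv 13 \ (\mathrm{mod}\ 16)$), so $G$ has odd order and no nonzero $x$ satisfies $2x = 0$; in a group of even order this step, and indeed the statement, could fail. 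The distinctness of the two deleted candidates $d$ and $d+x$, by contrast, does follow from $x \neq 0$ alone, as you say.
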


\begin{exa}{\rm
The set $\{0,1,3,9\}$ is a $(13,4,1)$-difference set in $\mathbb{Z}_{13}$ while the set $\{1,3,9\}$ is a $(13, 3, 1, 6)$-ADS in $\mathbb{Z}_{13}$. }
\end{exa}

The next theorem shows how to construct a difference set by adding an element to an almost difference set.

\begin{thm} \cite{AD}
Let $D = \{d_1, d_2, ..., d_{(v-1)/4}\}$ be a $(v, \frac{v-1}{4}, \frac{v-13}{16}, \frac{v-1}{2})$-ADS in $G$. Let $H$ be the subset of $G\setminus\{0\}$ such that $d_D(x) = \frac{v-13}{16}$ for each $x \in H$ and $d_D(x) = \frac{v+3}{16}$ for each nonzero $x \in G\setminus H$. Let $d \in G \setminus D$. If
\begin{enumerate}
\item $2d$ is not the sum of any two distinct elements of $D$, and
\item $d-d_i \in H$ and $d_i-d \in H$ for each $1 \leq i \leq \frac{v-1}{4}$.
\end{enumerate}
then $D \cup \{d\}$ is a $(v, \frac{v+3}{4}, \frac{v+3}{16})$-difference set of $G$.
\end{thm}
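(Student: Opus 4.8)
The plan is to show directly that $D' := D \cup \{d\}$ has constant difference function $d_{D'}(x) = \frac{v+3}{16}$ on $G \setminus \{0\}$; since $d \notin D$ forces $|D'| = \frac{v-1}{4}+1 = \frac{v+3}{4}$, the definition of a difference set then gives the conclusion.

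First I would write, for a fixed nonzero $x$,
\[(D'+x)\cap D' = \big[(D+x)\cap D\big]\cup\big[(D+x)\cap\{d\}\big]\cup\big[\{d+x\}\cap D\big]\cup\big[\{d+x\}\cap\{d\}\big],\]
observe that the four pieces on the right are pairwise disjoint (using $x\neq 0$ and $d\notin D$), and so, counting sizes,
\[d_{D'}(x) = d_D(x) + \varepsilon_A(x) + \varepsilon_B(x),\]
where $A := d - D = \{d - d_i\}$, $B := D - d = \{d_i - d\}$, and $\varepsilon_A,\varepsilon_B$ denote the indicator functions of $A,B$ respectively.

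Next comes the combinatorial heart of the argument. Both $A$ and $B$ have exactly $\frac{v-1}{4}$ elements (the $d_i$ are distinct and $d\notin D$, so in particular $0\notin A\cup B$), and they are disjoint: $d-d_i=d_j-d$ means $2d=d_i+d_j$, which contradicts hypothesis~(1) when $i\neq j$, while $i=j$ would force $2(d-d_i)=0$ and hence $d=d_i$, impossible since $v$ is odd (as $v\equiv 1\,(\text{mod }4)$). Hypothesis~(2) says precisely $A\cup B\subseteq H$, and since $|A|+|B|=\frac{v-1}{2}=t=|H|$, we conclude $H=A\sqcup B$. I then split into cases: for $x\in H$ exactly one of $\varepsilon_A(x),\varepsilon_B(x)$ equals $1$, so $d_{D'}(x)=d_D(x)+1=\frac{v-13}{16}+1=\frac{v+3}{16}$; for nonzero $x\notin H$ we have $x\notin A\cup B$, so $d_{D'}(x)=d_D(x)=\frac{v+3}{16}$ (this being the value $\lambda+1$ of the ADS). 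Either way $d_{D'}$ is constant on $G\setminus\{0\}$, as required.

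The only genuinely delicate step is identifying $H$ with the disjoint union $A\sqcup B$: this is where both hypotheses are consumed and where $v\equiv 1\,(\text{mod }4)$ is essential, since otherwise $A$ and $B$ could overlap in an element of order two, forcing $d_{D'}(x)=d_D(x)+2$ at that point and destroying the difference-set property. Everything else is routine bookkeeping with the difference function $d_D$.
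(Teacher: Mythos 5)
Your proof is correct, and it is the natural counting argument for this statement; note that the paper itself gives no proof here (it only cites Arasu et al.~\cite{AD}), so there is no internal proof to compare against. The key steps all check out: the decomposition $d_{D\cup\{d\}}(x)=d_D(x)+\varepsilon_{d-D}(x)+\varepsilon_{D-d}(x)$ for $x\neq 0$, the disjointness of $d-D$ and $D-d$ from hypothesis (1) together with the oddness of $v$, and the cardinality count $|d-D|+|D-d|=\tfrac{v-1}{2}=|H|$ combined with hypothesis (2) to get $H=(d-D)\sqcup(D-d)$, which forces $d_{D\cup\{d\}}\equiv\tfrac{v+3}{16}$ on $G\setminus\{0\}$.
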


\begin{exa}{\rm
The set $D = \{0,4,6\}$ is $(13,3,1,6)$-ADS in $\mathbb{Z}_{13}$ and $H = \{1,3,5,8,10,12\}$ is the subset of $\mathbb{Z}_{13} \setminus\{0\}$ such that $d_D(x) = 0$ for each $x \in H$ and $d_D(x) = 1$ for each $x \in \mathbb{Z}_{13} \setminus (H \hs \cup \{0\}) = \{2, 4, 6, 7, 9, 11\}$. Further, $1 \in G \setminus D$ and satisfies the conditions of the above theorem. Thus, $ \{0, 1, 4, 6\}$ is a $(13, 4, 1)$- difference set in $\mathbb{Z}_{13}$.}
\end{exa}

\subsection{Difference sets contained within almost difference sets}
$\newline$ $\newline$
\indent Our first lemma gives a parameter restriction for any ADS with $t = \frac{v-1}{2}$ obtained from a DS by the addition of an element.

\begin{lem}\cite{AD}
Let $D$ be a $(v, k, \lambda)$-difference set of $G$ and let $d \in G \setminus D$. If $D \cup \{d\}$ is an $(n,k+1, \lambda, \frac{v-1}{2})$-ADS in $G$, then
$$
k = \frac{v-1}{4} \hs \text{ and } \hs \lambda = \frac{v-5}{16}
$$
\end{lem}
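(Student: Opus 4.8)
The plan is to mirror the proof of Lemma~\ref{lem31}: write down the parameter-counting identity for the difference set $D$, write down the corresponding identity for the almost difference set $D \cup \{d\}$, and then solve the resulting two-equation system for $k$ and $\lambda$.

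First I would record the two necessary conditions. Since $D$ is a $(v,k,\lambda)$-difference set, counting ordered pairs of distinct elements of $D$ (equivalently, applying the principal character to the group-ring identity of Proposition~\ref{prop6}) gives
$$k(k-1) = \lambda(v-1).$$
Since $D \cup \{d\}$ is a $(v, k+1, \lambda, \frac{v-1}{2})$-ADS — note that it genuinely has $k+1$ elements because $d \notin D$ — Theorem~\ref{thm13} applied with the substitutions $k \mapsto k+1$, $\lambda \mapsto \lambda$, $t = \frac{v-1}{2}$ yields
$$(k+1)k = \lambda \cdot \frac{v-1}{2} + (\lambda+1)\bigl(v-1-\tfrac{v-1}{2}\bigr) = (2\lambda+1)\frac{v-1}{2},$$
where I have used the identity $v - 1 - \frac{v-1}{2} = \frac{v-1}{2}$.

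Next I would subtract the first identity from the second. The quadratic terms in $k$ cancel, leaving $2k = (2\lambda+1)\frac{v-1}{2} - \lambda(v-1) = \frac{v-1}{2}$, hence $k = \frac{v-1}{4}$. Substituting this back into $k(k-1) = \lambda(v-1)$ gives $\lambda = \dfrac{k(k-1)}{v-1} = \dfrac{1}{4}\bigl(\tfrac{v-1}{4} - 1\bigr) = \dfrac{v-5}{16}$, which is exactly the claimed conclusion.

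I do not expect any genuine obstacle here: once the two counting identities are set up correctly, the conclusion is forced by elementary linear algebra in two unknowns. The only points that require a little care are the bookkeeping $v-1-t = \frac{v-1}{2}$ in the ADS count (which is precisely where the special value $t = \frac{v-1}{2}$ is used) and keeping straight that the difference-set identity is $k(k-1)=\lambda(v-1)$ rather than $k(k-1)=\lambda v$. One could additionally remark, as a consistency check, that $\frac{v-1}{4}$ and $\frac{v-5}{16}$ must be nonnegative integers, which restricts the congruence class of $v$ for which such a configuration can exist.
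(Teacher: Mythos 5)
Your proof is correct and follows essentially the same route as the paper: write the counting identity $k(k-1)=\lambda(v-1)$ for the difference set and the Theorem~\ref{thm13} identity $(k+1)k=(2\lambda+1)\frac{v-1}{2}$ for the ADS, then solve the two-equation system (the paper simply says the argument is ``similar to Lemma~\ref{lem31}''). Note that your bookkeeping is in fact cleaner than the displayed system in the paper's proof of Lemma~\ref{lem31}, which misprints the difference-set count as $k(k-1)=\lambda v$ and the ADS count with a spurious $\frac{v-1}{2}-1$; your versions are the ones consistent with the stated conclusions.
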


\begin{proof}
The proof is similar to Lemma \ref{lem31}.
\end{proof}

The following theorem shows how to construct an almost difference set by adding an element to difference set.

\begin{thm}\cite{AD} Let $D$ be a $(v, \frac{v-1}{4}, \frac{v-5}{16})$-difference set in $G$, and let $d \in G \setminus D$. If $2d$ cannot be written as the sum of two distinct elements of $D$, then $D \cup \{d\}$ is a $(v, \frac{v+3}{4}, \frac{v-5}{16}, \frac{v-1}{2})$-ADS in $G$.
\end{thm}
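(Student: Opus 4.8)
The plan is to compute the difference function of $D':=D\cup\{d\}$ directly and show that it takes only the two values $\lambda$ and $\lambda+1$, with the stated frequencies. Fix a nonzero $x\in G$ and split the count $d_{D'}(x)=|(D'+x)\cap D'|$ according to whether an element $y$ counted there, and its $x$-predecessor $y-x$, each lie in $D$ or equal $d$. Since $d\notin D$ and $x\neq 0$, the combination ``$y=d$ and $y-x=d$'' is impossible, and the three remaining combinations --- both $y,y-x\in D$; $y\in D$ with $y-x=d$; $y=d$ with $y-x\in D$ --- are pairwise disjoint. Counting each (the first gives $d_D(x)=\lambda$ because $D$ is a difference set, the second contributes $1$ precisely when $d+x\in D$, the third precisely when $d-x\in D$), I obtain
\[
d_{D'}(x)\;=\;\lambda\;+\;\mathbf{1}_{\,d+x\in D}\;+\;\mathbf{1}_{\,d-x\in D},
\]
where $\mathbf{1}_{(\cdot)}$ is $1$ if the condition holds and $0$ otherwise.

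Next I would rewrite the two indicators as membership in translates of $D$: $d+x\in D$ iff $x\in D-d$, and $d-x\in D$ iff $x\in d-D=-(D-d)$. Each of $D-d$ and $d-D$ is a $k$-subset of $G\setminus\{0\}$ --- they are a translate and its negative of $D$, and $0$ lying in either would force $d\in D$. The key step is that $D-d$ and $d-D$ are \emph{disjoint}: an $x$ in the intersection yields $d_1-d=d-d_2$ with $d_1,d_2\in D$, i.e.\ $d_1+d_2=2d$. Since $v\equiv 1\pmod 4$ the group $G$ has odd order and hence no $2$-torsion, so $d_1=d_2$ would give $d_1=d\notin D$; thus $d_1\neq d_2$, and the hypothesis that $2d$ is not a sum of two distinct elements of $D$ forbids this. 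Consequently $(D-d)\cap(d-D)=\emptyset$, so $d_{D'}(x)$ never equals $\lambda+2$: it equals $\lambda+1$ exactly on the $2k$-element set $(D-d)\cup(d-D)$ and $\lambda$ on the remaining $v-1-2k$ nonzero elements of $G$.

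Finally, substituting $k=\tfrac{v-1}{4}$ gives $2k=\tfrac{v-1}{2}=v-1-2k$, while $|D'|=k+1=\tfrac{v+3}{4}$ and the smaller difference-function value is $\lambda=\tfrac{v-5}{16}$; hence $D'$ is a $\big(v,\tfrac{v+3}{4},\tfrac{v-5}{16},\tfrac{v-1}{2}\big)$-ADS, as claimed. (Equivalently, one may work in $\mathbb{Z}[G]$: expand $D'(X)D'(X^{-1})=D(X)D(X^{-1})+\underline{1}+X^{-d}D(X)+X^{d}D(X^{-1})$, insert the difference-set identity of Proposition~\ref{prop6}, recognize the last two summands as the simple quantities of $D-d$ and $d-D$, and match the result against Proposition~\ref{prop4}.) The only genuine subtlety --- the main obstacle --- is the disjointness step: one must check that the restriction to \emph{distinct} elements in the hypothesis costs nothing, which is exactly what the absence of $2$-torsion provides, and that the $2k$ elements of $(D-d)\cup(d-D)$ are truly distinct and nonzero, so that the frequency count $2k$ versus $v-1-2k$ is exact.
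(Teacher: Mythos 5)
Your argument is correct, and it fills in a proof that the survey itself omits: the paper states this theorem with only a citation to \cite{AD} and gives no proof, proving instead the analogous ``adjoin $0$ to a difference set'' construction (construction $7$ in Section 6, and Theorem \ref{Ci0}) by the group-ring expansion $\bigl(D(X)+\underline{1}\bigr)\bigl(D(X^{-1})+\underline{1}\bigr)$. Your direct count $d_{D'}(x)=\lambda+\mathbf{1}_{d+x\in D}+\mathbf{1}_{d-x\in D}$ is exactly the coefficientwise reading of that same expansion, so the two routes are essentially equivalent; what your write-up adds, and what the paper's analogous proofs never need to confront (since there the extra translates $C_i(X)$ and $C_{i+\frac{e}{2}}(X)$ are automatically disjoint cyclotomic classes), is the verification that $D-d$ and $d-D$ are disjoint and avoid $0$. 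You handle that correctly: the hypothesis kills the case $d_1\neq d_2$ with $d_1+d_2=2d$, and the case $d_1=d_2$ is excluded because $v\equiv 1\pmod 4$ (in force throughout this subsection of the paper) makes $|G|$ odd, so doubling is injective and $d_1=d\notin D$ would follow; together with $d\notin D$ this gives exactly $2k=\frac{v-1}{2}$ nonzero elements where the value is $\lambda+1$ and $v-1-2k=\frac{v-1}{2}$ where it is $\lambda=\frac{v-5}{16}$, matching the claimed $\bigl(v,\frac{v+3}{4},\frac{v-5}{16},\frac{v-1}{2}\bigr)$ parameters under the paper's convention that $t$ counts the elements of multiplicity $\lambda$.
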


\begin{exa}{\rm
The set $D = \{0,1,4,14,16\}$ is a $(21, 5, 1)$-difference set in $\mathbb{Z}_{21}$ and $d = 3 \in \mathbb{Z}_{21} \setminus D$ such that $2d$ is not expressible as a sum of two distinct elements in $D$. Thus $D \hs \cup \hs \{d\} = \{0, 1, 3, 4, 14, 16\}$ is a $(21,6,1, 10)$-ADS in $\mathbb{Z}_{21}$.  }
\end{exa}

The next theorem shows how to construct a difference set by removing an element from an almost difference set.

\begin{thm} \cite{AD}
Let $D = \{d, d_1, d_2, ..., d_{(v-1)/4}\}$ be a $(v, \frac{v+3}{4}, \frac{v-5}{16}, \frac{v-1}{2})$-ADS in $G$. Let $H$ be the subset of $G\setminus\{0\}$ such that $d_D(x) = \frac{v-5}{16}$ for each $x \in H$ and $d_D(x) = \frac{v+11}{16}$ for each nonzero $x \in G\setminus H$. Let $d \in D$. If
\begin{enumerate}
\item $2d$ is not a sum of any two distinct elements of $D$, and
\item $d-d_i \in H$ and $d_i-d \in G\setminus(H \hs \cup \hs \{0\})$ for each $1 \leq i \leq \frac{v-1}{4}$.
\end{enumerate}
then $D \setminus \{d\}$ is a $(v, \frac{v-1}{4}, \frac{v-5}{16})$- difference set of $G$.
\end{thm}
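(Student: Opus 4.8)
The plan is to work directly with difference functions: I will show that $D':=D\setminus\{d\}=\{d_1,\dots,d_{(v-1)/4}\}$ satisfies $d_{D'}(x)=\tfrac{v-5}{16}$ for every $x\in G\setminus\{0\}$, which, since $|D'|=\tfrac{v+3}{4}-1=\tfrac{v-1}{4}$, is exactly the statement that $D'$ is a $\bigl(v,\tfrac{v-1}{4},\tfrac{v-5}{16}\bigr)$-difference set.

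The first step is the elementary bookkeeping identity
$$
d_{D'}(x)=d_D(x)-\mathbf{1}[\,d-x\in D\,]-\mathbf{1}[\,d+x\in D\,]\qquad(x\neq 0).
$$
It holds because the ordered pairs $(g,h)\in D\times D$ with $g-h=x$ that disappear when $d$ is removed are exactly those with $g=d$ (one such pair, $(d,\,d-x)$, precisely when $d-x\in D$) or with $h=d$ (one such pair, $(d+x,\,d)$, precisely when $d+x\in D$), and for $x\neq 0$ a single pair cannot be of both kinds. Hypothesis (1) forces the correction term to be $0$ or $1$: if $d-x=d_i$ and $d+x=d_j$ both lie in $D$ then $2d=d_i+d_j$, which is impossible for $i\neq j$ by (1), and which for $i=j$ gives $2x=0$, hence $x=0$ because $v$ is odd. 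Thus $d_{D'}(x)\in\{d_D(x),\,d_D(x)-1\}$ for all nonzero $x$.

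The heart of the argument is to pin down the set of $x$ on which the correction term equals $1$. For $x\neq 0$ we have $d-x\in D\iff x\in A:=\{\,d-d_i : 1\le i\le\tfrac{v-1}{4}\,\}$ and $d+x\in D\iff x\in B:=\{\,d_i-d : 1\le i\le\tfrac{v-1}{4}\,\}$; each of $A$ and $B$ has $\tfrac{v-1}{4}$ elements, neither contains $0$, and $A\cap B=\varnothing$ by the same $2d=d_i+d_j$ argument as above. Hence the correction term is $\mathbf{1}[x\in A\cup B]$ and $|A\cup B|=\tfrac{v-1}{2}$. Now hypothesis (2) controls which level set of $d_D$ each $d-d_i$ and each $d_i-d$ belongs to; combining this with the symmetry $d_D(x)=d_D(-x)$ (so that $B=-A$ lies on the same level set as $A$) and with the equality $|A\cup B|=\tfrac{v-1}{2}=|G\setminus(H\cup\{0\})|$, one forces $A\cup B=G\setminus(H\cup\{0\})$, the set on which $d_D$ takes its larger value $\tfrac{v-5}{16}+1$. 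Substituting back, $d_{D'}(x)=d_D(x)-\mathbf{1}[\,x\in G\setminus(H\cup\{0\})\,]$ equals $\tfrac{v-5}{16}$ for $x\in H$ (nothing is subtracted) and equals $\bigl(\tfrac{v-5}{16}+1\bigr)-1=\tfrac{v-5}{16}$ for $x\in G\setminus(H\cup\{0\})$. Hence $d_{D'}$ is constantly $\tfrac{v-5}{16}$, so $D'$ is the asserted difference set.

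I expect the matching step to be the main obstacle: deducing from the membership conditions in hypothesis (2), the disjointness supplied by (1), and the cardinality coincidence, that $A\cup B$ is the \emph{entire} complement of $H\cup\{0\}$ and not merely contained in it. The bookkeeping identity, the bound on the correction term, and the final substitution are all routine. It is worth stressing that both hypotheses are genuinely needed: (1) makes $A$ and $B$ disjoint and keeps the correction term $\le 1$, while (2) is what locates $A\cup B$ correctly inside $G\setminus\{0\}$.
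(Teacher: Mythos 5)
The paper gives no proof of this theorem at all --- it is quoted from \cite{AD} and followed only by a worked example --- so your proposal can only be measured against the argument that is evidently intended, and your skeleton is that argument: the identity $d_{D'}(x)=d_D(x)-\mathbf{1}[d-x\in D]-\mathbf{1}[d+x\in D]$, the use of (1) to make $A=\{d-d_i\}$ and $B=\{d_i-d\}$ disjoint and to cap the correction at $1$, and the count $|A\cup B|=\frac{v-1}{2}=|G\setminus(H\cup\{0\})|$ are all correct. Note also that once one knows $A\cup B\subseteq G\setminus(H\cup\{0\})$, the cardinality count you already performed finishes the proof, so the ``matching step'' you single out at the end as the main obstacle is not actually an obstacle.

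The genuine problem is the step you wave through with ``one forces $A\cup B=G\setminus(H\cup\{0\})$.'' Hypothesis (2) as printed does \emph{not} place $A$ in the complement of $H$: it says $d-d_i\in H$, i.e.\ $A\subseteq H$, the level set where $d_D$ takes its \emph{lower} value $\frac{v-5}{16}$. Taken at face value this would put corrections at points of $H$ and give $d_{D'}(x)=\frac{v-5}{16}-1$ there, ruining the conclusion; moreover it contradicts the symmetry $d_D(x)=d_D(-x)$ that you yourself invoke (since $B=-A$ lies in the same level set as $A$, the two halves of (2) cannot hold simultaneously), so no instance can satisfy (2) as printed and the statement is vacuous as written. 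The paper's own example reveals the intended condition: there $d=0$ and all of $\pm d_i$ lie in $G\setminus(H\cup\{0\})$, so the correct hypothesis is $d-d_i,\,d_i-d\in G\setminus(H\cup\{0\})$ (the printed ``$d-d_i\in H$'' is a slip, mirroring the earlier addition theorem where both differences must lie in $H$). Your argument is valid only if you make explicit that you use solely the $d_i-d$ half of (2), together with the symmetry of the level sets, to place $A=-B$ inside $G\setminus(H\cup\{0\})$, silently discarding the $d-d_i\in H$ clause; as written, the proposal neither does this openly nor flags the misprint, and your closing remark that (2) locates $A\cup B$ ``merely contained'' in the complement shows the hypothesis was misread rather than reconciled.
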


\begin{exa}{\rm
The set $D = \{0,1,2,5,15,17\}$ is $(21,6,1,10)$-ADS in $\mathbb{Z}_{21}$ and $H = \{3, 7, 8, 9, 10, 11, 12, 13, 14, 18\}$ is the subset of $\mathbb{Z}_{21} \setminus\{0\}$ such that $d_D(x) = 1$ for each $x \in H$ and $d_D(x) = 2$ for each $x \in \mathbb{Z}_{21} \setminus (H \hs \cup \{0\}) = \{1, 2, 4, 5, 6, 15, 16, 17, 19, 20\}$. Further, $0 \in D$ and satisfies the conditions of the above theorem. Thus, $ \{1,2,5,15,17\}$ is a $(21, 5, 1)$- difference set in $\mathbb{Z}_{21}$.}
\end{exa}

\subsection{Almost difference sets which are partial difference sets}
$\newline$ $\newline$
\indent Almost difference sets also arise as a special type of partial difference sets. A $k$-element subset $D$ of a group $G$ is called a $(v, k, \lambda, \mu)$-\textit{partial difference set} (PDS) if the difference function $d_D(x)$ takes on the value $\lambda$ for each nonzero $x \in D$ and takes on the value $\mu$ for each nonzero $x \notin D$. That is, for each $x \in G \setminus \{0\}$,
$$
d_D(x) =
\begin{cases}
\lambda , & \text{for }x \in D \\
\mu , & \text{for } x \in G\setminus D
\end{cases}
$$
Thus, any PDS with $| \lambda - \mu| = 1$ is an ADS.  Furthermore, we say that $D$ is regular if in addition $0 \notin D$ and $D = -D$ where $-D := \{-d \hs | \hs d \in D\}$. Partial difference sets have also been extensively studied as they are related to Schur rings, two-weight codes, strongly regular Cayley graphs, and partial geometries. For more information on partial difference sets, there is a thorough treatment given by Ma \cite{Ma94}. As stated above, a $(v, k, \lambda, \mu)$-PDS with $\mu = \lambda+1$ is an ADS. For a regular PDS of this type, we have the following characterization:

\begin{thm}\cite{Ma94}
Let $G$ be an abelian group of order $v$ and let $D$ be a nontrivial regular $(v, k, \lambda, \mu)$- partial difference set with $\mu = \lambda+1$, then up to complementation, one of the following cases occurs:
\begin{itemize}
\item $(v, k, \lambda, \mu) = (v, \frac{v-1}{2}, \frac{v-5}{4}, \frac{v-1}{4})$ where $v \equiv 1(\text{mod }4)$
\item $(v, k, \lambda, \mu) = (243, 22,1,2)$
\end{itemize}
\end{thm}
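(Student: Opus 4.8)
The plan is to pass to the group ring via Proposition~\ref{prop5} and read off the spectrum with characters. Since $D$ is regular, $0\notin D$ and $D(X^{-1})=D(X)$, so the $1\notin D$ case of Proposition~\ref{prop5} together with $\mu=\lambda+1$ gives, in $\mathbb{Z}[G]$,
\[
D(X)^{2}=(k-\mu)\,\underline{1}-D(X)+\mu\,G(X).
\]
The principal character yields the counting identity $k(k-1-\lambda)=\mu(v-1-k)$, while for a non-principal character $\chi$, Lemma~\ref{lem24} shows $\chi(D(X))$ is a root of $z^{2}+z-(k-\mu)=0$, so that
\[
\chi(D(X))\in\{\theta,\overline{\theta}\},\qquad \theta=\tfrac{-1+\Delta}{2},\quad \overline{\theta}=\tfrac{-1-\Delta}{2},\quad \Delta:=\sqrt{4(k-\mu)+1}.
\]
As $D=-D$, every value $\chi(D(X))$ is real, so the abelian Cayley graph with connection set $D$ is strongly regular with eigenvalues $k$, $\theta$, $\overline{\theta}$; note $\Delta^{2}$ is odd, and nontriviality rules out $\Delta=1$ (the complete graph), so $\Delta\geq 3$. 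The argument now splits according to whether $\Delta$ is rational.

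The easy half is $\Delta$ irrational. Then $\theta$ and $\overline{\theta}$ are conjugate over $\mathbb{Q}$, hence occur with equal multiplicity in the spectrum; since every non-principal character contributes one of them, each occurs $\tfrac{v-1}{2}$ times (so $v$ is odd), and the vanishing trace of the Cayley graph ($0\notin D$) gives $k+\tfrac{v-1}{2}(\theta+\overline{\theta})=k-\tfrac{v-1}{2}=0$, i.e.\ $k=\tfrac{v-1}{2}$. Substituting this and $\mu=\lambda+1$ into $k(k-1-\lambda)=\mu(v-1-k)$ forces $\mu=\tfrac{v-1}{4}$ and $\lambda=\tfrac{v-5}{4}$, and integrality of these values forces $v\equiv 1\pmod 4$ — this is the first case.

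When $\Delta$ is rational it is an odd integer $\geq 3$; write $\Delta=2r+1$ with $r\geq 1$, so $\theta=r$ and $\overline{\theta}=-(r+1)$ are integers with $k-\mu=r(r+1)$, and the factorization $z^{2}+z-r(r+1)=(z-r)(z+r+1)$ lifts to
\[
\bigl(D(X)-r\,\underline{1}\bigr)\bigl(D(X)+(r+1)\,\underline{1}\bigr)=\mu\,G(X)\quad\text{in }\mathbb{Z}[G].
\]
The principal character gives $\mu v=(\mu+r^{2})\bigl(\mu+(r+1)^{2}\bigr)$, so $\mu\mid r^{2}(r+1)^{2}$ and the whole parameter set is determined by $(r,\mu)$; when $\mu=r(r+1)$ one recovers $k=\tfrac{v-1}{2}$, $v=(2r+1)^{2}$ — the first case again — so we may assume $\mu\neq r(r+1)$.

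The hard part will be to prove that the only remaining pair $(r,\mu)$ — with $\mu\mid r^{2}(r+1)^{2}$ and $\mu\neq r(r+1)$ — for which a regular PDS exists in an abelian group is the one yielding $(243,22,1,2)$, realized for instance by the ternary Golay construction in the elementary abelian group $(\mathbb{Z}_{3})^{5}$, together with its complement $(243,220,199,200)$. The natural approach is to evaluate the factored identity at a character $\chi$ of each prime-power order $p^{i}$ dividing the exponent of $G$: since $\chi$ annihilates $G(X)$ one again obtains $\chi(D(X))\in\{r,-(r+1)\}$, and rationality of these integers forces rigid congruences on the intersection numbers $\lvert D\cap C\rvert$ over cosets $C$ of $\ker\chi$. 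Playing these conditions against one another over the whole subgroup lattice, and combining them with the standard feasibility conditions for strongly regular graphs and with the known fact that various small strongly regular graphs (for instance the Petersen and Hoffman--Singleton graphs, and the conjectural Moore graph of valency $57$) are not Cayley graphs of abelian groups, cuts the problem down to a short explicit list, which is then settled case by case. This lengthy case analysis is carried out in detail in \cite{Ma94}.
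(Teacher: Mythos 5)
Your first half is fine: the reduction via Proposition~\ref{prop5} to $D(X)^{2}=(k-\mu)\underline{1}-D(X)+\mu G(X)$, the character computation, and the dichotomy on whether $\sqrt{4(k-\mu)+1}$ is rational are all standard and correct, and the irrational case does force the Paley-type parameters $(v,\tfrac{v-1}{2},\tfrac{v-5}{4},\tfrac{v-1}{4})$ with $v\equiv 1\pmod 4$. Note, for calibration, that the paper itself offers no proof at all -- the theorem is simply quoted from \cite{Ma94} -- so there is no argument in the paper for you to have matched or diverged from; the question is whether your attempt stands on its own.

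It does not, because the integral-eigenvalue case is the entire content of the theorem and you do not prove it. After writing $\theta=r$, $\overline{\theta}=-(r+1)$ and the divisibility $\mu\mid r^{2}(r+1)^{2}$, you assert that coset-intersection congruences from characters of prime-power order, ``standard feasibility conditions,'' and known non-Cayley facts about Petersen, Hoffman--Singleton, etc.\ ``cut the problem down to a short explicit list,'' and then you defer that list to \cite{Ma94}. This is a genuine gap on two counts. First, deferring the crux to the very reference being proved is circular as a proof of the statement. Second, the proposed strategy would not obviously terminate: the numerical conditions you have extracted leave infinitely many feasible integral parameter sets (already $\mu=1$ gives the Moore-graph family $v=(1+r^{2})(1+(r+1)^{2})$, $k=r^{2}+r+1$ for every $r$, and each $\mu\mid r^{2}(r+1)^{2}$ gives further candidates), so no ``short explicit list'' emerges from feasibility alone, and excluding these as Cayley graphs of abelian groups is not a matter of quoting facts about a few small graphs. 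The actual elimination, due to Arasu, Jungnickel, Ma and Pott and surveyed in \cite{Ma94}, rests on the sub-difference-set (local multiplier) machinery for partial difference sets, which produces strong arithmetic constraints on $v$, $r$ and $\mu$ subgroup by subgroup; that machinery, and the resulting analysis singling out $(243,22,1,2)$, is precisely what is missing from your argument.
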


A partial difference set of the first parameter type is also said to be Paley type since the Paley partial difference set, the set of nonzero squares in the finite field $GF(q)$ where $q \equiv 1 (\text{mod }4)$ is a prime power, has those parameters.  Furthermore, regular PDSs are quite common. In fact, if $D$ is a $(v,k, \lambda, \mu)$-PDS with $\lambda \neq \mu$, then $D = -D$ \cite{Ma94}. Thus, every ADS $D$ coming as a PDS with $0 \notin D$ and $\mu = \lambda+1$ is characterized by the theorem above.

\section{Almost Difference Sets from Cyclotomic Classes of Finite Fields} \label{sec7}
Many examples to come follow the classical approach of combining certain cyclotomic classes within a finite field. This technique has been used to construct difference sets (families) \cite{Wi} \cite{St}, partial difference sets \cite{Ma94}, and external difference families \cite{CD}, just to name a few. In this section, we provide the background for these cyclotomic constructions.

Throughout this section $p$ will denote an odd prime, $q = p^{\alpha}$ will denote an odd prime power, and $G = (GF(q),+)$ (resp. $(GF(p),+)$) will denote the additive group of the finite field $GF(q)$ (resp. $GF(p)$).

\subsection{Cyclotomy and the cyclotomic numbers}
$\newline$ $\newline$
\indent Let $q = p^{\alpha} = ef+1$ for positive integers $e$ and $f$. Then for a fixed primitive element $\gamma$ of $GF(q)$ we define
$$
C_0^e = \langle \gamma^e \rangle = \{\gamma^{es} \hs| \hs 0 \leq s \leq f-1\} \hspace{2mm} \text{ and } \hspace{2mm} C_i^e = \gamma^iC_0^e \hspace{2mm} \text{ for } i = 0, 1, ..., e-1
$$

These cosets are called the \textit{cyclotomic classes of order $e$} with respect to $GF(q)$. We further note that they partition the nonzero elements of the field. That is,
$$
GF(q)^{\times} = \bigcup\limits_{i = 0}^{e-1}  C_i^e
$$
where $GF(q)^{\times}$ is the multiplicative group of $GF(q)$.

\indent Next, for integers $0 \leq i, j \leq e-1$, we define
$$
(i,j)_e : = |(C_i^e+1) \cap C_j^e|
$$

These constants $(i,j)_e$ are called the \textit{cyclotomic numbers of order $e$} in $GF(q)$. There are at most $e^2$ distinct constants of order $e$. Additionally, these constants depend on $q$ and $\gamma$, a subtlety which is not reflected in the notation. Formulas for the cyclotomic numbers of order $e = 2, 3,$ and $4$ are provided in the Appendix.

Though there are many cyclotomic numbers of a given order, these constants obey quite a few symmetry properties which greatly reduce the number of distinct values. We state these properties in the following lemma.

\begin{lem}\cite{St}\label{lem8} For the cyclotomic numbers of order $e$ defined above, we have the following relations:

\begin{enumerate}
\item $(i,j)_e = (i', j')_e$ where $i \equiv i' (\text{mod } e)$ and $j \equiv j' (\text{mod }e)$

\item$ (i,j)_e = (e-i,j-i)_e =
\left\{
	\begin{array}{ll}
		(j,i)_e  & \mbox{if } f ~\mbox{is even} \\
		(j+\frac{e}{2},i+\frac{e}{2})_e & \mbox{if } f ~\mbox{is odd}
	\end{array}
\right.$
\item $\displaystyle \sum_{j=0}^{e-1}(i,j)_e=f-\theta_i$ where $ \theta_i=
\left\{
	\begin{array}{ll}
		1 & \mbox{if } f ~\mbox{is even and}~ i=0 \\
		1 & \mbox{if } f ~\mbox{is odd and}~ i=\frac{e}{2}\\
		0 &\mbox{otherwise}
	\end{array}
\right.$
\item $\displaystyle \sum_{i=0}^{e-1}(i,j)_e=f-\theta_j$ where $ \theta_j=
\left\{
	\begin{array}{ll}
		1 & j=0 \\
		0 &\mbox{otherwise}
	\end{array}
\right.$
 \item $\displaystyle \sum_{i=0}^{e-1}(i,i+j)_e=f-\theta_j$ where $ \theta_j=
\left\{
	\begin{array}{ll}
		1 & j=0 \\
		0 &\mbox{otherwise}
	\end{array}
\right.$
\end{enumerate}
\end{lem}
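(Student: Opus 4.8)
The plan is to work throughout with the interpretation of $(i,j)_e=|(C_i^e+1)\cap C_j^e|$ as the number of $x\in C_i^e$ with $x+1\in C_j^e$, equivalently the number of ordered pairs $(x,y)\in C_i^e\times C_j^e$ with $x+1=y$. Property (1) is then immediate, since $C_0^e=\langle\gamma^e\rangle$ depends only on $e$ and $C_i^e=\gamma^iC_0^e$ depends only on $i\bmod e$; this also makes sense of the out-of-range indices $j-i$, $j+\tfrac e2$, etc.\ appearing in (2). One preliminary fact will be used repeatedly: since $-1=\gamma^{(q-1)/2}=\gamma^{ef/2}$ and $q$ is odd, reducing the exponent modulo $e$ shows $-1\in C_0^e$ when $f$ is even and $-1\in C_{e/2}^e$ when $f$ is odd (in the latter case $e$ is forced to be even). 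This is precisely the dichotomy recorded by $\theta_i$ in (3).

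For (2) I would exhibit explicit bijections of the solution set $X_{i,j}:=\{x\in C_i^e:x+1\in C_j^e\}$. Dividing $x+1=y$ by $x$ gives $x^{-1}+1=yx^{-1}$; since $x\mapsto x^{-1}$ is an involution carrying $C_i^e$ to $C_{-i}^e=C_{e-i}^e$ and sending $y\in C_j^e$ to $yx^{-1}\in C_{j-i}^e$, it restricts to a bijection $X_{i,j}\to X_{e-i,\,j-i}$, proving $(i,j)_e=(e-i,j-i)_e$. Negating $x+1=y$ gives $(-y)+1=(-x)$; the map $x\mapsto-(x+1)=-y$ is again an involution, and since multiplying by $-1$ shifts the cyclotomic index by $0$ or $\tfrac e2$ according to the parity of $f$, it restricts to a bijection $X_{i,j}\to X_{j,i}$ when $f$ is even and $X_{i,j}\to X_{j+e/2,\,i+e/2}$ when $f$ is odd. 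That yields the second half of (2).

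Properties (3), (4), (5) all come from summing over a partition. Because $\{C_j^e\}_{j}$ partitions $GF(q)^{\times}$, summing $(i,j)_e$ over $j$ counts the nonzero elements of $C_i^e+1$, which is $|C_i^e|$ unless $0\in C_i^e+1$, i.e.\ unless $-1\in C_i^e$; by the preliminary fact this equals $f-\theta_i$, giving (3). Dually, the translates $C_i^e+1$ for $i=0,\dots,e-1$ partition $GF(q)\setminus\{1\}$, so summing $(i,j)_e$ over $i$ counts the elements of $C_j^e$ other than $1$, namely $f-1$ if $j=0$ and $f$ otherwise; that is (4). Finally (5) follows by combining (1), (2) and (4): the first identity in (2) gives $(i,i+j)_e=(e-i,\,j)_e$, and as $i$ runs through a complete residue system mod $e$ so does $e-i$, hence $\sum_{i}(i,i+j)_e=\sum_{i'}(i',j)_e=f-\theta_j$. (Alternatively (5) can be proved directly by noting that $\sum_i(i,i+j)_e$ counts $x$ with $(x+1)/x\in C_j^e$ and that $x\mapsto 1+x^{-1}$ is a bijection from $GF(q)\setminus\{0,-1\}$ onto $GF(q)\setminus\{0,1\}$.)

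None of these steps involves a genuinely hard computation; the only real care needed is bookkeeping around the three exceptional elements $0$, $1$, and $-1$ of $GF(q)$ — above all, correctly identifying the cyclotomic class of $-1$ as a function of the parity of $f$, since a slip there corrupts the $\theta$-terms in (2), (3) and (5) — and checking that each map above genuinely restricts to a bijection between the stated solution sets rather than merely a map of ambient sets.
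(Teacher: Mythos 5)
Your proof is correct: the bijections $x\mapsto x^{-1}$ and $x\mapsto-(x+1)$ for (2), the placement of $-1$ in $C_0^e$ or $C_{e/2}^e$ according to the parity of $f$, and the partition-counting for (3)--(5) all check out, including the reduction of (5) to (4) via $(i,i+j)_e=(e-i,j)_e$. The paper itself gives no proof of this lemma (it is quoted from Storer's book), and your argument is essentially the standard derivation found in that source, so there is nothing further to reconcile.
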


Now let $G$ denote the additive group of $GF(q)$ and consider the subset $D = C_i^e$. To determine whether $C_i^e$ qualifies as an ADS, by Proposition \ref{prop4}, it suffices to compute $C_i^e(X)C_i^e(X^{-1})$ in the group ring $\mathbb{Z}[G]$. Therefore, we need to determine $-C_i^e := \{ -x \hs | \hs x \in C_i^e \}$ since $C_i^e(X^{-1}) = -C_i^e(X)$. This set is described in the following lemma.

\begin{lem}\label{lem9}
$$
-C_i^e := \{-x \hs | \hs x \in C_i^e\} =
\begin{cases}
C_i^e \text{ if } f \text{ is even } \\
C_{i + \frac{e}{2}}^e \text{ if } f \text{ is odd }
\end{cases}
$$
\end{lem}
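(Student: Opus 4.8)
The plan is to locate the element $-1$ among the cyclotomic classes and then exploit the coset structure $C_i^e = \gamma^i C_0^e$, where $C_0^e = \langle \gamma^e\rangle$ is a multiplicative subgroup of $GF(q)^{\times}$. Since $-C_i^e = (-1)\cdot C_i^e$, once we know $-1 \in C_j^e$ for the appropriate $j$, we obtain $(-1)\gamma^i C_0^e = \gamma^i\big((-1)C_0^e\big) = \gamma^{i+j}C_0^e = C_{i+j}^e$; here we used that $(-1)C_0^e$ is again a coset of $C_0^e$, and since it contains $-1$ it equals $C_j^e = \gamma^j C_0^e$ (all class indices read modulo $e$).

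First I would record that $\gamma$ has multiplicative order $q-1 = ef$, and that $q-1$ is even because $q$ is an odd prime power. Hence $\gamma^{(q-1)/2} = \gamma^{ef/2}$ is the unique element of order $2$ in the cyclic group $GF(q)^{\times}$, namely $-1$. So $-1 = \gamma^{ef/2}$, and it remains only to compute $ef/2$ modulo $e$.

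If $f$ is even, then $ef/2 = e(f/2)$ is a multiple of $e$, so $-1 \in C_0^e$, and the coset identity above gives $-C_i^e = C_i^e$. If $f$ is odd, then $e$ must be even (since $ef = q-1$ is even), so $e/2$ is a positive integer; moreover $ef/2 = (e/2)(f-1) + e/2$ with $(e/2)(f-1)$ a multiple of $e$ because $f-1$ is even, so $ef/2 \equiv e/2 \ (\text{mod } e)$. Thus $-1 \in C_{e/2}^e$, and the coset identity gives $-C_i^e = C_{i+e/2}^e$.

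There is no genuine obstacle here; the only points that need a little care are the bookkeeping of exponents modulo $e$ versus modulo $q-1$, and the remark that $f$ odd forces $e$ even so that the class $C_{e/2}^e$ is even defined. One could equally avoid cosets altogether by writing $C_i^e = \{\gamma^{es+i} : 0 \le s \le f-1\}$ and noting $-\gamma^{es+i} = \gamma^{es+i+ef/2}$, then reducing the exponent modulo $ef$; both routes are equally short.
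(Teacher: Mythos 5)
Your proposal is correct and follows essentially the same route as the paper: write $-1 = \gamma^{(q-1)/2} = \gamma^{ef/2}$, reduce the exponent modulo $e$ according to the parity of $f$, and multiply the coset $C_i^e = \gamma^i C_0^e$ by this element. The only difference is that you spell out a couple of small points the paper leaves implicit (why $\gamma^{ef/2} = -1$, and that $f$ odd forces $e$ even), which is fine.
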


\begin{proof}
Since $q = ef+1$ is odd, it follows that $q-1 = ef$ is even. Furthermore,
$$
-1 = \gamma^{\frac{q-1}{2}} = \gamma^{\frac{ef}{2}} =
\begin{cases}
\gamma^{e\big(\frac{f}{2}\big)} \text{ if } f \text{ is even }\\
\gamma^{e\big(\frac{f-1}{2}\big) + \frac{e}{2}} \text{ if } f \text{ is odd }
\end{cases}
$$
Thus, we have that
$$
-C_i^e := \{-x \hs | \hs x \in C_i^e\} =
\begin{cases}
\left(\gamma^{e\big(\frac{f}{2}\big)}\right)C_i^e = C_i^e \text{ if } f \text{ is even } \\
\left(\gamma^{e\big(\frac{f-1}{2}\big) + \frac{e}{2}}\right)C_i^e = C_{i + \frac{e}{2}}^e \text{ if } f \text{ is odd }
\end{cases}
$$
\end{proof}

\indent We are now ready to draw the parallel between cyclotomic numbers and almost difference sets. Their relationship is established in the following lemma due to Storer \cite{St}.

\begin{lem} \label{lem10}
Let $G$ denote the additive group of the finite field $GF(q)$. Then in the group ring $\mathbb{Z}[G]$,
$$
C_i^e(X)C_j^e(X) = a_{ij}\cdot \underline{1} + \sum_{k = 0}^{e-1}(j-i, k-i)_eC_k^e(X)
$$
where
$$
a_{ij} =
\begin{cases}
f \text{ if } f \text{ is even and } j = i \\
f \text{ if } f \text{ is odd and } j = i+\frac{e}{2} \\
0 \text{ otherwise }
\end{cases}
$$
\end{lem}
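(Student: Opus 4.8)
The plan is to expand the left-hand side directly in $\mathbb{Z}[G]$ and read off coefficients. Writing $C_i^e(X)C_j^e(X) = \sum_{g \in G} N(g)\,X^g$, the coefficient $N(g)$ is the number of ordered pairs $(x,y) \in C_i^e \times C_j^e$ with $x+y = g$. It then suffices to show that $N(0) = a_{ij}$ and that $N(g) = (j-i,\,k-i)_e$ whenever $g \in C_k^e$.

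First I would handle $g = 0$. Here $x + y = 0$ forces $y = -x$, so $N(0) = |(-C_i^e) \cap C_j^e|$. By Lemma~\ref{lem9}, $-C_i^e$ equals $C_i^e$ if $f$ is even and $C_{i+e/2}^e$ if $f$ is odd; since distinct cyclotomic classes are disjoint and each has size $f$, we get $N(0) = f$ precisely when $C_j^e$ is that class and $N(0) = 0$ otherwise, which is exactly the definition of $a_{ij}$.

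Next, fix a nonzero $g \in C_k^e$. The key normalization is to divide the equation $x+y = g$ by $g$: since each $C_a^e$ is a coset of the multiplicative subgroup $C_0^e = \langle \gamma^e\rangle$ and $g^{-1} \in C_{-k}^e$, multiplication by $g^{-1}$ carries $C_a^e$ bijectively onto $C_{a-k}^e$. Hence $N(g)$ equals the number of pairs $(u,v) \in C_{i-k}^e \times C_{j-k}^e$ with $u+v = 1$. Substituting $a = -u$ and applying Lemma~\ref{lem9} once more to write $-C_{i-k}^e = C_m^e$ as a single class, this count becomes $|(C_m^e + 1) \cap C_{j-k}^e| = (m,\,j-k)_e$, where $m = i-k$ when $f$ is even and $m = i-k+\frac{e}{2}$ when $f$ is odd. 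It then remains to simplify $(m,\,j-k)_e$ using the symmetry relations of Lemma~\ref{lem8}(1),(2): in the even case one chain is $(i-k,\,j-k)_e = (k-i,\,j-i)_e = (j-i,\,k-i)_e$, using $(a,b)_e = (e-a,\,b-a)_e$ and then $(a,b)_e = (b,a)_e$; in the odd case a similar but slightly longer chain, ending with $(a,b)_e = (b+\frac{e}{2},\,a+\frac{e}{2})_e$, again yields $(j-i,\,k-i)_e$. Thus both parity cases collapse to the same parity-free formula, and assembling $N(0)$ with the $N(g)$ for $g \in C_k^e$ gives the stated identity.

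The genuine work, and the only place errors are likely, is this last reconciliation: faithfully tracking indices modulo $e$ through the ``divide by $g$'' bijection and through Lemma~\ref{lem9}, and then selecting the correct sequence of symmetry relations so that the $f$-even and $f$-odd computations produce the identical clean value $(j-i,\,k-i)_e$. Everything upstream, namely the expansion into $\sum N(g)\,X^g$, the $g=0$ count, and the normalization bijection, is routine once one notes that the $C_a^e$ are cosets of $C_0^e$.
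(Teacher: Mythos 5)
Your proof is correct, and it follows the same basic strategy as the paper (count the coefficient of each $g$ in $C_i^e(X)C_j^e(X)$ as a number of solutions of $x+y=g$, normalize by a multiplicative shift, and finish with the symmetry relations of Lemma \ref{lem8}), but the normalization you choose is different in a way worth noting. The paper divides the equation $\gamma^{es+i}+\gamma^{et+j}=\gamma^{er+k}$ by the summand $y=\gamma^{et+j}\in C_j^e$, which turns the count directly into $(i-j,k-j)_e$ with no parity distinction, and then a two-step application of Lemma \ref{lem8}(2),(1) gives $(j-i,k-i)_e$; Lemma \ref{lem9} is never needed for the nonzero coefficients. You instead divide by the target $g\in C_k^e$ and then eliminate the minus sign via Lemma \ref{lem9}, which forces the even/odd case split and the two separate symmetry chains. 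Your even chain is right, and the odd chain you left implicit does close: $(i-k+\tfrac{e}{2},\,j-k)_e=(k-i+\tfrac{e}{2},\,j-i+\tfrac{e}{2})_e=(j-i,\,k-i)_e$, using $(a,b)_e=(e-a,b-a)_e$ and then $(a,b)_e=(b+\tfrac{e}{2},a+\tfrac{e}{2})_e$. So what your route costs in bookkeeping it partly repays in completeness: you verify the identity coefficient $N(0)=a_{ij}$ explicitly (exactly where Lemma \ref{lem9} genuinely is needed), a step the paper's proof leaves tacit, whereas the paper's choice of dividing by $y$ buys a shorter, parity-free treatment of the off-identity coefficients.
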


\begin{proof}
The number of solutions to the equation
$$
\gamma^{es+i} + \gamma^{et+j} = \gamma^{er+k}
$$
is the number of solutions to the equation
$$
\gamma^{e(s-t)+(i-j)}+1 = \gamma^{e(r-t)+(k-j)}
$$
which is precisely the cyclotomic number $(i-j, k-j)_e$. Finally, by properties $(1)$ and $(2)$ of Lemma \ref{lem8}, we have that
$$
(i-j, k-j)_e \stackrel{(2)}{=} (e-(i-j), (k-j)-(i-j))_e \stackrel{(1)}{=} (j-i, k-i)_e
$$
Hence,
$$
C_i^e(X)C_j^e(X) = a_{ij}\cdot \underline{1} + \sum_{k = 0}^{e-1}(j-i, k-i)_eC_k^e(X)
$$
where
$$
a_{ij} =
\begin{cases}
f \text{ if }f \text{ is even and } j = i \\
f \text{ if }f \text{ is odd and } j = i+\frac{e}{2} \\
0 \text{ otherwise }
\end{cases}
$$
as desired.
 \end{proof}

Finally, combining Lemmas \ref{lem9} and \ref{lem10} above, we have the following corollaries which completely characterize the distribution of differences when $D$ is obtained as a union of cyclotomic classes.

\begin{cor}\label{cor38}
Let $G$ denote the additive group of the finite field $GF(q)$ and consider the subset $D = C_i^e$. Then in the group ring $\mathbb{Z}[G]$, we have
\begin{eqnarray*}
D(X)D(X^{-1}) &=& C_i^e(X)C_i^e(X^{-1}) \\
&=& C_i^e(X)\left(-C_i^e(X) \right) \\
&=&
\begin{cases}
C_i^e(X)C_i^e(X) = f\cdot \underline{1}+\sum\limits_{k=0}^{e-1}(0,k-i)_eC_k(X) \text{ if } f \text{ is even } \\ \\
C_i^e(X)C_{i+\frac{e}{2}}(X) = f \cdot \underline{1} + \sum\limits_{k = 0}^{e-1}(\frac{e}{2}, k-i) C_k(X) \text{ if } f \text{ is odd }
\end{cases}
\end{eqnarray*}
\end{cor}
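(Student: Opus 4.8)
The plan is to obtain the identity by a direct substitution, combining Lemma \ref{lem9} with Lemma \ref{lem10}; no computation beyond these two lemmas is needed. The first step is to replace the factor $C_i^e(X^{-1})$ by a simple quantity: since $C_i^e(X^{-1}) = \sum_{x \in C_i^e} X^{-x} = \sum_{y \in -C_i^e} X^{y}$, this group ring element is precisely the one associated to the set $-C_i^e$. Applying Lemma \ref{lem9}, we get $C_i^e(X^{-1}) = C_i^e(X)$ when $f$ is even and $C_i^e(X^{-1}) = C_{i+\frac{e}{2}}^e(X)$ when $f$ is odd. Hence $D(X)D(X^{-1})$ equals the product $C_i^e(X)C_i^e(X)$ in the even case and the product $C_i^e(X)C_{i+\frac{e}{2}}^e(X)$ in the odd case, which is exactly the second equality displayed in the statement.

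The second step is to evaluate each of these two products using Lemma \ref{lem10}. In the even case I would invoke that lemma with $j = i$: the hypothesis ``$f$ even and $j = i$'' is met, so the constant term is $a_{ii} = f$, and the coefficient of $C_k^e(X)$ is $(j-i,\, k-i)_e = (0,\, k-i)_e$, producing the first branch. In the odd case I would invoke it with $j = i + \frac{e}{2}$: the hypothesis ``$f$ odd and $j = i + \frac{e}{2}$'' is met, so again the constant term is $f$, and the coefficient of $C_k^e(X)$ is $(j-i,\, k-i)_e = (\frac{e}{2},\, k-i)_e$, producing the second branch. All subscripts of the cyclotomic classes and all entries of the cyclotomic numbers are read modulo $e$, consistently with part (1) of Lemma \ref{lem8}.

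The only point that needs care — rather than a genuine obstacle — is ensuring that the hypotheses of Lemma \ref{lem10} are matched so that the constant term comes out to be $f$ and not $0$ in each parity. This is automatic: Lemma \ref{lem9} hands us exactly $j = i$ in the even case and $j = i + \frac{e}{2}$ in the odd case, which are precisely the two situations in which $a_{ij} = f$. Thus the two lemmas dovetail and the corollary follows immediately.
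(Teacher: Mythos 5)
Your proposal is correct and is exactly the argument the paper intends: the corollary is stated there as an immediate consequence of combining Lemma \ref{lem9} (to identify $C_i^e(X^{-1})$ with $C_i^e(X)$ or $C_{i+\frac{e}{2}}^e(X)$ according to the parity of $f$) with Lemma \ref{lem10} applied at $j=i$ and $j=i+\frac{e}{2}$, which are precisely the cases where $a_{ij}=f$. Your write-up simply makes that substitution explicit, so there is nothing to add.
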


\begin{cor}
Let $G$ denote the additive group of the finite field $GF(q)$ and consider the subset $D = \bigcup_{i \in \mc{I}} C_i^e$ for some $\mc{I} \subset \{0, 1, \dots, e-1\}$. Then in the group ring $\mathbb{Z}[G]$, we have

\begin{eqnarray*}
D(X)D(X^{-1}) &=& \sum\limits_{i \in \mc{I}} \sum\limits_{j \in \mc{I}} C_i^e(X)C_j^e(X^{-1}) \\
&=& \begin{cases}
f|\mc{I}|\cdot \underline{1} + \sum\limits_{k = 0}^{e-1}\left(\sum\limits_{i \in \mc{I}}\sum\limits_{j \in \mc{I}} (j-i,k-i)_e\right) C_k^e(X) \text{ if } f \text{ is even } \\ \\
f|\mc{I}| \cdot \underline{1} +\sum\limits_{k = 0}^{e-1}\left(\sum\limits_{i \in \mc{I}}\sum\limits_{j \in \mc{I}} (j+\frac{e}{2}-i,k-i)_e\right) C_k^e(X) \text{ if } f \text{ is odd }
\end{cases}
\end{eqnarray*}
\end{cor}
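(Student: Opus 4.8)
The plan is to reduce the computation to the single‑class case already handled in Lemma~\ref{lem10} (equivalently Corollary~\ref{cor38}) by expanding the product bilinearly over $\mathbb{Z}[G]$. First I would write $D(X)=\sum_{i\in\mc{I}}C_i^e(X)$ and $D(X^{-1})=\sum_{j\in\mc{I}}C_j^e(X^{-1})$ and distribute, obtaining
$$D(X)D(X^{-1})=\sum_{i\in\mc{I}}\sum_{j\in\mc{I}}C_i^e(X)C_j^e(X^{-1}),$$
which is the first displayed equality and uses nothing beyond the ring axioms and the definition of the simple quantity $D(X)$.

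Next I would eliminate the inverse via Lemma~\ref{lem9}: the simple quantity of $-C_j^e$ is $C_j^e(X)$ if $f$ is even and $C_{j+\frac{e}{2}}^e(X)$ if $f$ is odd, so $C_j^e(X^{-1})$ equals one of these. In the even case each term of the double sum is $C_i^e(X)C_j^e(X)$, and Lemma~\ref{lem10} gives $C_i^e(X)C_j^e(X)=a_{ij}\cdot\underline{1}+\sum_{k=0}^{e-1}(j-i,k-i)_e\,C_k^e(X)$ with $a_{ij}=f$ when $j=i$ and $a_{ij}=0$ otherwise. Summing over $i,j\in\mc{I}$, only the diagonal terms contribute to the constant, giving $\sum_{i\in\mc{I}}f=f|\mc{I}|$, and interchanging the order of summation collects the coefficient of each $C_k^e(X)$ as $\sum_{i\in\mc{I}}\sum_{j\in\mc{I}}(j-i,k-i)_e$; this is exactly the first branch. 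For $f$ odd I would instead apply Lemma~\ref{lem10} with the second index $j$ replaced by $j+\frac{e}{2}$, so that $C_i^e(X)C_{j+\frac{e}{2}}^e(X)=a_{i,j+\frac{e}{2}}\cdot\underline{1}+\sum_{k=0}^{e-1}(j+\frac{e}{2}-i,k-i)_e\,C_k^e(X)$; since $f$ is odd, the condition giving $a_{i,j+\frac{e}{2}}=f$ is $j+\frac{e}{2}=i+\frac{e}{2}$, i.e.\ again $j=i$, so the constant again sums to $f|\mc{I}|$ and the coefficient of $C_k^e(X)$ becomes $\sum_{i\in\mc{I}}\sum_{j\in\mc{I}}(j+\frac{e}{2}-i,k-i)_e$, the second branch.

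The only delicate point — what I would call the main obstacle, though it is bookkeeping rather than a conceptual difficulty — is handling the subscripts modulo $e$: after the substitution $j\mapsto j+\frac{e}{2}$ one should invoke property (1) of Lemma~\ref{lem8} to read $(j+\frac{e}{2}-i,k-i)_e$ as an honest cyclotomic number when $j+\frac{e}{2}\ge e$, and one must check that this substitution leaves the "diagonal in $j$" description of $a_{i,j+\frac{e}{2}}$ intact rather than introducing a spurious shift. Once that is verified, both cases follow immediately from the double summation.
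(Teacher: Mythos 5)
Your proposal is correct and follows essentially the same route the paper intends: the corollary is stated as an immediate consequence of combining Lemma \ref{lem9} (to replace $C_j^e(X^{-1})$ by $C_j^e(X)$ or $C_{j+\frac{e}{2}}^e(X)$) with Lemma \ref{lem10}, exactly as you do, and your bookkeeping of the diagonal constant $f|\mc{I}|$ and the collected coefficients of $C_k^e(X)$ is accurate in both parity cases. No gaps.
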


\subsection{Low order cyclotomic constructions}
$\newline$

Let $q = p^{\alpha} = ef+1$. In this section, we find all the almost difference sets in $G = (GF(q), +)$ of the form $D = \cup_{i \in \mc{I}} C_i^e$ or $D = \cup_{i \in \mc{I}}C_i^e \cup \{0\}$, where $\mc{I} \subset \{0, 1, \dots, e-1\}$, for orders $e = 2, 3$, and $4$. 

First note that, by Theorem \ref{comp} in Section \ref{nec}, we only need to consider $\mc{I}$ with $|\mc{I}| \leq \lfloor \frac{e}{2} \rfloor$. 

Next, we fully characterize when $C_i^e$, $C_i^e \cup \{0\}$, and $C_i \cup C_j$, for $0 \leq i \neq j \leq e-1$, provide almost difference sets. 

\begin{thm}\label{Ci}
Let $D = C_i^e$ for some $i \in \{0, 1, \dots e-1\}$. $D$ is an almost difference set in $G$ with parameters $(q, f, \lambda, t)$  if and only if 
$$
\begin{cases}
\left \{(0,j)_e \hs | \hs 0 \leq j \leq e-1\right \} = \{\lambda, \lambda +1\},  & \text{if } f \text{ is even} \\
 \{(\frac{e}{2},j)_e \hs | \hs 0 \leq j \leq e-1 \} = \{\lambda, \lambda +1\},  & \text{if } f \text{ is odd} 
\end{cases}
$$
\end{thm}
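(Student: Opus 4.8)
The plan is to apply Corollary~\ref{cor38} directly, since it already computes $D(X)D(X^{-1})$ for $D = C_i^e$, and then match the result against the group ring characterization of an ADS in Proposition~\ref{prop4}. First I would invoke Corollary~\ref{cor38} to write, in $\mathbb{Z}[G]$,
$$
D(X)D(X^{-1}) = f\cdot \underline{1} + \sum_{k=0}^{e-1} c_k C_k^e(X),
$$
where $c_k = (0, k-i)_e$ when $f$ is even and $c_k = (\frac{e}{2}, k-i)_e$ when $f$ is odd. Observe that the map $k \mapsto k - i$ (mod $e$) permutes $\{0,1,\dots,e-1\}$, so as $k$ ranges over all residues the multiset $\{c_k\}$ equals the multiset $\{(0,j)_e : 0 \le j \le e-1\}$ (resp.\ $\{(\frac{e}{2},j)_e : 0 \le j \le e-1\}$); by Lemma~\ref{lem8}(1) each $c_k$ depends only on $k-i \bmod e$.

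Next I would rewrite this expression in terms of $G(X)$. Since $G = \{0\} \cup \bigcup_{j=0}^{e-1} C_j^e$ and this is a disjoint union, we have $G(X) = \underline{1} + \sum_{j=0}^{e-1} C_j^e(X)$, hence $\sum_{k=0}^{e-1} C_k^e(X) = G(X) - \underline{1}$. The key step is then to extract an ``$S(X)$'' term: comparing with the required form $D(X)D(X^{-1}) = k\cdot\underline{1} + \lambda S(X) + (\lambda+1)(G(X) - S(X) - \underline{1})$ from Proposition~\ref{prop4} (here $k = |D| = f$), we need every coefficient $c_k$ to lie in $\{\lambda, \lambda+1\}$, with $S$ being the union of those $C_k^e$ for which $c_k = \lambda$. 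Concretely, I would show: if $\{c_k : 0 \le k \le e-1\} \subseteq \{\lambda, \lambda+1\}$, set $S = \bigcup_{k : c_k = \lambda} C_k^e$; then $\sum_k c_k C_k^e(X) = \lambda S(X) + (\lambda+1)(G(X) - \underline{1} - S(X))$, which gives the ADS characterization with $t = |S|$. Conversely, if $D$ is an ADS, Proposition~\ref{prop4} forces $D(X)D(X^{-1}) - f\cdot\underline{1}$ to have all coefficients in $\{\lambda,\lambda+1\}$ when expanded in the basis $\{X^g\}$; since each $C_k^e(X)$ is a $0$/$1$ simple quantity on disjoint supports, this means each $c_k \in \{\lambda,\lambda+1\}$, and (provided the ADS is nontrivial so both values actually occur) $\{c_k\} = \{\lambda,\lambda+1\}$ as a set. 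Translating back via the permutation $k \mapsto k-i$ yields the stated condition on $\{(0,j)_e\}$ or $\{(\frac{e}{2},j)_e\}$.

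I expect the main subtlety to be the bookkeeping in the converse direction: one must be careful that ``$c_k \in \{\lambda,\lambda+1\}$ for all $k$'' is genuinely equivalent to the existence of a $t$-subset $S$ with the Proposition~\ref{prop4} identity, and in particular that $S$ is forced to be a union of cyclotomic classes rather than an arbitrary subset of $G$. This follows because the cyclotomic classes $C_0^e, \dots, C_{e-1}^e$ together with $\{0\}$ form a partition of $G$, so the coefficient of $X^g$ in $D(X)D(X^{-1})$ is constant across each class $C_k^e$; hence the only way to write the difference as $\lambda S(X) + (\lambda+1)(G(X)-\underline{1}-S(X))$ is to take $S$ to be the union of the classes where the coefficient is $\lambda$. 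A minor point worth flagging is the degenerate case where all $c_k$ are equal, in which case $D$ is actually a difference set (or its complement is trivial) rather than a genuine ADS, so the statement should be read with the understanding that $t \ne 0, q-1$; I would note this explicitly to keep the ``if and only if'' clean.
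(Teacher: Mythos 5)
Your proposal is correct and follows essentially the same route as the paper, which proves Theorem~\ref{Ci} simply as ``an immediate consequence of Corollary~\ref{cor38}'' combined with Proposition~\ref{prop4}; you have merely written out the bookkeeping (the shift $k\mapsto k-i$, the fact that coefficients are constant on cyclotomic classes, and the degenerate case) that the paper leaves implicit.
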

\begin{proof}
This is an immediate consequence of Corollary \ref{cor38}.
\end{proof}

\begin{thm} \label{Ci0}
Let $D = C_i^e \cup \{0\}$ for some $i \in \{0, 1, \dots e-1\}$. $D$ is an almost difference set in $G$ with parameters $(q, f+1, \lambda, t)$  if and only if 
$$
\begin{cases}
\{(0,0)_e+2\} \cup \{(0,j)_e \hs | \hs 1\leq j \leq e-1\} = \{\lambda, \lambda +1\},  & \text{if } f \text{ is even} \\
\{(\frac{e}{2},0)_e+1, (\frac{e}{2}, \frac{e}{2})_e+1\}\cup \{(\frac{e}{2},j)_e \hs | \hs 0 \leq j \leq e-1, j \neq 0, \frac{e}{2}\} = \{\lambda, \lambda +1\},  & \text{if } f \text{ is odd} 
\end{cases}
$$
\end{thm}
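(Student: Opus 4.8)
The plan is to expand $D(X)D(X^{-1})$ in the group ring $\mathbb{Z}[G]$ using $D(X) = C_i^e(X) + \underline{1}$, collect the coefficients of each cyclotomic class, and then match the result against the group‑ring characterization of an ADS in Proposition \ref{prop4}. Concretely, I would first write
$$
D(X)D(X^{-1}) = C_i^e(X)C_i^e(X^{-1}) + C_i^e(X) + C_i^e(X^{-1}) + \underline{1},
$$
and then substitute the value of $C_i^e(X)C_i^e(X^{-1})$ from Corollary \ref{cor38} together with the description of $C_i^e(X^{-1}) = (-C_i^e)(X)$ from Lemma \ref{lem9}, splitting into the two cases according to the parity of $f$. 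When $f$ is even we have $C_i^e(X^{-1}) = C_i^e(X)$, so the two cross terms contribute $2C_i^e(X)$; when $f$ is odd we have $C_i^e(X^{-1}) = C_{i+e/2}^e(X)$, so the cross terms contribute $C_i^e(X) + C_{i+e/2}^e(X)$ (note $e$ is even in this case, since $q-1 = ef$ is even, so $\tfrac{e}{2}$ and the classes $C_{i+e/2}^e$ make sense).

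Next I would rewrite the outcome in the form $(f+1)\underline{1} + \sum_{k=0}^{e-1} c_k C_k^e(X)$. The key observations are that the identity of $G$ receives coefficient $f+1 = |D|$, and that all elements inside one fixed cyclotomic class $C_k^e$ receive the same coefficient $c_k$. It then remains to read off the $c_k$: in the even case $c_k = (0,k-i)_e$ for $k\neq i$ while $c_i = (0,0)_e + 2$, and as $k$ runs over $\{0,\dots,e-1\}\setminus\{i\}$ the residue $k-i$ runs over $\{1,\dots,e-1\}$ modulo $e$, so the multiset $\{c_k\}$ equals $\{(0,0)_e+2\}\cup\{(0,j)_e \mid 1\le j\le e-1\}$; in the odd case, after reducing indices modulo $e$ via property (1) of Lemma \ref{lem8}, one finds $c_i = (\tfrac{e}{2},0)_e + 1$, $c_{i+e/2} = (\tfrac{e}{2},\tfrac{e}{2})_e + 1$, and $c_k = (\tfrac{e}{2},k-i)_e$ otherwise, and the remaining residues $k-i$ sweep out $\{1,\dots,e-1\}\setminus\{\tfrac{e}{2}\}$, giving exactly the multiset displayed in the statement. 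Finally I would apply Proposition \ref{prop4}: a group‑ring element of the form $(f+1)\underline{1} + \sum_k c_k C_k^e(X)$ is the autocorrelation $D(X)D(X^{-1})$ of a genuine $(q,f+1,\lambda,t)$-ADS (with $0 < t < v-1$) precisely when every $c_k$ lies in $\{\lambda,\lambda+1\}$ and both values actually occur among them — i.e. when the set (not just multiset) of the $c_k$ equals $\{\lambda,\lambda+1\}$, with $t$ then forced to be $f$ times the number of classes with $c_k = \lambda$. This is exactly the asserted equivalence.

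I do not expect a genuine obstacle: all the substance is already packaged in Corollary \ref{cor38} and Lemma \ref{lem9}, and the proof is essentially a bookkeeping exercise. The only place demanding care is precisely that bookkeeping — tracking which single cyclotomic class absorbs the extra $+2$ in the even case and which two classes absorb the extra $+1$'s in the odd case, and verifying that the shifted indices $k-i$ run over the claimed residue sets — together with the small but important point that the \emph{set} equality $\{\lambda,\lambda+1\}$ is what rules out the degenerate possibility that all $c_k$ coincide (which would make $D$ a difference set rather than an ADS with $0<t<v-1$).
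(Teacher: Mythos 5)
Your proposal is correct and follows essentially the same route as the paper: expand $(C_i^e(X)+\underline{1})(C_i^e(X^{-1})+\underline{1})$, substitute Corollary \ref{cor38} (with Lemma \ref{lem9} handling the cross terms by parity of $f$), and read off the class coefficients against Proposition \ref{prop4}. Your extra bookkeeping about which classes absorb the $+2$ (even case) or the two $+1$'s (odd case) is exactly the detail the paper leaves implicit.
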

\begin{proof}
In the group ring $\mathbb{Z}[G]$, 
\begin{eqnarray*}
D(X)D(X^{-1}) &=& (C_i(X)+\underline{1})(C_i(X^{-1}+\underline{1}) \\
&=& C_i(X)C_i(X^{-1})+C_i(X)+C_i(X^{-1})+\underline{1} \\
&=&
\begin{cases}
 \left [f\cdot \underline{1}+\sum\limits_{k=0}^{e-1}(0,k-i)_eC_k(X)\right]+2C_i(X)+ \underline{1},& \text{ if } f \text{ is even } \\ 
\left [ f \cdot \underline{1} + \sum\limits_{k = 0}^{e-1}(\frac{e}{2}, k-i) C_k(X) \right]+C_i(X)+C_{\frac{e}{2}+i}(X) + \underline{1}, & \text{ if } f \text{ is odd }
 \end{cases}
\end{eqnarray*}
 by Corollary \ref{cor38}. The result now follows from Proposition \ref{prop4}.
 \end{proof}
 
 \begin{thm}\label{CiCj}
 Let $D = C_i^e \cup C_j^e$ for some $0 \leq i \neq j \leq e-1$. $D$ is an almost difference set in $G$ with parameters $(q, 2f, \lambda, t)$  if and only if 
$$
\begin{cases}
 \{(0,k-i)_e+(j-i,k-i)_e+(i-j,k-j)_e+(0,k-j)_e \hs | \hs 0\leq k\leq  e-1\} = \{\lambda, \lambda +1\},  & \text{if } f \text{ is even} \\
\{(\frac{e}{2}, k-i)_e+(j+\frac{e}{2}-i,k-i)_e +(i+\frac{e}{2}-j, k-j)_e+(\frac{e}{2}, k-j)_e \hs | \hs 0 \leq k \leq e-1\} = \{\lambda, \lambda +1\},  & \text{if } f \text{ is odd} 
\end{cases}
$$
 \end{thm}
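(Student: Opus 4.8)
The plan is to proceed exactly as in the proofs of Theorems~\ref{Ci} and~\ref{Ci0}: expand $D(X)D(X^{-1})$ in $\mathbb{Z}[G]$ using the cyclotomic identities of this section and then read off the conclusion from Proposition~\ref{prop4}.

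First I would write $D = C_i^e \cup C_j^e$ with $i \neq j$; since distinct cyclotomic classes are disjoint, $|D| = 2f$. Applying the corollary immediately following Corollary~\ref{cor38} with $\mc{I} = \{i,j\}$ — equivalently, expanding $D(X)D(X^{-1})$ into the four products $C_a^e(X)C_b^e(X^{-1})$ with $a,b \in \{i,j\}$ and applying Lemmas~\ref{lem9} and~\ref{lem10} to each — yields
$$
D(X)D(X^{-1}) = 2f\cdot\underline{1} + \sum_{k=0}^{e-1} N_k\, C_k^e(X),
$$
where, after carrying out the double sum and performing the index shifts by $\tfrac{e}{2}$ on each factor dictated by Lemma~\ref{lem9} in the case $f$ odd, $N_k$ is precisely the four-term sum of cyclotomic numbers appearing in the statement (the first displayed expression when $f$ is even, the second when $f$ is odd). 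The constant term $2f = |D|$ is forced by that corollary, whose constant is $f|\mc{I}|$; it can also be checked directly from Lemma~\ref{lem10}, the only slightly delicate point being the case $f$ odd with $j \equiv i + \tfrac{e}{2} \pmod{e}$, where the constant contributions migrate from the two ``square'' products to the two ``cross'' products but still sum to $2f$.

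Finally, because the classes $C_0^e, \dots, C_{e-1}^e$ partition $G\setminus\{0\}$, the coefficient of $X^g$ in $D(X)D(X^{-1})$ equals $N_k$ for every $g \in C_k^e$. Comparing with the normal form $k\cdot\underline{1} + (\lambda+1)(G(X)-\underline{1}) - S(X)$ of Proposition~\ref{prop4}, the set $D$ is a $(q,2f,\lambda,t)$-ADS if and only if every $N_k$ lies in $\{\lambda,\lambda+1\}$ and both values are attained, i.e. if and only if $\{N_k \mid 0 \le k \le e-1\} = \{\lambda,\lambda+1\}$ (with $t$ then equal to $f$ times the number of indices $k$ for which $N_k = \lambda$). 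That is the asserted equivalence. I do not expect a genuine obstacle: the argument is pure bookkeeping, and the only thing to watch is keeping the four cyclotomic-number arguments straight through the $\tfrac{e}{2}$-shifts in the $f$-odd case. As a sanity check, summing the $N_k$ over $k$ using parts (3)--(5) of Lemma~\ref{lem8} should reproduce $|D|(|D|-1) = 2f(2f-1)$.
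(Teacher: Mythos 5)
Your argument is correct and is essentially the paper's own proof: the paper simply declares the theorem an immediate consequence of Corollary \ref{cor38} (via the unlabeled corollary for unions, taken with $\mc{I}=\{i,j\}$), which is exactly the four-product expansion you carry out before comparing coefficients with Proposition \ref{prop4}. One parenthetical aside in your write-up is inaccurate but harmless: even when $f$ is odd and $j\equiv i+\frac{e}{2}\ (\text{mod } e)$, the constant $2f$ still comes from the two square products, since $C_a(X)C_a(X^{-1})=C_a(X)C_{a+\frac{e}{2}}(X)$ contributes $f$ by Lemma \ref{lem10} while the cross products contribute no constant term.
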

 
  \begin{proof}
This is an immediate consequence of Corollary \ref{cor38}.
\end{proof} 
 
 \subsubsection{e = 2}\label{C2}
 $\newline$
 \indent First consider an odd prime power $q = 2f+1$. By the preliminary remarks above, we only have to consider the sets $C_0^2$ and $C_1^2$. 
 
 By Theorem \ref{Ci} and the cyclotomic numbers of order 2 given Appendix \ref{app1},  we have that $C_i^2$ is an almost difference set in $(GF(q),+)$ if and only if 
$$
\begin{cases}
\big \{\frac{f-2}{2},\frac{f}{2} \big \} = \{\lambda, \lambda +1\},  & \text{if } f \text{ is even} \\
 \big \{\frac{f-1}{2} \big \} = \{\lambda, \lambda +1\},  & \text{if } f \text{ is odd} 
 \end{cases}
$$
for some nonnegative integer $\lambda$. Thus, $C_i^2$ is an almost difference set if and only if $q \equiv1(\text{mod }4)$.
 
 \subsubsection{e = 3}
$\newline$
 \indent Now consider an odd prime power $q = 3f+1$. We note that, in this case, $f$ must be even. Now we are left to consider sets of the form $C_i^3$ and $C_i^3 \cup \{0\}$. 

By Theorem \ref{Ci} and the cyclotomic numbers of order $3$ given in Appendix \ref{app2}, we have that $C_i^3$ is an almost difference set in $(GF(q),+)$ if and only if 
$$
\bigg\{ \frac{1}{9}(c+q -8), -\frac{1}{18}(c +9d-2q+4), -\frac{1}{18}(c-9d-2q+4) \bigg\} = \{\lambda, \lambda +1\}
$$
for some nonnegative integer $\lambda$ where $4q = c^2+27d^2$ with $c \equiv 1(\text{mod }3)$, is the proper representation of $q$ if $p \equiv 1(\text{mod }3)$, where the sign of $d$ is ambiguously determined. Thus, $C_i^3$ is an almost difference set if and only if $q = 7, 19,$ or $25$. 

Next, by Theorem \ref{Ci0} and the cyclotomic numbers of order $3$ given in Appendix \ref{app2}, we have that $C_i^3 \cup \{0\}$ is an almost difference set in $(GF(q),+)$ if and only if 
$$
\bigg\{ \frac{1}{9}(c+q +10), -\frac{1}{18}(c +9d-2q+4), -\frac{1}{18}(c-9d-2q+4) \bigg\} = \{\lambda, \lambda +1\}
$$
for some nonnegative integer $\lambda$ where $4q = c^2+27d^2$ with $c \equiv 1(\text{mod }3)$. Thus, $C_i^3\cup \{0\}$ is an almost difference set if and only if $q = 13$ or $37$.

\subsubsection{e = 4}
$\newline$
\indent Lastly, consider an odd prime power $q = 4f+1$. In this case, we consider sets of the form $C_i^4$, $C_i^4 \cup \{0\}$, and $C_i \cup C_j$. 

By Theorem \ref{Ci} and the cyclotomic numbers of order $4$ given in Appendix \ref{app3}, we have that $C_i^4$ is an almost difference set in $(GF(q),+)$ if and only if 
$$
\begin{cases}
\{\frac{1}{16}(q+2s-7), \frac{1}{16}(q-2s-3) \} = \{\lambda, \lambda +1\},  & \text{if } f \text{ is even} \\
 \{\frac{1}{16}(q-6s-11), \frac{1}{16}(q+2s+8t-3), \frac{1}{16}(q+2s-3), \frac{1}{16}(q+2s-8t-3)\} = \{\lambda, \lambda +1\},  & \text{if } f \text{ is odd} 
 \end{cases}
 $$
for some nonnegative integer $\lambda$ where $q = s^2+4t^2$, with $s \equiv 1(\text{mod }4)$, is the proper representation of $q$ if $p \equiv 1(\text{mod }4)$, where the sign of $t$ is ambiguously determined. Thus, $C_i^4$ is an almost difference set if and only if $q \equiv 5 (\text{mod } 8)$ and $q = 25+4t^2$ or $9+4t^2$ or $q = 9$.

Next, by Theorem \ref{Ci0} and the cyclotomic numbers of order $4$ given in Appendix \ref{app3}, we have that $C_i^4 \cup \{0\}$ is an almost difference set in $(GF(q),+)$ if and only if 
$$
\begin{cases}
\{\frac{1}{16}(q+2s+9), \frac{1}{16}(q-2s-3) \} = \{\lambda, \lambda +1\},  & \text{if } f \text{ is even} \\
 \{\frac{1}{16}(q-6s+21), \frac{1}{16}(q+2s+8t-3), \frac{1}{16}(q+2s-3), \frac{1}{16}(q+2s-8t-3)\} = \{\lambda, \lambda +1\},  & \text{if } f \text{ is odd} 
 \end{cases}
 $$
for some nonnegative integer $\lambda$ where $q = s^2+4t^2$, with $s \equiv 1(\text{mod }4)$. Thus, $C_i^4 \cup \{0\}$ is an almost difference set if and only if $q \equiv 5(\text{mod }8)$ and $q = 1+4t^2$ or $49+4t^2$.

Lastly, we consider sets of the form $C_i^4 \cup C_j^4$ for some $0 \leq i \neq j \leq 3$. First note that $C_0^4 \cup C_2^4 = C_0^2$ and $C_1^4 \cup C_3^4 = C_1^2$ have already been considered. Thus we are left to consider sets of the form $C_i^4 \cup C_{i+1}^4$. By Theorem \ref{CiCj} and the cyclotomic numbers of order $4$ given in Appendix \ref{app3}, we have that $C_i^4 \cup C_{i+1}^4$ is an almost difference set in $(GF(q), +)$ if and only if
$$
\begin{cases}
 \{(0,k-i)_4+(1,k-i)_4+(3,k-i-1)_4+(0,k-i-1)_4 \hs | \hs 0\leq k\leq  e-1\} = \{\lambda, \lambda +1\},  & \text{if } f \text{ is even} \\
\{(2, k-i)_4+(3,k-i)_4 +(1, k-i-1)_4+(2, k-i-1)_4 \hs | \hs 0 \leq k \leq e-1\} = \{\lambda, \lambda +1\},  & \text{if } f \text{ is odd} 
 \end{cases}
 $$
 for some nonnegative integer $\lambda$ where $q = s^2+4t^2$, with $s \equiv 1(\text{mod }4)$. 
 
  Now for all $i \in \{0,1,2,3\}$, we have that
 \begin{eqnarray*}
 &&\{(0,k-i)_4+(1,k-i)_4+(3,k-i-1)_4+(0,k-i-1)_4 \hs | \hs 0\leq k\leq  e-1\}  \\
 &=& \bigg\{\frac14(q-2t-1), \frac14(q+2t-5),\frac14(q-2t-5), \frac14(q+2t-3)\bigg\} 
 \end{eqnarray*}
 and 
\begin{eqnarray*}
&& \{(2, k-i)_4+(3,k-i)_4 +(1, k-i-1)_4+(2, k-i-1)_4 \hs | \hs 0 \leq k \leq e-1\} \\
&=&\bigg \{\frac14(q-2t-3), \frac14(q+2t-3), \frac14(q-2t-3), \frac14(q+2t-3)\bigg \}
 \end{eqnarray*}
 
 Thus, $C_i^4 \cup C_{i+1}^4$ is an almost difference set if and only if $q \equiv 5 (\text{mod } 8)$ and $q = s^2+4$ with $s \equiv 1(\text{mod }4)$. 
 
 \subsubsection{Summary of Results}
 $\newline$
 \indent In conclusion, for $q = p^{\alpha} = ef+1$, the almost difference sets in $G = (GF(q), +)$ of the form $D = \cup_{i \in \mc{I}} C_i^e$ or $D = \cup_{i \in \mc{I}}C_i^e \cup \{0\}$, where $\mc{I} \subset \{0, 1, \dots, e-1\}$, for orders $e = 2, 3$, and $4$ are precisely:
 \begin{center}
 \begin{tabular}{| c | c |}
\hline
ADS &  Conditions  \\
\hline
$C_i^{2}$ & $q \equiv 1(\text{mod }4)$  \\
\hline
$C_i^3$ & $q = 7,19, 25$ \\
\hline
$C_i^3 \cup \{0\}$ & $q = 13, 37$ \\
\hline
$C_i^4$ & $q \equiv 5(\text{mod }8)$ and $q = 25+4t^2$ or $9+4t^2$, or $q = 9$\\
\hline
$C_i^4 \cup \{0\}$ & $q \equiv 5(\text{mod }8)$ and $q = 1+4t^2$ or $49+4t^2$ \\
\hline
$C_i^4 \cup C_{i+1}^4$ & $q \equiv 5(\text{mod }8)$ and $q = s^2+4$ with $s \equiv 1(\text{mod }4)$\\
\hline
\end{tabular}
\end{center}
up to complementation.

\subsection{Cyclotomic almost difference sets coming as partial difference sets}
$\newline$

As stated earlier, any $(v, k, \lambda, \mu)$-PDS with $|\lambda-\mu| = 1$ is a $(v, k, \lambda, t)$-ADS. Here we present two well known families of cyclotomic partial difference sets with $|\lambda - \mu| = 1$.

\begin{thm}(Payley, Todd) For $q \equiv 1 (\text{mod }4)$, $C_0^2$, the set of nonzero squares in $GF(q)$, is a $(q, \frac{q-1}{2}, \frac{q-5}{4}, \frac{q-1}{4})$-PDS in $(GF(q), +)$.
\end{thm}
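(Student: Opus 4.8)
The plan is to verify the group ring identity of Proposition \ref{prop5} directly, exploiting the cyclotomic machinery already set up for order $e=2$. Since $q \equiv 1 \pmod 4$, writing $q = 2f+1$ we have $f = \frac{q-1}{2}$ even, so by Lemma \ref{lem9} we get $-C_0^2 = C_0^2$, i.e. $C_0^2(X^{-1}) = C_0^2(X)$. Then Corollary \ref{cor38} (the even-$f$ case, which is Lemma \ref{lem10}) yields
\[
C_0^2(X)C_0^2(X^{-1}) = f\cdot\underline{1} + (0,0)_2\,C_0^2(X) + (0,1)_2\,C_1^2(X).
\]

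Next I would substitute the cyclotomic numbers of order $2$ for $q \equiv 1 \pmod 4$ recorded in Appendix \ref{app1}, namely $(0,0)_2 = \frac{q-5}{4}$ and $(0,1)_2 = \frac{q-1}{4}$. Using the partition $G(X) = \underline{1} + C_0^2(X) + C_1^2(X)$ together with $0 \notin C_0^2$, I would rewrite $C_1^2(X) = G(X) - C_0^2(X) - \underline{1}$ to obtain
\[
C_0^2(X)C_0^2(X^{-1}) = \tfrac{q-1}{2}\cdot\underline{1} + \tfrac{q-5}{4}\,C_0^2(X) + \tfrac{q-1}{4}\bigl(G(X) - C_0^2(X) - \underline{1}\bigr).
\]
Since $k = |C_0^2| = f = \frac{q-1}{2}$, this is precisely the identity in Proposition \ref{prop5} for a $k$-subset $D = C_0^2$ with $0 \notin D$, taking $\lambda = \frac{q-5}{4}$ and $\mu = \frac{q-1}{4}$; note $|\lambda - \mu| = 1$ as required. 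By the ``if'' direction of that proposition, $C_0^2$ is a $(q, \frac{q-1}{2}, \frac{q-5}{4}, \frac{q-1}{4})$-PDS, which is the claim.

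The only substantive step is confirming the order-$2$ cyclotomic numbers, and this is classical: one counts $(0,0)_2 = |(C_0^2+1)\cap C_0^2|$ by writing it as $\frac14\sum_{x \neq 0,1}(1+\eta(x))(1+\eta(x-1))$ with $\eta$ the quadratic character of $GF(q)^{\times}$, and evaluating the Jacobi-type sum $\sum_x \eta(x)\eta(x-1) = -1$. Everything else --- the sign relation $-C_0^2 = C_0^2$, the substitution for $C_1^2(X)$, and matching parameters against Proposition \ref{prop5} --- is bookkeeping. I expect no real obstacle; an alternative route is pure character theory, computing $\chi(C_0^2(X)) = \tfrac12(g_\chi - 1)$ for nonprincipal additive $\chi$ via Gauss sums $g_\chi$ of absolute value $\sqrt q$, which shows $C_0^2$ has only two nonprincipal character values and hence is a PDS, but the cyclotomic computation above is the most economical given the tools already developed in Section \ref{sec7}.
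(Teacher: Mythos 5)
Your proposal is correct and follows essentially the same route as the paper: the paper's proof simply points to Section \ref{C2}, which is exactly the order-$2$ cyclotomic computation via Lemma \ref{lem9}, Corollary \ref{cor38}, and the Appendix \ref{app1} values $(0,0)_2=\frac{f-2}{2}$, $(0,1)_2=\frac{f}{2}$ for even $f$. Your only addition is to spell out the match against the group ring identity of Proposition \ref{prop5} (and the optional Jacobi-sum check of the cyclotomic numbers), which is the bookkeeping the paper leaves implicit.
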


\begin{proof}
Given in Section \ref{C2}.
\end{proof}

The next family of ADSs which are also PDSs comes as a special case of a theorem due to Calderbank and Kantor \cite{CK}.

\begin{thm}
For any set $\mc{I} \subset \{0, 1, ..., q\}$ with $|\mc{I}| = \frac{q\pm1}{2}$, $D = \bigcup_{i \in I}C_i^{q+1}$ is a \newline
$(q^2, \frac{q\pm 1}{2}(q-1), \frac{q\pm 1}{2}(\frac{q\pm1}{2}-1)-1, \frac{q\pm1}{2}(\frac{q\pm1}{2}-1))$-PDS in $(GF(q^2), +)$.
\end{thm}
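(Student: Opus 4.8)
The plan is to bypass the general Calderbank--Kantor theorem \cite{CK} and argue directly, exploiting the subfield structure of $GF(q^2)$. Since $\gamma^{q+1}$ has order $(q^2-1)/(q+1)=q-1$, we have $C_0^{q+1}=\langle\gamma^{q+1}\rangle=GF(q)^{\times}$, and hence for each $i$ the set $L_i:=C_i^{q+1}\cup\{0\}=\gamma^i\,GF(q)$ is a $1$-dimensional $GF(q)$-subspace of the plane $V:=(GF(q^2),+)$; the $q+1$ subspaces $L_0,\dots,L_q$ meet pairwise in $\{0\}$ and partition $V$. Thus, writing $m:=|\mc{I}|$, the set $D=\bigcup_{i\in\mc{I}}C_i^{q+1}$ is a union of $m$ lines through the origin with the origin removed, so $k:=|D|=m(q-1)$. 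By Lemma \ref{lem9} (with $e=q+1$ and $f=q-1$ even, since $q$ is odd) we get $-D=D$, and $0\notin D$; so by Proposition \ref{prop5} together with the character-theoretic criterion for partial difference sets (see \cite{Ma94} and the character machinery of Section \ref{nec}), it suffices to show that $\chi(D)$ takes exactly two values, both real, as $\chi$ runs over the nonprincipal additive characters of $V$, and then to read off $(\lambda,\mu)$ from those values.

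The heart of the argument is the character computation. Each $L_i$ is a subgroup of $V$ of order $q$, so for a nonprincipal character $\chi$ the restriction $\chi|_{L_i}$ is a character of $(L_i,+)$; hence $\chi(L_i)=q$ if $\chi$ is trivial on $L_i$ and $\chi(L_i)=0$ otherwise, giving $\chi(C_i^{q+1})=\chi(L_i)-1\in\{q-1,-1\}$. I would then show that every nonprincipal $\chi$ is trivial on exactly one of the lines $L_i$: the annihilator $L_i^{\perp}=\{\chi:\chi\equiv 1\text{ on }L_i\}$ is a subgroup of the character group of order $q^2/q=q$, and for $i\neq j$ one has $L_i+L_j=V$, so $L_i^{\perp}\cap L_j^{\perp}=(L_i+L_j)^{\perp}$ is trivial; since there are $q^2-1$ nonprincipal characters and $(q+1)(q-1)=q^2-1$, each of them lies in exactly one $L_i^{\perp}$. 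Summing over $\mc{I}$, $\chi(D)=\sum_{i\in\mc{I}}\chi(C_i^{q+1})$ equals $(q-1)+(m-1)(-1)=q-m$ when the unique "trivial line" of $\chi$ lies in $\mc{I}$, and equals $m\cdot(-1)=-m$ otherwise; both values occur since $0<m<q+1$.

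Finally, $D=-D$ forces every $\chi(D)$ to be real, and Proposition \ref{prop5} (applied to a nonprincipal $\chi$, using $\chi(G)=0$ from Lemma \ref{lem24}) gives $\chi(D)^2=(k-\mu)+(\lambda-\mu)\,\chi(D)$. Thus $q-m$ and $-m$ are the two roots of $x^2-(\lambda-\mu)x-(k-\mu)=0$, so $\lambda-\mu=(q-m)+(-m)=q-2m$ and $k-\mu=m(q-m)$. Together with $k=m(q-1)$ this gives $\mu=m(q-1)-m(q-m)=m(m-1)$ and $\lambda=m(m-1)+(q-2m)$; since $m=\frac{q\pm1}{2}$ we have $q-2m=\mp1$, so $\lambda=m(m-1)\mp1$, which is the asserted parameter set $\bigl(q^2,\ \frac{q\pm1}{2}(q-1),\ \frac{q\pm1}{2}(\frac{q\pm1}{2}-1)\mp1,\ \frac{q\pm1}{2}(\frac{q\pm1}{2}-1)\bigr)$ (the correction term to $\lambda$ tracking the branch of $\frac{q\pm1}{2}$). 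The main obstacle is really the structural bookkeeping of the middle paragraph---recognizing $C_i^{q+1}\cup\{0\}$ as a line and pinning down that each nonprincipal character is trivial on exactly one line; once that count is in hand the parameters drop out of a one-line computation, and one can sanity-check that the principal character merely reproduces the counting identity $k^2=k+\lambda k+\mu(q^2-1-k)$.
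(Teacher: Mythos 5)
Your argument is correct, and it rests on the same geometric observation as the paper's proof---namely that $S_i := C_i^{q+1}\cup\{0\} = \gamma^i GF(q)$ are the $q+1$ one-dimensional $GF(q)$-subspaces (a spread of lines) of $GF(q^2)$---but the execution is genuinely different. The paper stays entirely inside the group ring: from $S_i(X)S_j(X)=qS_i(X)$ or $G(X)$ according as $i=j$ or $i\neq j$, it expands $\bigl(\sum_{i\in\mc{I}}(S_i(X)-\underline{1})\bigr)^2$ and reads off the exact identity demanded by Proposition \ref{prop5}; this is self-contained but is carried out only for $|\mc{I}|=\frac{q+1}{2}$. You instead compute the nonprincipal character values $\chi(D)\in\{q-m,-m\}$ via the annihilator count, invoke the two-eigenvalue (strongly regular Cayley graph) criterion from \cite{Ma94} to conclude that $D$ is a regular PDS, and recover $(\lambda,\mu)$ by Vieta. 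What your route buys: it handles both branches $m=\frac{q\pm1}{2}$ (indeed any $m$) uniformly, and it yields the correct values $\lambda=m(m-1)\mp1$, $\mu=m(m-1)$; this quietly corrects the theorem as literally printed, whose ``$-1$'' is right only for $m=\frac{q+1}{2}$ (and also corrects the $\lambda,\mu$ appearing in the paper's final display, which should be $\frac{q^2-5}{4}$ and $\frac{q^2-1}{4}$), and you correctly note $f=q-1$ where the paper's proof has a slip. What it costs: the implication ``$D=-D$, $0\notin D$, and only two nonprincipal character values imply PDS'' is imported from \cite{Ma94} rather than proved, whereas the paper's expansion needs no external ingredient; if you wanted to be self-contained, your annihilator computation is exactly the Fourier-side encoding of $S_i(X)S_j(X)\in\{qS_i(X),G(X)\}$, so you could just as well verify the Proposition \ref{prop5} identity directly.
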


\begin{proof}
We'll prove the case for $|\mc{I}| = \frac{q+1}{2}$.

Let $\mc{I} \subset \{0, 1, ..., q\}$ with $|\mc{I}| = \frac{q+1}{2}$ and consider $D = \bigcup_{i \in \mc{I}} C_i^{q+1}$.
By Proposition \ref{prop5}, to show that $D$ is a $(q^2, \frac{q^2-1}{2}, \frac{q^2-3}{2}, \frac{q^2-1}{2})$-PDS in $G = (GF(q^2),+)$, it suffices to show that in the group ring $\mathbb{Z}[G]$,
$$
D(X)D(X^{-1}) = \frac{q^2-1}{2}\cdot \underline{1} + \frac{q^2-3}{2} D(X) + \frac{q^2-1}{2} (G(X)-D(X)-\underline{1})
$$
Fix a primitive element $\gamma$ of $GF(q^2)$. Then
$$
GF(q) = \{x \in GF(q^2) \hs | \hs x^{q-1}=1\} \hs \cup \hs \{0\} = \{\gamma^{(q+1)s} \hs | \hs 0 \leq s \leq q-2\} \hs \cup \hs \{0\}
$$
is a subfield of $GF(q^2)$ and can be represented in terms of cyclotomic cosets as $GF(q) = C_0^{q+1} \cup \{0\}$.

Now if we view $GF(q^2)$ as a 2-dimensional vector space over $GF(q)$, the 1-dimensional subspaces are
$$
S_i = \gamma^i GF(q) = C_i^{q+1} \hs \cup \hs \{0\} \hs \text{ for }\hs 0 \leq i \leq q
$$
That is, the cyclotomic classes of order $q+1$ can be viewed as $1$-dimensional subspaces where the zero element has been removed.

Further, since the $S_i$ are 1-dimensional subspaces and $GF(q^2)$ is a 2-dimensional vector space over $GF(q)$, in the group ring $\mathbb{Z}[G]$,
$$
S_i(X)S_j(X)=
\begin{cases}
qS_i(X), & \text{if } i = j \\
G(X), & \text{if } i \neq j
\end{cases}
$$

Now,
\begin{eqnarray*}
D(X)D(X^{-1}) &=& \bigg(\sum\limits_{i \in \mc{I}}C_i^{q+1}(X)\bigg)\bigg(\sum\limits_{i \in \mc{I}}C_i^{q+1}(X^{-1})\bigg) \\
&=& \bigg(\sum\limits_{i \in \mc{I}}C_i^{q+1}(X)\bigg)^2 \hs \text{ since } f = q+1 \hs \text{ is even} \\
&=& \bigg( \sum\limits_{i \in \mc{I}} (S_i(X)-\underline{1})\bigg)^2 \\
&=& \sum\limits_{i,j}S_i(X)S_j(X) - (q+1)\sum\limits_{i \in \mc{I}} S_i(X) + \bigg(\frac{q+1}{2}\bigg)^2 \cdot \underline{1} \\
&=&  \sum\limits_{i \neq j }G(X) - \sum\limits_{i \in \mc{I}} S_i(X) +\bigg(\frac{q+1}{2}\bigg)^2 \cdot \underline{1} \\
&=& \frac{q^2-1}{4}G(X) -\bigg(\sum\limits_{i \in \mc{I}} C_i^{q+1}(X)+\frac{q+1}{2}\cdot \underline{1}\bigg)  + \bigg(\frac{q+1}{2}\bigg)^2 \cdot \underline{1} \\
 &=& \frac{q^2-1}{4}G(X) - D(X) + \frac{q^2-1}{4} \cdot \underline{1} \\
&=& \frac{q^2-1}{2} \cdot \underline{1} + \frac{q^2-3}{2} D(X) +\frac{q^2-1}{2}(G(X)-D(X)-\underline{1})
\end{eqnarray*}
\end{proof}

\subsection{Cyclotomic almost difference sets not coming as other difference sets}
$\newline$ $\newline$
We now present the known cyclotomic constructions which do not overlap with other types of difference sets.

\begin{thm} The known families of cyclotomic almost difference sets not coming as other difference sets are the following:
\begin{enumerate}
\item $C_0^{4}$ the set of quartic residues is a $(q, \frac{q-1}{4}, \frac{q-13}{16}, \frac{q-1}{2})$-ADS in $(GF(q),+)$ if $q = 25+4y^2$ or $q = 9+4y^2$ $($see Ding \cite{DThesis} and Cusick et al., \cite{CDR}$)$ \\
\item $C_0^4 \hs \cup \hs \{0\}$ the set of quartic residues together with $0$ is a $(q, \frac{q+3}{4}, \frac{q-5}{16}, \frac{q-1}{2})$-ADS in $(GF(q),+)$ if $q = 1+4y^2$  or $q = 49+4y^2$ $($see Ding, Helleseth, and Lam \cite{DHL}$)$ \\
\item $D_0^8$ the set of octic residues is a $(q, \frac{q-1}{8}, \frac{q-41}{64}, \frac{q-1}{2})$-ADS in $(GF(q),+)$ where $q \equiv 41 (\text{mod }64)$ and $q = 19^2+4y^2 = 1+2b^2$ for some integers $y$ and $b$ or $q \equiv 41(\text{mod }64)$ and $q = 13^2+4y^2 = 1+2b^2$ for some integers $y$ and $b$. $($see Ding \cite{DThesis} and Cusick et al., \cite{CDR}$)$ \\
\item $C_i^4 \hs \cup \hs C_{i+1}^4$ for any $i$ $($$0 \leq i \leq 3$$)$ is a $(q, \frac{q-1}{2}, \frac{q-5}{4}, \frac{q-1}{2})$-ADS in $(GF(q),+)$ if $q = x^2+4$ with $x \equiv 1(\text{mod }4)$ $($see Ding, Helleseth, and Lam \cite{DHL}$)$. \\
\item $C_2^4 \hs \cup \hs \{0\} = -(C_0^4 \hs \cup \hs \{0\})$ is a $(q, \frac{q+3}{4}, \frac{q-5}{16}, \frac{q-1}{2})$-ADS in $(GF(q),+)$ if $q = 1+4y^2$ or $q = 49+4y^2$ for some odd integer $y$. $($see X. Wang and J. Wang \cite{WW}$)$. \\
\item $C_0^8\hs \cup \hs C_1^8 \hs \cup \hs C_2^8 \hs \cup \hs C_5^8$ is a $(q, \frac{q-1}{2}, \frac{q-5}{4}, \frac{q-1}{2})$-ADS in $(GF(q),+)$ if $q = l^2$ where $l$ is a prime power of the form $l = t^2+2 \equiv 3(\text{mod }8)$. $($see Ding, Pott, and Wang \cite{DPW}$)$ \\
\item $D_0^8 \hs \cup \hs \{0\}$ is a $\left(p, \frac{p+7}{8}, \frac{p-9}{64}, \frac{3(p-1)}{4}\right)$-ADS in $(GF(p),+)$ if $p = 9+64y^2 = 1+8b^2$ for some odd integers $y$ and $b$. (this paper)
\end{enumerate}
\end{thm}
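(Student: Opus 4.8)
The plan is to apply Theorem~\ref{Ci0} with $e = 8$ to the set $D = D_0^8 \cup \{0\}$, where $D_0^8 = C_0^8$ is the group of octic residues of $GF(p)$, and then to verify the resulting condition on the cyclotomic numbers of order $8$ by substituting their known closed forms and using the two Diophantine hypotheses on $p$.

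First I would record the arithmetic consequences of the hypotheses. Writing $p = 8f+1$, so $f = (p-1)/8$, the relation $p = 1 + 8b^2$ gives $f = b^2$; since $b$ is odd, $f$ is odd and $p \equiv 9 \pmod{16}$. Combining $p = 9 + 64y^2$ with $p = 1 + 8b^2$ forces the Pell relation $b^2 - 8y^2 = 1$, hence $f - 1 = 8y^2$ and $\lambda := (p-9)/64 = y^2 = (f-1)/8$ is a nonnegative integer. Because $f$ is odd, $D = C_0^8 \cup \{0\}$ falls into the branch of Theorem~\ref{Ci0} where $f$ is odd, so $D$ is an ADS (necessarily with $k = f+1 = (p+7)/8$) if and only if
$$
\{(4,0)_8 + 1,\ (4,4)_8 + 1\} \cup \{(4,j)_8 \mid j \in \{1,2,3,5,6,7\}\} = \{\lambda,\lambda+1\}.
$$
The symmetry relation $(i,j)_e = (e-i, j-i)_e$ of Lemma~\ref{lem8} then trims the work: it gives $(4,0)_8 = (4,4)_8$ and pairs $(4,1)_8$ with $(4,5)_8$, $(4,2)_8$ with $(4,6)_8$, and $(4,3)_8$ with $(4,7)_8$, so the displayed condition reduces to checking the four numbers $(4,0)_8, (4,1)_8, (4,2)_8, (4,3)_8$.

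Next I would substitute the tabulated evaluations of the order-$8$ cyclotomic numbers. Since $p \equiv 1 \pmod 8$ and $p = 3^2 + 64y^2$, the element $2$ is a biquadratic residue modulo $p$, so the relevant formulas are those of the biquadratic-residue case (due to Lehmer and Whiteman), expressed through the standard quadratic decompositions of $p$ and the value of the octic character at $2$; here those decompositions are pinned down to $p = 3^2 + 64y^2$ and $p = 1^2 + 2(2b)^2$, while the parity of $y$ fixes the octic character at $2$ and hence selects a single sub-case, and the congruence $p \equiv 9 \pmod{16}$ eliminates the competing one. The point is that the decomposition parameters entering the leading terms are forced to be the small constants $3$ and $1$, so the linear terms cancel and $(4,0)_8, (4,1)_8, (4,2)_8, (4,3)_8$ each collapse to $(p-9)/64 = \lambda$ (making $C_0^8$ and $C_4^8$ the two exceptional classes on which $d_D = \lambda+1$); as a consistency check one has $\sum_{j=0}^{7}(4,j)_8 = f - 1 = 8\lambda$ by Lemma~\ref{lem8}, noting $\theta_4 = 1$ since $f$ is odd. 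Once $D$ is known to be a $(p, (p+7)/8, (p-9)/64, t)$-ADS, the value of $t$ is forced: solving the counting identity $k(k-1) = \lambda t + (\lambda+1)(p-1-t)$ of Theorem~\ref{thm13} with $k = f+1$ and $\lambda = (f-1)/8$ yields $t = 6f = 3(p-1)/4$, as claimed.

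The main obstacle is exactly this substitution step. The cyclotomic numbers of order $8$ are not determined by $p$ alone: their published evaluations split into several cases according to the residue of $p$ modulo $16$, whether $2$ is an octic residue, and sign normalizations of the quadratic decompositions, and the bookkeeping is heavy. The genuine content of the proof is therefore to show that the pair of conditions $p = 9 + 64y^2 = 1 + 8b^2$ with $y$ (hence $b$) odd lands in precisely one of these cases, and that in that case the smallness of the decomposition parameters makes all eight quantities above telescope to $\{\lambda,\lambda+1\}$. Everything else — the reduction via Proposition~\ref{prop4} and Theorem~\ref{Ci0}, the identification of $k$, and the passage to $t$ via Theorem~\ref{thm13} — is routine.
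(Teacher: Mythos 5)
Your reduction is sound as far as it goes: Theorem~\ref{Ci0} with $e=8$, $i=0$ and $f=(p-1)/8=b^2$ odd does reduce the claim to showing that the eight entries $(4,j)_8$, $0\le j\le 7$, all equal $\lambda=(p-9)/64$ (so that $C_0^8$ and $C_4^8$ become the classes with $d_D=\lambda+1$), and your parameter bookkeeping ($k=f+1=\frac{p+7}{8}$, $f-1=8y^2$, and $t=6f=\frac{3(p-1)}{4}$ via Theorem~\ref{thm13}) is correct. But the core of the proof is exactly the step you defer: you assert that after landing in the right sub-case of the order-$8$ tables ``the linear terms cancel'' and each $(4,j)_8$ collapses to $\lambda$, without carrying out the case selection or the substitution. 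That assertion is precisely equivalent to Lehmer's residue-difference-set theorem \cite{Le1}: $C_0^8$ is a $\left(p,\frac{p-1}{8},\frac{p-9}{64}\right)$-difference set in $(GF(p),+)$ if and only if $p=9+64y^2=1+8b^2$ with $y,b$ odd. You neither prove this nor cite it; labelling it ``the main obstacle'' and ``the genuine content'' does not discharge it, and the order-$8$ cyclotomic numbers are not in the paper's appendix, so nothing in the surrounding text closes this for you. As written, the heart of the argument is missing (and note the ``only if'' direction of Lehmer's statement is what makes the Diophantine hypotheses exactly the right ones, which your sketch does not address either).

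For comparison, the paper sidesteps the order-$8$ tables entirely: it quotes Lehmer's theorem as a black box, so that Proposition~\ref{prop6} gives $C_0^8(X)C_0^8(X^{-1})=\frac{p-1}{8}\cdot\underline{1}+\frac{p-9}{64}(G(X)-\underline{1})$, and then a two-line group-ring computation for $D=C_0^8\cup\{0\}$, using $C_0^8(X^{-1})=C_4^8(X)$ because $f=b^2$ is odd, produces
$$
D(X)D(X^{-1})=\frac{p+7}{8}\cdot\underline{1}+\frac{p-9}{64}\bigl(G(X)-(C_0^8\cup C_4^8)(X)-\underline{1}\bigr)+\frac{p+55}{64}(C_0^8\cup C_4^8)(X),
$$
from which Proposition~\ref{prop4} gives the stated parameters. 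If you either cite Lehmer's theorem at the point where you need all $(4,j)_8=\lambda$ (which is equivalent to it), or genuinely execute the order-$8$ cyclotomic-number verification with the case analysis you allude to, your route becomes a correct, if heavier, version of the same argument.
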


\noindent We will now verify construction $7$ to illustrate the proof technique.

\begin{proof}
Assume $p = 8f+1$ and $p = 9+64y^2 = 1+8b^2$ for some odd integers $y$ and $b$. Then we have the following result due to Lehmer \cite{Le1},

\begin{prop}
If $p = 8f+1$, then $C_0^8$ forms a $(p, \frac{p-1}{8}, \frac{p-9}{64})$-difference set  in $(GF(p), +)$ if and only if $p = 9+64y^2= 1+8b^2$ for some odd integers $y$ and $b$.
\end{prop}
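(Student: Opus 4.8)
The final statement to be proved is Lehmer's criterion: for a prime $p=8f+1$, the set $C_0^8$ of octic residues is a $(p,\frac{p-1}{8},\frac{p-9}{64})$-difference set in $(GF(p),+)$ if and only if $p=9+64y^2=1+8b^2$ for some odd integers $y$ and $b$.

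\begin{proof}[Proof proposal]
The plan is to apply the cyclotomic machinery developed in Section \ref{sec7}, specialized to $e=8$. By Theorem \ref{Ci} (with the difference-set condition being the degenerate case $\lambda=\lambda+1$ collapsing to a single value), $C_0^8$ is a $(p,f,\lambda)$-difference set precisely when all the relevant cyclotomic numbers coincide. Concretely, by Corollary \ref{cor38}, since $p=8f+1$ forces $f$ to be even (as $p$ is odd), we have
\[
C_0^8(X)C_0^8(X^{-1}) = f\cdot\underline{1} + \sum_{k=0}^{7}(0,k)_8\, C_k^8(X),
\]
so $C_0^8$ is a difference set if and only if $(0,k)_8$ takes a single common value $\lambda$ for all $k=0,1,\dots,7$. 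First I would record the counting identity $k(k-1)=\lambda(v-1)$ from Proposition \ref{prop6}, which with $k=\frac{p-1}{8}$ and $v=p$ forces $\lambda=\frac{p-9}{64}$; this pins down the target parameter and shows $64\mid p-9$ is necessary.

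The core of the argument is to express the cyclotomic numbers $(0,k)_8$ of order $8$ explicitly in terms of the quadratic partition of $p$. Here I would invoke the classical evaluation of octic cyclotomic numbers (for $p\equiv 1\pmod 8$) in terms of the representations $p=x^2+4y^2$ with $x\equiv 1\pmod 4$ and $p=a^2+2b^2$ with $a\equiv 1 \pmod 4$; these are the diophantine data that govern order-$8$ cyclotomy. The constancy requirement $(0,0)_8=(0,1)_8=\cdots=(0,7)_8$ translates, via the symmetry relations in Lemma \ref{lem8} and the summation identities, into a system of equations in $x,y,a,b$ and $p$. I would then solve this system: the differences among the $(0,k)_8$ are linear in $x,y,a,b$ (up to the known constant terms), so demanding that they all vanish collapses the quadratic partitions to the stated normalized forms. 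The main obstacle is precisely this step — correctly quoting the order-$8$ cyclotomic number formulas and carrying out the elimination, since octic cyclotomy has many cases depending on whether $2$ is a quartic or octic residue, and one must check that the constancy conditions single out exactly the congruence $p\equiv 1\pmod{16}$ together with the sign and parity normalizations that yield $p=9+64y^2=1+8b^2$ with $y,b$ odd.

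To finish, I would verify both directions against these normal forms. For necessity, constancy of the $(0,k)_8$ forces the vanishing of the linear parts, which after substituting $p=x^2+4y^2=a^2+2b^2$ and matching constants yields $x=\pm 3$ and $a=\pm 1$ (after the standard sign normalization $x\equiv 1\pmod 4$, $a\equiv 1\pmod 4$), giving $p=9+4y^2$ refined to $p=9+64y^2$ and $p=1+8b^2$, with the parity of $y,b$ determined by reducing $p\pmod{64}$ and $\pmod{16}$. For sufficiency, I would substitute $x=3$, $a=1$ back into the explicit formulas and confirm directly that every $(0,k)_8$ equals $\frac{p-9}{64}$, so that the group-ring identity of Proposition \ref{prop6} holds and $C_0^8$ is genuinely a $(p,\frac{p-1}{8},\frac{p-9}{64})$-difference set. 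Throughout, Lemma \ref{lem9} guarantees $-C_0^8=C_0^8$ (as $f$ is even), so no extra sign bookkeeping in the $C_i^e(X^{-1})$ term is needed beyond what Corollary \ref{cor38} already supplies.
\end{proof}
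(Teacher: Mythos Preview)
The paper does not prove this proposition at all: it is simply quoted as a known result of Lehmer \cite{Le1} and used as a black box. Your proposal, by contrast, sketches an actual argument via the order-$8$ cyclotomic numbers, which is indeed the classical route. So the comparison is between a genuine (if incomplete) argument and a bare citation.

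That said, your sketch contains a real error that would derail the computation if carried out as written. You assert that ``$p=8f+1$ forces $f$ to be even (as $p$ is odd)'' and then invoke the $f$-even branch of Corollary~\ref{cor38}, concluding that the relevant condition is $(0,k)_8=\lambda$ for all $k$. This parity claim is false: $p=8f+1$ is odd for any integer $f$. In fact the Lehmer hypotheses force $f$ to be \emph{odd}: from $p=1+8b^2$ with $b$ odd one has $f=(p-1)/8=b^2$, which is odd; equivalently, the difference-set counting identity forces $\lambda=(p-9)/64\in\mathbb{Z}$, hence $p\equiv 9\pmod{64}$ and $f\equiv 1\pmod 8$. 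Consequently Lemma~\ref{lem9} gives $-C_0^8=C_4^8$, not $C_0^8$, and the correct group-ring expansion from Corollary~\ref{cor38} is
\[
C_0^8(X)C_0^8(X^{-1})=f\cdot\underline{1}+\sum_{k=0}^{7}(4,k)_8\,C_k^8(X),
\]
so the constancy condition you must analyze is $(4,k)_8=\lambda$ for all $k$, not $(0,k)_8=\lambda$. With this correction your outline---plug in Dickson's explicit formulas for the octic cyclotomic numbers in terms of the representations $p=x^2+4y^2$ and $p=a^2+2b^2$, set the eight values equal, and solve---is the right strategy and is exactly how Lehmer's theorem is established; but as written the wrong row of cyclotomic numbers is being equated, and your closing remark that ``$-C_0^8=C_0^8$ (as $f$ is even)'' is likewise incorrect.
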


Thus, $C_0^8$ is a difference set in $(GF(p), +)$ and by Proposition \ref{prop6}, in the group ring $\mathbb{Z}[G]$ we have
$$
C_0^8(X)C_0^8(X^{-1}) = \frac{p-1}{8}\cdot \underline{1} + \frac{p-9}{64}(G(X)-\underline{1})
$$

Now consider $D = C_0^8 \hs \cup \hs \{0\}$. In the group $\mathbb{Z}[G]$, we have

\begin{eqnarray*}
D(X)D(X^{-1}) &=& (C_0^8(X)+\underline{1})(C_0^8(X^{-1})+\underline{1}) \\
&=& C_0^8(X)C_0^8(X^{-1}) + C_0^8(X)+C_0^8(X^{-1}) +\underline{1} \\
&=& \bigg(\frac{p-1}{8} \cdot \underline{1} + \frac{p-9}{64}(G(X)-\underline{1})\bigg)+C_0^8(X)+C_0^8(X^{-1}) +\underline{1} \\
&=& \frac{p+7}{8}\cdot \underline{1} + \frac{p-9}{64}(G(X)-\underline{1}) + C_0^8(X) + C_4^8(X)  \hs \text{ since } f = \frac{p-1}{8} = b^2 \hs \text{ is odd}\\
&=& \frac{p+7}{8} \cdot \underline{1} + \frac{p-9}{64}(G(X)- (C_0^8 \hs \cup \hs C_4^8)(X)-\underline{1})+ \frac{p+55}{64} (C_0^8 \hs \cup \hs C_4^8)(X)
\end{eqnarray*}

Thus, by Proposition \ref{prop4}, $D = C_0^8 \hs \cup \hs \{0\}$ is a $\left(p, \frac{p+7}{8}, \frac{p-9}{64}, \frac{3(p-1)}{4}\right)$-ADS in $(GF(p), +)$.
\end{proof}

\section{Almost Difference Sets from Functions}

In this section we present some almost difference sets which arise as the support of certain functions.

The cyclotomic constructions in Section \ref{sec7} all reside within groups of odd order. However, there are several classes of almost difference sets which occur in abelian groups of even order.  We now present one of them.

\begin{thm}\cite{AD}
Let $q$ be an odd prime power and fix a primitive element $\gamma$ for $GF(q)$. Now define the function $f$ from $(\mathbb{Z}_{q-1}, +)$ to $(GF(2),+)$ by
$$
f(i) =
\begin{cases}
1, & \text{if } \gamma^i \in (C_1^2 -1) \\
0, & \text{otherwise }
\end{cases}
$$
and let $D = \{x \in \mathbb{Z}_{q-1} \hs | \hs f(x) =1 \}$ be the support of $f$. Then $D$ is a $(q-1, \frac{q-1}{2}, \frac{q-3}{4}, \frac{3q-5}{4})$-ADS in $\mathbb{Z}_{q-1}$ if $q \equiv 3 (\text{mod }4)$, and a $(q-1, \frac{q-1}{2}, \frac{q-5}{4}, \frac{q-1}{4})$-ADS in $\mathbb{Z}_{q-1}$ if $q \equiv 1 (\text{mod }4)$.\end{thm}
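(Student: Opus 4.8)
The plan is to work directly with the difference function $d_D$, reduce its evaluation to a count of $x\in GF(q)^\times$ for which two affine images are simultaneously nonsquares, and then evaluate that count using the quadratic character of $GF(q)$. First I would identify $\mathbb{Z}_{q-1}$ with the exponent set $\{0,1,\dots,q-2\}$ via $i\leftrightarrow\gamma^i$, so that $D=\{i:\gamma^i+1\in C_1^2\}$, i.e.\ $D$ consists of the $i$ for which $\gamma^i+1$ is a nonzero nonsquare. As $\gamma^i$ runs through $GF(q)^\times$, the element $\gamma^i+1$ runs bijectively through $GF(q)\setminus\{1\}$, a set that contains all $(q-1)/2$ nonsquares (since $1$ is a square) but not $0$; hence $|D|=(q-1)/2=k$ immediately. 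For a nonzero shift $w$ I would set $a=\gamma^w$, so $a\neq 1$ because $\gamma$ has order $q-1$, and observe that $y\in(D+w)\cap D$ means both $\gamma^y+1$ and $\gamma^{y-w}+1$ are nonsquares; substituting $x=\gamma^{y-w}$ (so $\gamma^y=ax$) and letting $x$ run over $GF(q)^\times$ gives
\[
d_D(w)=|\{x\in GF(q)^\times : x+1\in C_1^2 \text{ and } ax+1\in C_1^2\}|.
\]
Thus the whole statement reduces to evaluating this count and tallying how often each value occurs.

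Next I would bring in the quadratic multiplicative character $\eta$ of $GF(q)$ (with $\eta(0)=0$), so that for $y\neq 0$ the indicator of ``$y$ is a nonsquare'' equals $(1-\eta(y))/2$. Writing $N=d_D(w)$ and separating off $x=-1$ and $x=-a^{-1}$ (two distinct nonzero field elements since $a\neq 1$, each contributing $0$ to $N$), I obtain $4N=\sum_x(1-\eta(x+1))(1-\eta(ax+1))$, the sum over $x\in GF(q)\setminus\{0,-1,-a^{-1}\}$. Expanding into four sums and completing each to a sum over all of $GF(q)$ (the removed values at $x=0,-1,-a^{-1}$ being computed directly from $\eta(1)=1$, $\eta(0)=0$), the two linear sums $\sum_{x\in GF(q)}\eta(x+1)$ and $\sum_{x\in GF(q)}\eta(ax+1)$ vanish, leaving corrections $\eta(1-a^{-1})$ and $\eta(1-a)$, while the bilinear term becomes $S=\sum_{x\in GF(q)}\eta((x+1)(ax+1))$. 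Since $(x+1)(ax+1)=ax^2+(a+1)x+1$ has discriminant $(a+1)^2-4a=(a-1)^2\neq 0$, the standard evaluation of the character sum of a quadratic gives $S=-\eta(a)$. After collecting terms and using $\eta(1-a^{-1})=\eta(a-1)\eta(a)$ and $\eta(1-a)=\eta(-1)\eta(a-1)$, I expect to arrive at
\[
d_D(w)=\frac{1}{4}\Bigl((q-2)+\eta(a-1)\bigl(\eta(a)+\eta(-1)\bigr)-\eta(a)\Bigr).
\]

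Then I would split on $q\bmod 4$; note $\eta(a)\neq 0$ (as $a\neq 0$) and $\eta(a-1)\neq 0$ (as $a\neq 1$). If $q\equiv 1\pmod 4$ then $\eta(-1)=1$, and running through the four sign patterns of $(\eta(a),\eta(a-1))$ shows $d_D(w)=(q-1)/4$ except when $\eta(a)=1$ and $\eta(a-1)=-1$, where $d_D(w)=(q-5)/4$; hence $\lambda=(q-5)/4$. If $q\equiv 3\pmod 4$ then $\eta(-1)=-1$, and the same check gives $d_D(w)=(q-3)/4$ except when $\eta(a)=\eta(a-1)=-1$, where $d_D(w)=(q+1)/4$; hence $\lambda=(q-3)/4$. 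To identify $t$, note that as $w$ runs over $\mathbb{Z}_{q-1}\setminus\{0\}$ the element $a=\gamma^w$ runs over $GF(q)^\times\setminus\{1\}$, with $\eta(a)=1$ corresponding to $a\in C_0^2$ and $\eta(a)=-1$ to $a\in C_1^2$. For $q\equiv 1$ the number of shifts realizing $\lambda$ is $|\{a\in C_0^2:a-1\in C_1^2\}|=(1,0)_2=(q-1)/4$, exactly the claimed $t$; for $q\equiv 3$ the number of shifts realizing $\lambda+1$ is $|\{a\in C_1^2:a-1\in C_1^2\}|=(1,1)_2=(q-3)/4$, so $t=(q-1)-1-(q-3)/4=(3q-5)/4$, again as claimed. (Degenerate cases such as $a-1=0$ force $w=0$ and are excluded.)

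The main obstacle is the bookkeeping in the second step: keeping exact track of the terms lost when the three character sums are restricted away from $x=0,-1,-a^{-1}$, and correctly invoking the evaluation $S=-\eta(a)$ for a nonsingular quadratic. Once the closed form for $d_D(w)$ is in hand, the rest is routine --- a finite sign analysis in $\eta(a),\eta(a-1),\eta(-1)$, together with the formulas for the cyclotomic numbers of order $2$ recorded in the Appendix.
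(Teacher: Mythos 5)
Your proof is correct: the reduction of $d_D(w)$ to the count of $x\in GF(q)^\times$ with $x+1$ and $ax+1$ both nonsquares, the character-sum evaluation $4\,d_D(w)=(q-2)+\eta(a-1)\bigl(\eta(a)+\eta(-1)\bigr)-\eta(a)$ (using $\sum_x\eta(ax^2+bx+c)=-\eta(a)$ for nonzero discriminant), and the determination of $t$ via $(1,0)_2$ and $(1,1)_2$ all check out against the claimed parameters. The survey itself states this theorem only with a citation to \cite{AD} and gives no proof, and your argument is essentially the standard quadratic-character/order-2 cyclotomy computation that the paper's own machinery (the group-ring lemmas of Section 6 and the cyclotomic numbers in Appendix A) would yield.
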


Next, we introduce some almost difference sets obtained from sequences with perfect nonlinearity.

Let $(A,+)$ and $(B,+)$ be abelian groups of order $m$ and $n$, respectively, and let $f$ be a function from $A$ to $B$. One measure of nonlinearity for $f$ is given by
$$
P_f := \max_{a \in A\setminus\{0\}} \max_{b \in B} Pr(f(x+a)-f(x) = b)
$$
where Pr($E$) denotes the probability that event $E$ occurs. Many are interested in functions with high nonlinearity. These functions have important applications in cryptography and coding theory \cite{CDR}.
$P_f$ and the nonlinearity of $f$ are inversely proportional. That is, the smaller the value of $P_f$, the higher the nonlinearity of $f$. Thus, for applications in coding theory and cryptography we wish to find functions with the smallest $P_f$ possible.  $P_f$ is bounded below by $\frac{1}{|B|}$ (see \cite{AD}) and this bound is attainable. When $P_f = \frac{1}{|B|}$, we say that $f$ has \textit{perfect nonlinearity}. The next theorem lists some functions with perfect nonlinearity.

\begin{lem}\cite{TS} \label{lem48}
The power function $x^s$ from $GF(p^m)$ to $GF(p^m)$, where $p$ is odd, has perfect nonlinearity $P_f = \frac{1}{p^m}$ for the following $s$:
\begin{itemize}
\item s = 2
\item $s = p^k+1$, where $\frac{m}{\gcd(m,k)}$ is odd
\item $s =  \frac{3^k+1}{2}$ where $p = 3$, $k$ is odd, and $\gcd(m,k) = 1$.
\end{itemize}
\end{lem}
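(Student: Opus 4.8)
The plan is to use the standard reformulation of perfect nonlinearity via derivatives. For $f(x) = x^s$ and $a \in GF(p^m)$, put $\Delta_a f(x) = (x+a)^s - x^s$. For each $b \in GF(p^m)$ we have $\Pr(\Delta_a f(x) = b) = p^{-m}\cdot\#\{x : \Delta_a f(x) = b\}$, and summing these cardinalities over $b$ gives $p^m$; hence $\max_b \Pr(\Delta_a f(x)=b) = p^{-m}$ exactly when every fiber of $\Delta_a f$ is a singleton, i.e.\ when $\Delta_a f$ permutes $GF(p^m)$. Since the lower bound $P_f \geq p^{-m}$ has already been recorded, proving $P_f = p^{-m}$ amounts to showing that $\Delta_a f$ is a bijection for every nonzero $a$. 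Moreover, the substitution $x \mapsto ax$ gives $\Delta_a f(x) = a^s\,\Delta_1 f(a^{-1}x)$, which is a bijection iff $\Delta_1 f$ is, so for each listed $s$ it suffices to prove that $g(x) := (x+1)^s - x^s$ permutes $GF(p^m)$.

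For $s = 2$ this is immediate: $g(x) = 2x+1$ is affine with invertible leading coefficient (this is the only place oddness of $p$ is used), hence a bijection. For $s = p^k+1$, the Frobenius identity $(x+1)^{p^k} = x^{p^k}+1$ gives
\[
g(x) = (x^{p^k}+1)(x+1) - x^{p^k+1} = x^{p^k} + x + 1,
\]
so $g$ is a permutation iff the $GF(p)$-linear map $L(x) = x^{p^k}+x$ has trivial kernel. A nonzero element of $\ker L$ satisfies $x^{p^k-1} = -1$, and such an $x$ exists iff $-1$ lies in the image of the $(p^k-1)$-th power map on the cyclic group $GF(p^m)^\times$, i.e.\ iff $2$ divides $\frac{p^m-1}{p^d-1}$ where $d = \gcd(m,k)$. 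Since $\frac{p^m-1}{p^d-1} = 1 + p^d + p^{2d} + \cdots + p^{(m/d-1)d}$ is a sum of $m/d$ odd terms, it is even precisely when $m/d$ is even; thus $L$ is injective, hence $g$ a permutation, exactly when $m/\gcd(m,k)$ is odd, which is the hypothesis. (Note that $s = 2$ is the degenerate case $k = 0$ of this family.)

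The third exponent, $s = (3^k+1)/2$ with $p = 3$, $k$ odd and $\gcd(m,k) = 1$, is the real obstacle. Here $g(x) = (x+1)^s - x^s$ is genuinely nonlinear, so the clean linearized-polynomial argument is unavailable; proving that $g$ permutes $GF(3^m)$ is precisely the Coulter--Matthews planar-function theorem, whose proof is substantially more delicate and proceeds through a careful analysis of $g$ rather than a short reduction. In the survey the honest course is to present cases $1$ and $2$ in full as above and to cite \cite{TS} (which in turn rests on the Coulter--Matthews result) for case $3$, flagging it as where the difficulty of the lemma is concentrated.
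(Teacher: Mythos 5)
The paper itself offers no proof of this lemma: it is quoted directly from Helleseth--Sandberg \cite{TS}, so there is no internal argument to compare yours against line by line. Your route is the standard planar-function reformulation and it is correct as far as it goes. The counting argument showing that $P_f = p^{-m}$ forces every derivative $\Delta_a f$, $a \neq 0$, to be a bijection is right, as is the reduction to $a=1$ via $\Delta_a f(x) = a^s\,\Delta_1 f(a^{-1}x)$. For $s=2$ the derivative $2x+1$ is a bijection precisely because $p$ is odd, and for $s = p^k+1$ your computation $g(x) = x^{p^k}+x+1$ together with the kernel analysis --- a nonzero solution of $x^{p^k-1}=-1$ exists iff $2$ divides $(p^m-1)/(p^{\gcd(m,k)}-1)$, which is a sum of $m/\gcd(m,k)$ odd terms and hence even exactly when $m/\gcd(m,k)$ is even --- is the classical argument and matches the stated hypothesis exactly. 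For the exponent $s=(3^k+1)/2$ with $p=3$ you do not prove planarity but defer to the Coulter--Matthews theorem via the cited literature; that is honest and no weaker than what the survey does, since the survey defers the entire lemma to \cite{TS}. In effect your proposal supplies more than the paper: complete elementary proofs of the first two bullets, plus a correct identification of where the only real difficulty lies.
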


Now we present a construct due to Arasu et al. \cite{AD} which uses functions with perfect nonlinearity to construct almost difference sets.

\begin{thm}\cite{AD}
Let $f$ be a function from an abelian group $(A, +)$ of order $n$ to another abelian group $(B,+)$ order order $n$ with perfect nonlinearity $P_f = \frac{1}{n}$ and define
$$
C_b = \{x \in A \hs | \hs f(x) = b\}
$$
and
$$
C = \bigcup_{b \in B} \left(\{b\} \times C_b\right) \hs \subset \hs B \times A
$$
Then $C$ is an $(n^2, n , 0, n-1)$-ADS in $B \times A$.
\end{thm}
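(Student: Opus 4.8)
The plan is to compute the difference function $d_C$ directly from the definition of an ADS. First I would record the crucial consequence of perfect nonlinearity in the equal-order setting: since $|A| = |B| = n$, for any fixed $a \in A \setminus \{0\}$ the quantities $Pr(f(x+a) - f(x) = b)$ sum to $1$ as $b$ ranges over $B$, and each of them is at most $\frac{1}{n} = P_f$; hence each equals exactly $\frac{1}{n}$. Equivalently, for every $a \neq 0$ and every $c \in B$ there is \emph{exactly one} $x \in A$ with $f(x+a) - f(x) = c$, i.e.\ each ``derivative'' map $x \mapsto f(x+a) - f(x)$ is a bijection $A \to B$. I would also note right away that $|C| = \sum_{b \in B} |C_b| = |A| = n$, since the $C_b$ partition $A$; this gives $k = n$ as claimed.

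Next I would translate the intersection $(C + (c,a)) \cap C$ into a solution count. A pair $(b', a') \in B \times A$ lies in $C$ iff $f(a') = b'$, and it lies in $C + (c,a)$ iff $(b' - c,\, a' - a) \in C$ iff $f(a' - a) = b' - c$. Combining these, $(b', a') \in (C + (c,a)) \cap C$ iff $b' = f(a')$ and $f(a') - f(a' - a) = c$; since $b'$ is then determined by $a'$, this yields
$$
d_C((c,a)) = |(C + (c,a)) \cap C| = |\{\, a' \in A : f(a') - f(a' - a) = c \,\}|.
$$

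Then I would split into the two relevant cases for a nonzero element $(c,a) \in B \times A$. If $a \neq 0$, the substitution $x = a' - a$ turns the condition into $f(x+a) - f(x) = c$, which by the first step has exactly one solution; hence $d_C((c,a)) = 1$ for all $n(n-1)$ such elements. If $a = 0$ and $c \neq 0$, the condition becomes $f(a') - f(a') = c$, which is never satisfied, so $d_C((c,0)) = 0$ for these $n - 1$ elements. Therefore $d_C$ takes the value $0$ exactly $n - 1$ times and the value $1$ exactly $n^2 - 1 - (n-1) = n^2 - n$ times over the nonzero elements of $B \times A$, which is precisely the definition of an $(n^2, n, 0, n-1)$-ADS.

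The only genuinely substantive step — and the one I would state with care — is the opening observation that perfect nonlinearity together with $|A| = |B|$ forces every derivative map $x \mapsto f(x+a) - f(x)$ (for $a \neq 0$) to be a bijection (the planar / perfect-nonlinear property); everything after that is bookkeeping. The one place where a little attention is needed is separating out the ``vertical'' differences $(c, 0)$ with $c \neq 0$: these are exactly the $t = n - 1$ elements that occur $\lambda = 0$ times, and it is this split that makes the parameter $t$ come out to $n-1$ rather than something larger.
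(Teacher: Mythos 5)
Your proof is correct, and all the pieces are in the right place: the observation that $P_f=\frac1n$ together with $|A|=|B|=n$ forces each derivative map $x\mapsto f(x+a)-f(x)$ (for $a\neq 0$) to hit every element of $B$ exactly once, the identification $d_C((c,a))=|\{a'\in A : f(a')-f(a'-a)=c\}|$, and the split into the cases $a\neq 0$ (value $1$, occurring $n(n-1)$ times) and $a=0,\ c\neq 0$ (value $0$, occurring $n-1$ times) exactly match the parameters $(n^2,n,0,n-1)$, and the counting identity $k(k-1)=\lambda t+(\lambda+1)(v-1-t)$ confirms the bookkeeping. The paper itself states this theorem with only a citation to Arasu et al.\ and gives no proof, so there is nothing to compare against; your argument is the standard one for this result and can stand as a complete verification.
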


Now combining the last result with the functions in Lemma \ref{lem48}, we have the following:

\begin{thm} \cite{AD}
Let $f(x) = x^s$ be a function from $GF(p^m)$ to $GF(p^m)$, where $p$ is odd. Define $C_b = \{x \in GF(p^m) \hs | \hs f(x) = b \}$ for each $b \in GF(p^m)$ and
$$
C = \bigcup_{b \in GF(p^m)} \left(\{b\} \times C_b \right) \hs \subset \hs GF(p^m) \times GF(p^m)
$$
If
\begin{itemize}
\item s = 2
\item $s = p^k+1$, where $\frac{m}{\gcd(m,k)}$ is odd
\item $s =  \frac{3^k+1}{2}$ where $p = 3$, $k$ is odd, and $\gcd(m,k) = 1$.
\end{itemize}
Then $C$ is a $(p^{2m}, p^m, 0, p^m-1)$-ADS in $GF(p^m) \times GF(p^m)$.
\end{thm}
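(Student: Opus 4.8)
The plan is to recognize that this theorem is nothing more than the composition of the two results immediately preceding it, so the proof will be a short reduction with no new content. Lemma~\ref{lem48} classifies the exponents $s$ for which the power map $x \mapsto x^s$ on $GF(p^m)$ is perfectly nonlinear, and the construction theorem just above this statement turns any perfectly nonlinear map between abelian groups of equal order into an ADS of the stated shape. The task is simply to feed the former into the latter and track parameters.

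First I would invoke Lemma~\ref{lem48}: under each of the three listed conditions on $s$ — namely $s = 2$; $s = p^k+1$ with $m/\gcd(m,k)$ odd; or $s = (3^k+1)/2$ with $p = 3$, $k$ odd, $\gcd(m,k) = 1$ — the function $f(x) = x^s$ from $GF(p^m)$ to $GF(p^m)$ has perfect nonlinearity $P_f = 1/p^m$. Since $|GF(p^m)| = p^m$, this is exactly the bound $P_f = 1/|B|$ with $B = (GF(p^m), +)$, so $f$ is a perfectly nonlinear function in the sense required by the construction.

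Next I would apply the preceding theorem with $A = B = (GF(p^m), +)$, which is an abelian group of order $n := p^m$. Its hypothesis — a function $f\colon A \to B$ with perfect nonlinearity $P_f = 1/n$ — is met, and its conclusion states precisely that
$$
C = \bigcup_{b \in GF(p^m)} \bigl(\{b\} \times C_b\bigr), \qquad C_b = \{x \in GF(p^m) \mid f(x) = b\},
$$
is an $(n^2, n, 0, n-1)$-ADS in $B \times A = GF(p^m) \times GF(p^m)$. Substituting $n = p^m$, and noting $|GF(p^m) \times GF(p^m)| = (p^m)^2 = p^{2m}$, yields exactly the claimed parameters $(p^{2m}, p^m, 0, p^m-1)$.

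There is no genuine obstacle here: all the substantive work lives in the two ingredient results, and this statement is the specialization obtained by choosing $A = B$ to be the additive group of $GF(p^m)$ and taking $f$ from the list in Lemma~\ref{lem48}. The only point needing a moment's care is bookkeeping — confirming that the sets $C_b$ and $C$ in this statement are defined verbatim as in the construction theorem, and that the ambient-order parameter $n^2$ translates correctly to $p^{2m}$. One could add a remark that $GF(p^m) \times GF(p^m)$ is here an elementary abelian group, but the construction uses nothing beyond the group being abelian of order $n$, so this extra structure plays no role.
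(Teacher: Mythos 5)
Your proposal is correct and matches the paper exactly: the paper presents this theorem with the preamble ``combining the last result with the functions in Lemma \ref{lem48}," i.e.\ the same reduction of feeding the perfectly nonlinear power maps into the preceding construction with $A = B = (GF(p^m),+)$ and $n = p^m$. Your parameter bookkeeping ($n^2 = p^{2m}$) is the only step needed, and it is handled correctly.
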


\section{Almost Difference Sets from Binary Sequences with Three-Level Autocorrelation}\label{sec9}

We search for almost difference sets as a means of producing binary sequences with three-level autocorrelation. In some cases though, the sequences were identified first. In this section, we provide some examples of almost difference sets coming as characteristic sets of known binary sequences with three-level autocorrelation. 

Let $\{s(t)\}$ be a binary sequence with period $n$ and let $D$ be its support with $|D| = k$. Then, as stated earlier, if $\{s(t)\}$ has three-level autocorrelation and correlation values 
$$
C_s(w) =
\begin{cases}
n, & \text{for } w = 0 \\
n-4(k-\lambda), & \text{for } t \text{ nonzero elements in } \mathbb{Z}_n \\
n-4(k-\lambda-1), &\text{for the remaining } n-t-1 \text{ nonzero elements in } \mathbb{Z}_n
\end{cases}
$$
then $D$ in an $(n,k,\lambda, t)$-almost difference set in $\mathbb{Z}_n$.

The first construction is due to Arasu et al. \cite{AD} and comes from interweaving four closely related sequences with ideal autocorrelation.

\begin{thm} \cite{AD} \label{thm50}
Let $\{s(t)\}$ be a binary sequence with period $l$ which has ideal autocorrelation. Define the matrix $M = (m_{i, j})$ as follows:
$$
M =
 \begin{pmatrix}
  s(0) & s(1) & \cdots & s(l-1) \\
  \bar{s}(0+\delta) & \bar{s}(1+\delta) & \cdots & \bar{s}(l-1+\delta) \\
  \bar{s}(0) & \bar{s}(1) & \cdots & \bar{s}(l-1) \\
     \bar{s}(0+\delta) & \bar{s}(1+\delta) & \cdots & \bar{s}(l-1+\delta)
 \end{pmatrix}
 $$
 where $\bar{s}(i) = 1+s(i)$ is the complement of $s(i)$, and $0\leq \delta \leq l-1$ is any fixed integer.
 Now define $\{u(t)\}$ to be the sequence of period $4l$ given by
 $$
 u(t) = m_{i,j} \hs \text{ where } \hs i \equiv t \pmod{4} \text{ and } j \equiv t \pmod{l}
 $$
 Then $\{u(t)\}$ has optimal autocorrelation, given by
 $$
C_u(w) =
\begin{cases}
4l, & \text{for } w = 0 \\
-4, & \text{for } 3l \text{ nonzero elements in } \mathbb{Z}_n \\
0, &\text{for the remaining } l-1 \text{ nonzero elements in } \mathbb{Z}_n
\end{cases}
$$
 \end{thm}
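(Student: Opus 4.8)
The plan is to compute the autocorrelation $C_u(w)$ of the interleaved sequence $\{u(t)\}$ directly from the defining matrix $M$, reducing everything to the autocorrelation of the base sequence $\{s(t)\}$, which by hypothesis is ideal, i.e. $C_s(\tau) = l$ for $\tau \equiv 0 \pmod{l}$ and $C_s(\tau) = -1$ otherwise. First I would fix a shift $w \in \mathbb{Z}_{4l}$ and write $C_u(w) = \sum_{t \in \mathbb{Z}_{4l}} (-1)^{u(t+w) - u(t)}$. Using the Chinese Remainder description $t \leftrightarrow (t \bmod 4, t \bmod l)$ (valid since $\gcd(4,l)=1$ because $l$ is odd, as ideal autocorrelation forces $l \equiv 3 \pmod 4$), I would reindex the sum over pairs $(i,j)$ with $i \in \mathbb{Z}_4$, $j \in \mathbb{Z}_l$, so that $u(t) = m_{i,j}$ and $u(t+w) = m_{i+a,\, j+b}$ where $a \equiv w \pmod 4$ and $b \equiv w \pmod l$. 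Thus
\[
C_u(w) = \sum_{i \in \mathbb{Z}_4} \sum_{j \in \mathbb{Z}_l} (-1)^{m_{i+a,\,j+b} - m_{i,j}} = \sum_{i \in \mathbb{Z}_4} R_i(a,b),
\]
where $R_i(a,b) = \sum_{j \in \mathbb{Z}_l} (-1)^{m_{i+a,\,j+b}-m_{i,j}}$ is a correlation between row $i$ and row $i+a$ of $M$ at internal shift $b$.

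Next I would observe that each row of $M$ is, up to a global complement, a shift of $\{s(\cdot)\}$: row $0$ is $s(j)$, row $2$ is $\bar s(j)$, and rows $1$ and $3$ are both $\bar s(j+\delta)$. Complementing both entries in an exponent difference $(-1)^{m'-m}$ leaves it unchanged, while complementing only one flips the sign; and a shift of $s$ by $c$ turns a row-row correlation into $C_s$ evaluated at a shifted argument. So each $R_i(a,b)$ is $\pm C_s(b + \varepsilon_i \delta)$ for an explicit $\varepsilon_i \in \{-1,0,1\}$ and an explicit sign depending on the parities of the pair of rows involved. I would then organize the computation by the value of $a = w \bmod 4 \in \{0,1,2,3\}$: for each of the four cases, list the four pairs of rows $(i, i+a)$, read off the sign and the shift from the table above, and sum. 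For instance, when $a = 0$ we get $C_s(b) - C_s(b) - C_s(b) + C_s(b)$-type cancellations; when $a = 2$ rows pair as $(0,2),(1,3),(2,0),(3,1)$ giving a different combination, etc. Summing the four contributions yields a closed form for $C_u(w)$ in terms of $C_s(b)$ and $C_s(b \pm \delta)$, where $b = w \bmod l$.

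Finally I would split into the cases $l \mid w$ versus $l \nmid w$ and substitute the ideal values $C_s = l$ or $C_s = -1$. When $l \nmid w$ every $C_s$ term equals $-1$ (here one must check $b \pm \delta \not\equiv 0$ does not cause trouble — if it does, one of the terms is $l$ instead, but the coefficient pattern is arranged so the result is still $0$ or $-4$; this is the one place requiring a small case check), and the signed sums collapse to $-4$. When $l \mid w$ but $w \neq 0$, we have $b = 0$, so $C_s(0) = l$ while $C_s(\pm\delta) = -1$ (for $\delta \not\equiv 0$; the degenerate $\delta = 0$ sub-case should be handled separately or excluded), and the combination gives $0$; there are exactly $l - 1$ such shifts $w$ (the nonzero multiples of $l$ in $\mathbb{Z}_{4l}$), leaving $3l$ shifts with value $-4$, matching the claimed distribution. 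The case $w = 0$ trivially gives $4l$. The main obstacle is bookkeeping: correctly tracking the four sign/shift patterns for each residue class of $w \bmod 4$ and verifying that the degenerate alignments ($b \equiv \pm\delta \equiv 0$) do not spoil the count — this is where an error would most likely creep in, so I would tabulate the $(i, i+a)$ pairs, their row parities, and their relative shifts explicitly before summing.
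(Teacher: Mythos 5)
Your general framework is the right one (note the survey gives no proof of this theorem at all --- it simply cites \cite{AD} --- so your proposal has to stand on its own): reindex $t\in\mathbb{Z}_{4l}$ by $(t\bmod 4,\,t\bmod l)$, write $C_u(w)=\sum_{i\in\mathbb{Z}_4}R_i(a,b)$ with $a\equiv w\pmod 4$, $b\equiv w\pmod l$, and use the fact that each row of $M$ is a complemented/shifted copy of $s$, so that $R_i(a,b)=(-1)^{c_{i+a}+c_i}C_s(b+e_{i+a}-e_i)$, where $c=(0,1,1,1)$ records the complement flags and $e=(0,\delta,0,\delta)$ the internal shifts. But the case analysis is wrong at the decisive points. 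For $a=0$ each row is correlated with itself, the complement flags cancel in pairs, and you get $4C_s(b)$ --- not the ``$C_s(b)-C_s(b)-C_s(b)+C_s(b)$-type cancellation'' you describe; hence $C_u(w)=-4$ exactly on the $l-1$ nonzero shifts with $w\equiv 0\pmod 4$. For $a=1,2,3$ the signed sums cancel identically: for $a=2$ the relevant coefficient is $(-1)^{c_0+c_2}+(-1)^{c_1+c_3}=-1+1=0$, and for $a=1,3$ the four terms $\pm C_s(b+\delta),\pm C_s(b-\delta)$ pair off with opposite signs, so $C_u(w)=0$ for all $3l$ shifts with $4\nmid w$, with no delicate degenerate sub-cases at $b\equiv 0$ or $b\equiv\pm\delta$ to worry about.

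Consequently your final dichotomy ``$l\mid w$ versus $l\nmid w$'' is the wrong one, and the count attached to it is false: the nonzero multiples of $l$ in $\mathbb{Z}_{4l}$ are $l,2l,3l$, i.e.\ exactly three of them, not $l-1$; the set of size $l-1$ is the set of nonzero multiples of $4$. Carried out correctly, your own method yields $C_u(w)=-4$ for $l-1$ nonzero shifts and $C_u(w)=0$ for the remaining $3l$, which is the reverse of the counts displayed in the theorem as printed (and it is the distribution consistent with the almost difference set parameters $(4p,2p-1,p-2,p-1)$ quoted later in this section and with \cite{AD}; a quick check with $l=3$, $s=(1,1,0)$, $\delta=0$ confirms it). So the proposal as written does not prove the stated distribution: it ``confirms'' the printed counts only through the erroneous $l\mid w$ bookkeeping, where a correct execution would instead have exposed the swapped counts. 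Redo the tabulation keyed to $w\bmod 4$ as above, and you obtain a complete proof of the corrected statement, with $-4$ occurring $l-1$ times and $0$ occurring $3l$ times.
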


 \noindent Additionally, Arasu et al. \cite{AD} were able to explicitly identify the support of this sequence.

 \begin{thm} \label{thm51}
 Let $D$ be the support of the sequence $\{u(t)\}$ in Theorem \ref{thm50}, and let $C$ be the support of the underlying sequence $\{s(t)\}$. Then,
 \begin{eqnarray*}
 D &=& ((l+1)C (\text{mod }4l)) \hs \cup \hs ((l+1)(C-\delta)^* + 3l(\text{mod } 4l))\\
 &\cup& ((l+1)C^*+2l(\text{mod }4l)) \hs \cup \hs ((l+1)(C-\delta)^* + 3l(\text{mod }4l))
 \end{eqnarray*}
 where $C^*$ and $(C-\delta)^*$  denote the complement of $C$ and $C-\delta$ in $\mathbb{Z}_l$, respectively. Furthermore,
 $$
 \phi (D) = \{0\} \times C \hs \cup \hs \{1\} \times (C-\delta)^* \hs \cup \hs \{2\} \times C^* \hs \cup \hs \{3\} \times (C-\delta)^*
 $$
 where $\phi(x) := (x (\text{mod }4), x(\text{mod }l))$ is the isomorphism from $\mathbb{Z}_{4l}$ to $\mathbb{Z}_4 \times \mathbb{Z}_l$.
 \end{thm}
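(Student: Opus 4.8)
The plan is to carry everything through the Chinese Remainder decomposition of $\mathbb{Z}_{4l}$. Since $\{s(t)\}$ has ideal autocorrelation its period satisfies $l \equiv 3 \pmod 4$; in particular $\gcd(4,l)=1$, so the stated map $\phi(x) = (x \bmod 4, x \bmod l)$ really is a ring isomorphism $\mathbb{Z}_{4l} \to \mathbb{Z}_4 \times \mathbb{Z}_l$. First I would note that this is exactly what makes the definition of $\{u(t)\}$ legitimate: for each residue $t$ modulo $4l$ the pair $(i,j)$ determined by $i \equiv t \pmod 4$ and $j \equiv t \pmod l$ is simply $\phi(t)$, and as $t$ runs over $\mathbb{Z}_{4l}$ the value $\phi(t)$ runs bijectively over all of $\mathbb{Z}_4 \times \mathbb{Z}_l$. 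Hence $t \in D$ if and only if $m_{i,j} = 1$, i.e.\ $\phi(D) = \{(i,j) : m_{i,j}=1\}$.

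Next I would read off the support of each row of $M$ inside $\mathbb{Z}_l$. By definition of a characteristic sequence $s(j)=1 \iff j \in C$, so $\bar s(j) = 1+s(j) = 1 \iff j \notin C \iff j \in C^*$, and $\bar s(j+\delta) = 1 \iff j+\delta \notin C \iff j \in (C-\delta)^*$. Thus rows $0,1,2,3$ of $M$ have supports $R_0 = C$, $R_1 = (C-\delta)^*$, $R_2 = C^*$, $R_3 = (C-\delta)^*$ respectively, and combining with the previous step gives $\phi(D) = \bigcup_{i=0}^{3} \{i\} \times R_i$, which is the claimed description of $\phi(D)$.

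Finally I would transport this back through $\phi^{-1}$ to recover $D$ as a subset of $\mathbb{Z}_{4l}$. The arithmetic to exploit is $l+1 \equiv 1 \pmod l$ together with $l+1 \equiv 0 \pmod 4$ (this uses $l \equiv 3 \pmod 4$); hence for any $A \subseteq \mathbb{Z}_l$, taken with integer representatives, and any integer $c$, the set $(l+1)A + cl$ reduces modulo $l$ to $A$ and modulo $4$ to the constant $cl \bmod 4$. Since $0\cdot l \equiv 0$, $3l \equiv 1$, $2l \equiv 2$ and $1\cdot l \equiv 3 \pmod 4$, taking $c = 0, 3, 2, 1$ identifies $\phi^{-1}(\{0\}\times R_0)$, $\phi^{-1}(\{1\}\times R_1)$, $\phi^{-1}(\{2\}\times R_2)$, $\phi^{-1}(\{3\}\times R_3)$ with $(l+1)C$, $(l+1)(C-\delta)^* + 3l$, $(l+1)C^* + 2l$ and $(l+1)(C-\delta)^* + l$ modulo $4l$; their union is $D$, which is the displayed formula (with the shift on the last coset being $+l$).

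I do not expect a real obstacle: the whole argument is bookkeeping with the isomorphism $\phi$. The step needing the most care is the mod-$4$ computation in the last paragraph, where one must use $l \equiv 3 \pmod 4$ rather than $l \equiv 1$ to attach each coset representative $l$, $2l$, $3l$ to the correct row index $i$; it is also worth checking that the four pieces lie in distinct residue classes modulo $4$ and hence are pairwise disjoint, so the cardinalities add as expected.
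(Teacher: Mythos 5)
Your proof is correct. The paper states this theorem without giving a proof (it is quoted from Arasu et al.\ \cite{AD}), and your argument --- reading off the row supports of $M$ as $C$, $(C-\delta)^*$, $C^*$, $(C-\delta)^*$ and transporting them back through the CRT isomorphism $\phi$, using $l \equiv 3 \pmod 4$ so that $l+1 \equiv 0 \pmod 4$ and the congruence $3c \equiv i \pmod 4$ pins down the coset shift $cl$ for each row $i$ --- is precisely the bookkeeping the result requires, and every step checks out. You are also right to flag the displayed formula for $D$: as printed, the fourth piece repeats $(l+1)(C-\delta)^* + 3l$, but consistency with the stated $\phi(D)$ (row $3$ must land in the residue class $3$ modulo $4$, i.e.\ $c=1$) forces it to be $(l+1)(C-\delta)^* + l \pmod{4l}$, exactly as in your derivation.
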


 Theorem \ref{thm50} beckons us to look at sequences with ideal autocorrelation. In the next lemma, we highlight the relationship between these sequences and certain difference sets.

 \begin{lem}
 A binary sequence $\{s(t)\}$ with period $l \equiv 3(\text{mod }4)$ has ideal autocorrelation if and only if its support $D = \{t \in \mathbb{Z}_l \hs | \hs s(t) = 1\}$ is a $(l, \frac{l-1}{2}, \frac{l-3}{4})$ or $(l, \frac{l+1}{2}, \frac{l+1}{4})$-difference set in $\mathbb{Z}_l$.
 \end{lem}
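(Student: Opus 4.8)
The plan is to push everything through Lemma \ref{lem2}, which says $C_s(w) = l - 4(k - d_D(w))$ with $k = |D|$, together with the observation that since $l \equiv 3 \pmod 4$ the number $\frac{l+1}{4}$ is an integer. I would split into the two implications.

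For the ($\Leftarrow$) direction, suppose $D$ is a $(l,k,\lambda)$-difference set with one of the two listed parameter triples. Then $d_D(w) = \lambda$ for every nonzero $w$, and a one-line computation gives $k - \lambda = \frac{l+1}{4}$ in \emph{both} cases. Substituting into Lemma \ref{lem2} yields $C_s(w) = l - 4\cdot\frac{l+1}{4} = -1$ for all $w \not\equiv 0$, which is exactly the definition of ideal autocorrelation.

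For the ($\Rightarrow$) direction, suppose $\{s(t)\}$ has ideal autocorrelation, so $C_s(w) = -1$ for every nonzero $w$. Lemma \ref{lem2} then forces $d_D(w) = k - \frac{l+1}{4}$, a fixed nonnegative integer, so $D$ is a $(l,k,\lambda)$-difference set with $\lambda = k - \frac{l+1}{4}$. To pin down $k$, I would invoke the difference-set counting identity $k(k-1) = \lambda(l-1)$ (the $t=0$ case of Theorem \ref{thm13}, equivalently Proposition \ref{prop6} read off via the principal character). Substituting $\lambda = k - \frac{l+1}{4}$ and clearing denominators collapses this to $(2k - l)^2 = 1$, so $k = \frac{l-1}{2}$ or $k = \frac{l+1}{2}$; the matching values $\lambda = \frac{l-3}{4}$ and $\lambda = \frac{l+1}{4}$ then follow from $\lambda = k - \frac{l+1}{4}$, giving precisely the two parameter sets asserted (which are complements of one another).

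There is no genuine obstacle; the only point needing a moment's attention is that $\frac{l+1}{4} \in \mathbb{Z}$, which is exactly what the hypothesis $l \equiv 3 \pmod 4$ provides, so that the conclusion "$d_D(w)$ is constant" is legitimate rather than vacuous. The rest is elementary manipulation of the quadratic $(2k - l)^2 = 1$.
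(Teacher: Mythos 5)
Your proposal is correct and follows essentially the same route as the paper's proof: Lemma \ref{lem2} turns ideal autocorrelation into the statement that $d_D(w) = k - \frac{l+1}{4}$ for all nonzero $w$, and the counting identity $k(k-1)=\lambda(l-1)$ then pins down $k=\frac{l\pm 1}{2}$ (your $(2k-l)^2=1$ is just this computation made explicit). The only differences are presentational — you split the equivalence into two directions where the paper runs a single if-and-only-if chain — so there is nothing substantive to add.
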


 \begin{proof}
 A sequence $\{s(t)\}$ with period $l$ is said to have ideal autocorrelation if $l \equiv 3(\text{mod }4)$ and $C_s(w) = -1$ for all nonzero $w \in \mathbb{Z}_l$. By Lemma \ref{lem2},
$$
C_s(w) = l-4(k-d_D(w))
$$
where $D$ is the support of $\{s(t)\}$ and $k = |D|$. Thus, $\{s(t)\}$ has ideal autocorrelation if and only if
$d_D(w) = k - \frac{l+1}{4}$ for all nonzero $w \in \mathbb{Z}_l$. That is, $D$ is a $(l, k, k-\frac{l+1}{4})$-difference set in $\mathbb{Z}_l$. Further, from the necessary condition of difference sets, we have that
$$
k(k-1) = \bigg(k-\frac{l+1}{4}\bigg)(l-1)
$$
and solving for $k$, we obtain $k = \frac{l\pm1}{2}$. Hence $\{s(t)\}$ has ideal autocorrelation if and only if its support is a $(l,\frac{l-1}{2}, \frac{l-3}{4})$ or $(l, \frac{l+1}{2}, \frac{l+1}{4})$-difference set in $\mathbb{Z}_l$.
 \end{proof}

 Difference sets with the parameters $(l, \frac{l-1}{2}, \frac{l-3}{4})$ or $(l, \frac{l+1}{2}, \frac{l+1}{2})$ are called \textit{Payley Hadamard difference sets}. Cyclic Payley-Hadamard difference sets include the following classes:
 \begin{enumerate}
 \item with parameters $(p, \frac{p-1}{2}, \frac{p-3}{4})$, where $p \equiv 3(\text{mod }4)$ is prime, and the difference set just consists of all the quadratic residues in $\mathbb{Z}_p$;
 \item with parameters $(2^t-1, 2^{t-1}-1, 2^{t-2}-1)$, for descriptions of difference sets with these parameters see \cite{Dill}, \cite{Dill1}, \cite{GMW}, \cite{Pott}, and \cite{Xi};
 \item with parameters $(l, \frac{l-1}{2}, \frac{l-3}{4})$, $l = p(p+2)$ and both $p$ and $p+2$ are primes. These are the twin-prime difference sets, and may be defined as $\{(g,h) \in \mathbb{Z}_p \times \mathbb{Z}_{p+2} \hs | \hs g, h \neq 0 \text{ and } \chi(g)\chi(h) =1 \} \hs \cup \hs \{(g,0) \hs | \hs g \in \mathbb{Z}_p\}$ where $\chi(x) = 1$ if $x$ is a nonzero square in the corresponding field, and $\chi(x) = -1$ otherwise (see \cite{JP}).
 \item with parameters $(p, \frac{p-1}{2}, \frac{p-3}{4})$, where $p$ is a prime of the form $p = 4s^2+27$. They are the cyclotomic difference sets $D = C_0^6 \hs \cup \hs C_1^6 \hs \cup \hs C_3^6$ (see \cite{Jung}).
 \end{enumerate}

 \begin{thm} \cite{AD}\label{thm52}
 The construction in Theorem \ref{thm50} gives the following families of binary sequences of period $n$ with optimal autocorrelation:
 \begin{enumerate}
 \item $n = 4p$ where $p \equiv 3(\text{mod }4)$ is any prime;
 \item $n = 4(2^t-1)$, for any integer $t \geq 1$;
 \item $n = 4p(p+2)$, where $p$ and $p+2$ are any twin primes;
 \item $n = 4p$ where $p = 4s^2+27$ is any prime.
 \end{enumerate}
 \end{thm}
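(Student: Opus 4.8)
The plan is to reduce the statement to the catalogue of cyclic Paley--Hadamard difference sets assembled just above, since the analytic heart of the matter is already carried by Theorem \ref{thm50}. First I would recall that Theorem \ref{thm50}, applied to any binary sequence $\{s(t)\}$ of period $l$ with ideal autocorrelation and any fixed shift $\delta$, produces a binary sequence $\{u(t)\}$ of period $4l$ whose autocorrelation takes only the values $4l$, $-4$, and $0$. Since $4l \equiv 0 \pmod 4$, the spectrum $\{-4,0\}$ on the nonzero shifts is exactly what the definition of optimal autocorrelation demands in that residue class. Hence every integer $l$ for which a binary sequence of period $l$ with ideal autocorrelation exists yields, via this construction, a binary sequence of period $n = 4l$ with optimal autocorrelation.

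Next I would invoke the lemma above characterizing ideal autocorrelation: a binary sequence of period $l$ has ideal autocorrelation precisely when $l \equiv 3 \pmod 4$ and its support is a cyclic difference set with parameters $(l,\frac{l-1}{2},\frac{l-3}{4})$ or $(l,\frac{l+1}{2},\frac{l+1}{4})$, that is, a cyclic Paley--Hadamard difference set. The problem therefore reduces to running through the known classes of such difference sets and reading off $n = 4l$ in each case.

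The third step is this run-through over the four classes listed above. The Paley class contributes $l = p$ with $p \equiv 3 \pmod 4$ prime, giving $n = 4p$. The class with parameters $(2^t-1, 2^{t-1}-1, 2^{t-2}-1)$ contributes $l = 2^t-1$; one checks $2^t - 1 \equiv 3 \pmod 4$ for $t \ge 2$, giving $n = 4(2^t-1)$. The twin-prime class contributes $l = p(p+2)$ with $p$ and $p+2$ both prime; since $p$ and $p+2$ are odd and differ by $2$, one of them is $\equiv 1$ and the other $\equiv 3 \pmod 4$, so their product is $\equiv 3 \pmod 4$, whence $l \equiv 3 \pmod 4$ and $n = 4p(p+2)$. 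Finally the sextic-cyclotomic class $D = C_0^6 \cup C_1^6 \cup C_3^6$ contributes $l = p$ with $p = 4s^2+27$ prime; here $4s^2 \equiv 0$ and $27 \equiv 3 \pmod 4$ force $l \equiv 3 \pmod 4$, giving $n = 4p$. In every case the hypotheses of Theorem \ref{thm50} are met verbatim, producing the four claimed families.

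The substantive work---that the interleaving really produces the spectrum $\{4l,-4,0\}$---has already been done in Theorem \ref{thm50}, so what remains is bookkeeping: the elementary congruences $l \equiv 3 \pmod 4$ needed for ``ideal autocorrelation'' even to be defined, together with the observation that the four input classes are exactly the cyclic Paley--Hadamard difference sets currently available, so that the list of output families is complete relative to present knowledge. The only genuine wrinkle is the small-$t$ boundary behaviour in the second class (where $t=1$ gives the degenerate period $l=1$), which I would flag explicitly rather than absorb into the congruence argument.
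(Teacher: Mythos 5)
Your proposal is correct and follows essentially the same route as the paper: take the characteristic sequences of the four known classes of cyclic Paley--Hadamard difference sets (which have ideal autocorrelation by the lemma relating ideal autocorrelation to $(l,\frac{l-1}{2},\frac{l-3}{4})$ or $(l,\frac{l+1}{2},\frac{l+1}{4})$-difference sets), feed them into the interleaving construction of Theorem \ref{thm50}, and read off the four families of period $n=4l$ with optimal autocorrelation. Your extra bookkeeping on the congruences $l\equiv 3\pmod 4$ and the degenerate $t=1$ case is a reasonable elaboration of what the paper leaves implicit.
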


 \begin{proof}
 The characteristic sequences of the difference sets above have ideal autocorrelation. With these base sequences, the construction in Theorem \ref{thm50} gives the four classes of binary sequences of period $n$ described above.
 \end{proof}

 Finally, since the above sequences have optimal autocorrelation, it follows that their corresponding supports are almost difference sets. The parameters of these sets are given in the following theorem.

\begin{thm} \cite{AD}
The support $D$ (laid out in Theorem \ref{thm51}) of the four classes of binary sequences with optimal autocorrelation in Theorem \ref{thm52}  gives four classes of cyclic almost difference sets with the following parameters:
\begin{enumerate}
\item $(4p, 2p-1, p-2, p-1)$, where $p \equiv 3(\text{mod }4)$ is any prime;
\item $(4(2^t-1), 2^{t+1}-3, 2^t-3, 2^t-2)$, for any integer $t \geq 1$;
\item $(4p(p+2), 2p(p+2)-1, p(p+2)-2, p(p+2)-1)$, where $p$ and $p+2$ are twin primes;
\item $(4p, 2p-1, p-2, p-1)$, where $p = 4s^2+27$ is any prime.
\end{enumerate}
\end{thm}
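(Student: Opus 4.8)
The plan is to handle all four families uniformly. Write the period as $n=4l$, where $l=p$ in family (1), $l=2^t-1$ in family (2), $l=p(p+2)$ in family (3), and $l=p$ with $p=4s^2+27$ in family (4); in each case $l$ is odd and $l\equiv 3\pmod 4$, which is precisely what makes the isomorphism $\phi\colon\mathbb{Z}_{4l}\to\mathbb{Z}_4\times\mathbb{Z}_l$ and the coset bookkeeping of Theorem~\ref{thm51} available. By Theorem~\ref{thm52}, each of the four binary sequences $\{u(t)\}$ in question arises from the construction of Theorem~\ref{thm50} applied to a period-$l$ sequence with ideal autocorrelation, so it has the three-level autocorrelation recorded there: $C_u(0)=4l$, and the two nonzero values $0$ and $-4$ occur among the remaining $4l-1$ shifts. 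The bridge theorem at the close of Section~2 then gives at once that the support $D$ of each such sequence is a cyclic almost difference set in $\mathbb{Z}_{4l}$, and everything reduces to identifying the parameters $(v,k,\lambda,t)=(4l,k,\lambda,t)$.

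First I would compute $k=|D|$ from the explicit support description in Theorem~\ref{thm51}. Since complementing a binary sequence leaves its autocorrelation function unchanged, the underlying period-$l$ sequence may be taken to have support equal to the member $C$ of the relevant complementary pair of Paley--Hadamard difference sets with $|C|=\tfrac{l+1}{2}$ (each of the four difference-set families of Theorem~\ref{thm52} admits such a normalization). Reading off from $\phi(D)=\{0\}\times C\ \cup\ \{1\}\times(C-\delta)^{*}\ \cup\ \{2\}\times C^{*}\ \cup\ \{3\}\times(C-\delta)^{*}$ gives $k=|C|+3\bigl(l-|C|\bigr)=3l-2|C|=2l-1$, independent of the shift $\delta$. (Starting instead from the size-$\tfrac{l-1}{2}$ difference set gives $k=2l+1$ and the complementary almost difference set, consistent with Theorem~\ref{comp}.)

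Finally I would pin down $\lambda$ and $t$ via Lemma~\ref{lem2}: $C_u(w)=n-4\bigl(k-d_D(w)\bigr)=4l-4\bigl(2l-1-d_D(w)\bigr)$, so $C_u(w)=0$ forces $d_D(w)=l-1$ and $C_u(w)=-4$ forces $d_D(w)=l-2$. Hence $d_D$ takes exactly the two values $l-2<l-1$, so $\lambda=l-2$, and $t$ is the number of nonzero $w$ with $d_D(w)=l-2$, equivalently with $C_u(w)=-4$. Parseval gives $\sum_{w}C_u(w)=\bigl(\sum_{t}(-1)^{u(t)}\bigr)^2=(n-2k)^2=4$; since $C_u(0)=4l$ and every other $C_u(w)\in\{0,-4\}$, the value $-4$ is attained exactly $(4l-4)/4=l-1$ times, so $t=l-1$. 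As a check, the necessary condition of Theorem~\ref{thm13} holds for $(4l,2l-1,l-2,l-1)$, both sides equalling $2(l-1)(2l-1)$. Substituting $l=p$, $l=2^t-1$, $l=p(p+2)$, and $l=p$ then yields exactly the four parameter sets $(4p,2p-1,p-2,p-1)$, $(4(2^t-1),2^{t+1}-3,2^t-3,2^t-2)$, $(4p(p+2),2p(p+2)-1,p(p+2)-2,p(p+2)-1)$, and $(4p,2p-1,p-2,p-1)$ claimed.

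The computations of $|D|$ and of $\lambda$ are routine arithmetic. The one point requiring care — the main obstacle — is the multiplicity bookkeeping: one must be sure that the smaller autocorrelation value $-4$ is attained only $l-1$ times, since mis-assigning the counts $3l$ and $l-1$ to the two nonzero autocorrelation values would replace $t$ by $v-1-t$ and produce a spurious parameter set. The Parseval identity (equivalently, the Theorem~\ref{thm13} consistency check) settles this cleanly. One could instead avoid any reliance on the autocorrelation counts by expanding $\phi(D)(X)\phi(D)(X^{-1})$ directly in $\mathbb{Z}[\mathbb{Z}_4\times\mathbb{Z}_l]$ using the group-ring identity satisfied by the Paley--Hadamard difference set $C$, but that route is more laborious than reusing the machinery already assembled in Theorems~\ref{thm50}--\ref{thm52}.
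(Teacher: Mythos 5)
Your proof is correct, and it follows the same overall route the survey intends: the paper gives no separate argument for this theorem, treating it as an immediate consequence of the bridge theorem of Section 2 together with Theorems \ref{thm50}--\ref{thm52} (the sequence has optimal autocorrelation, hence its support is an ADS, and one reads off the parameters). What you do differently, and what genuinely strengthens the argument, is the multiplicity bookkeeping: instead of quoting the value distribution printed in Theorem \ref{thm50}, you recover it from $k=|D|=3l-2|C|=2l-1$ and the Parseval identity $\sum_{w}C_u(w)=(n-2k)^2=4$, which forces $C_u(w)=-4$ for exactly $l-1$ nonzero shifts and $0$ for the remaining $3l$. This matters because the counts as printed in Theorem \ref{thm50} of this survey are swapped ($-4$ for $3l$ shifts and $0$ for $l-1$); taken literally they would give $t=3l$, which is incompatible with $\sum_{w\neq 0}d_D(w)=k(k-1)$ for $k=2l\pm 1$, so your counting (equivalently the Theorem \ref{thm13} check) is exactly the right way to settle $t=l-1$ and it quietly corrects that misprint. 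Your normalization remark is also the right one: the Paley--Hadamard sets listed in Section \ref{sec9} have size $\frac{l-1}{2}$, and feeding those directly into Theorem \ref{thm51} gives the complementary $(4l,2l+1,l,l-1)$-ADS, so the stated parameters hold after replacing $C$ by its complement (or, equivalently, up to complementation via Theorem \ref{comp}), which you state explicitly. The only cosmetic caveat is the degenerate case $t=1$ in family (2), where $l=1$ and the parameter set is vacuous; your hypothesis $l\equiv 3\pmod 4$ implicitly assumes $t\geq 2$, which is also what the original construction requires.
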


\section{Almost Difference Sets from Direct Product Constructions}

In this section, we highlight some almost difference sets which reside in the direct product of two abelian groups.

\noindent The first construction comes as a special case of the following result due to Jungnickel.

\begin{thm} \cite{Jung1}
Let $D_1$ be an ordinary $(v,k,\lambda)$-difference set in a group $A$, and let $D_2$ be a difference set with parameters $(4u^2, 2u^2-u, u^2-u)$ in a group $B$. Then
$$
D := (D_2 \times D_1^*) \hs \cup \hs (D_2^* \times D_1)
$$
is a divisible difference set in $B \times A$ relative to the subgroup $\{1\} \times A$, with parameters $(4u^2, v, 2u^2v+2ku-uv, \lambda_1, \lambda_2)$, where
\begin{eqnarray*}
\lambda_1 &=& (2u^2-u)(v-2k)+4u^2\lambda \\
\lambda_2 &=& u^2v-uv+2ku
\end{eqnarray*}
and $D_2^*$ denotes the complement of $D_2$.
\end{thm}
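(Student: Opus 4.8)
The plan is to verify directly the group-ring identity characterizing a divisible difference set, working in $\mathbb{Z}[B\times A]\cong\mathbb{Z}[B]\otimes\mathbb{Z}[A]$ and identifying each subset with its simple quantity. Since $D_2$ and $D_2^{*}$ are disjoint, the union defining $D$ is disjoint and $D(X)=D_2(X)D_1^{*}(X)+D_2^{*}(X)D_1(X)$. Using $D_1^{*}(X)=A(X)-D_1(X)$ and $D_2^{*}(X)=B(X)-D_2(X)$, I would rearrange this into the more convenient form
\[
D(X)=D_2(X)A(X)-D_1(X)\bigl(D_2(X)-D_2^{*}(X)\bigr),
\]
and set $\Delta(X)=D_2(X)-D_2^{*}(X)$. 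Here $A(X)$ and $B(X)$, being group sums of subgroups, are fixed by $X\mapsto X^{-1}$, and $A(X)B(X)=G(X)$ with $G=B\times A$.

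First I would record the elementary products $A(X)^{2}=vA(X)$, $E(X)A(X)=|E|\,A(X)$ for any subset $E$ of $A$, and $A(X)B(X)=G(X)$. Next, Proposition~\ref{prop6} gives $D_1(X)D_1(X^{-1})=k\,\underline{1}+\lambda\bigl(A(X)-\underline{1}\bigr)$. Applying the same proposition to $D_2$ (a $(4u^{2},2u^{2}-u,u^{2}-u)$-difference set) together with $D_2^{*}=B-D_2$ yields $D_2(X)D_2(X^{-1})=(2u^{2}-u)\,\underline{1}+(u^{2}-u)\bigl(B(X)-\underline{1}\bigr)$, $D_2^{*}(X)D_2^{*}(X^{-1})=(2u^{2}+u)\,\underline{1}+(u^{2}+u)\bigl(B(X)-\underline{1}\bigr)$, and $D_2(X)D_2^{*}(X^{-1})=D_2^{*}(X)D_2(X^{-1})=u^{2}\bigl(B(X)-\underline{1}\bigr)$. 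Combining these four pieces gives the two relations I actually need: $\Delta(X)\Delta(X^{-1})=4u^{2}\,\underline{1}$ (the statement that $D_2$ develops a Hadamard matrix) and $D_2(X)\Delta(X^{-1})=\Delta(X)D_2(X^{-1})=2u^{2}\,\underline{1}-u\,B(X)$. Every cancellation here rests on $(2u^{2}-u)-(u^{2}-u)=u^{2}$ and $(u^{2}-u)-u^{2}+u=0$, i.e.\ on $D_2$ lying in exactly this Menon/Hadamard parameter family.

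Now I would multiply out $D(X)D(X^{-1})$ from the rearranged form. The two cross terms contribute through $A(X)\Delta(X^{-1})$-type products whose $B(X)$-part is killed off by $A(X)B(X)=G(X)$, so no residual ``$B$-direction'' survives; together with $D_1(X)D_1(X^{-1})\cdot\Delta(X)\Delta(X^{-1})=4u^{2}\bigl(k\,\underline{1}+\lambda(A(X)-\underline{1})\bigr)$ and $D_2(X)D_2(X^{-1})\cdot A(X)^{2}$, everything collapses to a $\mathbb{Z}$-combination of $\underline{1}$, $A(X)$ and $G(X)$ alone. Writing $A(X)=H(X)$ for the subgroup sum, I would then read off that $D(X)D(X^{-1})=\kappa\,\underline{1}+\lambda_1\bigl(H(X)-\underline{1}\bigr)+\lambda_2\bigl(G(X)-H(X)\bigr)$ with $\kappa=(2u^{2}-u)(v-k)+(2u^{2}+u)k=2u^{2}v-uv+2uk$, $\lambda_2=u^{2}v-uv+2uk$ and $\lambda_1=(2u^{2}-u)(v-2k)+4u^{2}\lambda$; by the group-ring characterization of divisible difference sets (the last Proposition of Section~3) this is precisely the assertion.

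The computation is essentially routine bookkeeping once the facts above are in place. The only step that genuinely uses more than ``$D_1,D_2$ are difference sets'' is the one flagged in the second paragraph --- that $\Delta(X)\Delta(X^{-1})$ is a scalar and that the $B$-direction disappears after multiplying by $A(X)$ --- and that is exactly where the special parameters $(4u^{2},2u^{2}-u,u^{2}-u)$ of $D_2$ get consumed. A secondary, purely notational caution is that the $\lambda$ of $D_1$ must not be conflated with the output parameters $\lambda_1,\lambda_2$.
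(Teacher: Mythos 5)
Your proposal is correct. Note that the paper itself gives no proof of this statement; it is quoted from Jungnickel \cite{Jung1}, so there is nothing internal to compare against, but your direct verification via the group-ring characterization of divisible difference sets (the last proposition of Section~3) is exactly the natural argument in the paper's framework, and the details check out: with $\Delta(X)=D_2(X)-D_2^{*}(X)=2D_2(X)-B(X)$ one indeed gets $\Delta(X)\Delta(X^{-1})=4u^{2}\,\underline{1}$ and $D_2(X)\Delta(X^{-1})=2u^{2}\,\underline{1}-uB(X)$ from the Menon--Hadamard parameters, and expanding
$$
D(X)D(X^{-1})=D_2(X)D_2(X^{-1})A(X)^{2}-kA(X)\bigl(D_2(X)\Delta(X^{-1})+\Delta(X)D_2(X^{-1})\bigr)+4u^{2}D_1(X)D_1(X^{-1})
$$
yields $4u^{2}(k-\lambda)\,\underline{1}+\bigl(u^{2}v-4u^{2}k+4u^{2}\lambda\bigr)H(X)+\bigl(u^{2}v-uv+2uk\bigr)G(X)$ with $H(X)$ the sum over $\{1\}\times A$, which matches $\kappa\,\underline{1}+\lambda_1(H(X)-\underline{1})+\lambda_2(G(X)-H(X))$ for precisely the stated $\kappa,\lambda_1,\lambda_2$. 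Only a cosmetic point: the $B(X)$-parts of the cross terms are not ``killed off'' but absorbed into $G(X)$ via $A(X)B(X)=G(X)$, contributing the $2uk$ in $\lambda_2$; your final bookkeeping already reflects this correctly.
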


Note that a divisible difference set is an almost difference set if and only if $|\lambda_1 - \lambda_2| = 1$. That being said, we observe a special case of the above theorem.

\begin{cor}\cite{Jung1} \label{cor55}
Let $D_1$ be an ordinary $(l, \frac{l-1}{2}, \frac{l-3}{4})$ $($resp. $(l, \frac{l+1}{2}, \frac{l+1}{4})$$)$ difference set in $\mathbb{Z}_l$ and let $D_2$ be a trivial difference set in $\mathbb{Z}_4$ with parameters $(4,1,0)$. Then,
$$
D := (D_2 \times D_1^*) \hs \cup \hs (D_2^* \times D_1) = (\{i\} \times D_1^*) \hs \cup \hs (\{i+1, i+2, i+3\} \times D_1)
$$
 for any $i \in \mathbb{Z}_4$ is a $(4l, 2l-1, l-2, l-1)$ $($resp. $(4l, 2l+1, l, l-1)$$)$ almost difference set in $\mathbb{Z}_4 \times \mathbb{Z}_l$.
\end{cor}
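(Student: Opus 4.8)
The plan is to derive Corollary \ref{cor55} directly from the previous theorem by specializing the parameters. First I would set $D_2$ to be the trivial $(4,1,0)$-difference set in $\mathbb{Z}_4$, which amounts to taking $u = \tfrac12$ in the theorem's family $(4u^2, 2u^2-u, u^2-u)$; one checks $4u^2 = 1$ is wrong, so instead I would argue directly: a singleton $\{i\}$ in $\mathbb{Z}_4$ is trivially a $(4,1,0)$-difference set since no nonzero element of $\mathbb{Z}_4$ arises as a difference of two elements of a singleton. With $D_2 = \{i\}$ we get $D_2^* = \{i+1,i+2,i+3\}$, so the set $D = (D_2 \times D_1^*) \cup (D_2^* \times D_1)$ is exactly the displayed set, and its size is $|D_2|\,|D_1^*| + |D_2^*|\,|D_1| = 1\cdot(l-k) + 3\cdot k = l + 2k$, which gives $2l-1$ when $k = \tfrac{l-1}{2}$ and $2l+1$ when $k = \tfrac{l+1}{2}$.

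Next I would compute the difference function $d_D(x)$ for $x = (a,b) \in \mathbb{Z}_4 \times \mathbb{Z}_l$, splitting into cases on $a$. Since $D$ lives over the two fibers $\{i\}$ and $\{i+1,i+2,i+3\}$, translating by $(a,b)$ permutes these fibers, and $d_D(x)$ is a sum of intersection counts of the form $|(D_1 + b) \cap D_1|$, $|(D_1 + b)\cap D_1^*|$, $|(D_1^* + b)\cap D_1|$, or $|(D_1^*+b)\cap D_1^*|$ depending on how the translated fibers line up. Each of these is expressible via $d_{D_1}(b) = |(D_1+b)\cap D_1|$ and the elementary identities $|(D_1+b)\cap D_1^*| = |(D_1^*+b)\cap D_1| = k - d_{D_1}(b)$ and $|(D_1^*+b)\cap D_1^*| = (l-2k) + d_{D_1}(b)$. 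Plugging in the difference set property $d_{D_1}(b) = \lambda = \tfrac{l-3}{4}$ (resp. $\tfrac{l+1}{4}$) for all nonzero $b$, and handling the four cases $a = 0$ (where $b \neq 0$) and $a \in \{1,2,3\}$ (where $b$ is arbitrary) separately, I would obtain that $d_D(x)$ takes exactly two values differing by $1$: the smaller value $l-2$ (resp. $l$) occurring for the $l-1$ nonzero elements of the subgroup $\{i_0\}\times\mathbb{Z}_l$ — wait, more carefully, for a specific set of $l-1$ elements — and the larger value on the rest. Matching this against the ADS definition yields the claimed parameters $(4l, 2l-1, l-2, l-1)$ and $(4l, 2l+1, l, l-1)$.

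Alternatively — and this is cleaner — I would simply invoke the previous theorem as a black box: it asserts that $D$ is a divisible difference set relative to $\{1\}\times A$ with parameters $(4u^2, v, \ldots, \lambda_1, \lambda_2)$, and by the remark immediately preceding the corollary, a DDS is an ADS precisely when $|\lambda_1 - \lambda_2| = 1$. So the only real content is: substitute $v = l$, $k = \tfrac{l\mp1}{2}$, $\lambda = \tfrac{l-3}{4}$ or $\tfrac{l+1}{4}$, and the $(4,1,0)$ parameters into the formulas for $\lambda_1, \lambda_2$, verify $|\lambda_1 - \lambda_2| = 1$, and read off that the common smaller value of $d_D$ together with the count gives the stated $(4l, k', \lambda', t')$ tuple. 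The necessary condition of Theorem \ref{thm13} can be used as an independent consistency check on $(4l, 2l-1, l-2, l-1)$.

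The main obstacle is bookkeeping rather than conceptual: one must correctly track which translates of the fibers of $\mathbb{Z}_4$ coincide under addition of $a \in \{0,1,2,3\}$, and correctly identify the $t = l-1$ special elements (these should be the nonzero elements of a coset structure coming from $\{1\}\times A$, i.e. the subgroup relative to which the DDS is defined) versus the remaining $4l - l = 3l$ — no, $v - 1 - t = 4l - 1 - (l-1) = 3l$ — elements. A minor subtlety is that the theorem is stated with $u$ a positive integer and $4u^2$ the group order, whereas here the ``difference set'' $D_2$ has order-$4$ group but parameters $(4,1,0)$ corresponding formally to $u = \tfrac12$; I would sidestep this by either checking the DDS computation for $D_2 = \{i\}$ by hand (it is immediate) or by noting the theorem's proof only uses the group-ring identity $D_2(X)D_2(X^{-1}) = k_2\underline 1 + \lambda_2'(\,\cdots)$ which holds trivially for a singleton, so the conclusion persists.
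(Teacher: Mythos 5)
Your argument is essentially sound, but the obstruction you raise against using the preceding theorem is illusory: the trivial $(4,1,0)$-difference set is the member of Jungnickel's family $(4u^2, 2u^2-u, u^2-u)$ with $u=1$ (not $u=\frac{1}{2}$), since $4\cdot 1^2=4$, $2\cdot 1^2-1=1$, and $1^2-1=0$. Consequently the paper's (implicit) proof is exactly the ``cleaner'' black-box route you describe, with no sidestepping needed: substituting $u=1$, $v=l$, $k=\frac{l-1}{2}$, $\lambda=\frac{l-3}{4}$ gives set size $2u^2v+2ku-uv=2l-1$, $\lambda_1=(2u^2-u)(v-2k)+4u^2\lambda=l-2$ and $\lambda_2=u^2v-uv+2ku=2k=l-1$ (resp.\ $2l+1$, $l$, $l+1$ in the second case), so $|\lambda_1-\lambda_2|=1$; and since the divisible difference set is relative to the order-$l$ subgroup $\{0\}\times\mathbb{Z}_l$, the value $\lambda_1$ occurs exactly on its $l-1$ nonzero elements, giving $t=l-1$ and $v-1-t=3l$. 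Your direct fiber-by-fiber computation is a correct alternative that in effect reproves Jungnickel's theorem in this special case: one finds $d_D((0,b))=(l-2k+\lambda)+3\lambda=l-2k+4\lambda$ for $b\neq 0$, and $d_D((a,b))=2\bigl(k-d_{D_1}(b)\bigr)+2d_{D_1}(b)=2k$ for every $a\neq 0$ (including $b=0$), which yields the two values $l-2$ and $l-1$ (resp.\ $l$ and $l+1$) with the stated multiplicities; the ``specific set of $l-1$ elements'' you hesitated over is precisely the set of nonzero elements of $\{0\}\times\mathbb{Z}_l$, as you later surmise. So there is no genuine gap, only the $u=\frac{1}{2}$ slip, which should be deleted; what the direct computation buys is independence from the unproved exterior theorem, while the specialization at $u=1$ buys brevity and is what the paper intends.
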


By taking $D_1$ to be any of the Paley-Hadamard difference sets listed in Section \ref{sec9}, Corollary \ref{cor55} yields an almost difference  set and corresponding binary sequence with optimal autocorrelation.

Next we present several cyclotomic constructions.

The first two constructions are due to Ding, Helleseth, and Martinsen. They considered the finite field $GF(q)$ with $q \equiv 5(\text{mod }8)$. In this case, $q$ has a quadratic partition $q = x^2 + 4y^2$ with  $x \equiv \pm 1 (\text{mod } 4)$ \cite{St}.

\begin{thm}\cite{DH}
Let $i, j, l \in \{0, 1, 2, 3\}$ be three pairwise distinct integers, and define
$$
D = ( \{0\} \times (C_i^4 \hs \cup \hs C_j^4)) \hs \cup \hs (\{1\} \times (C_l^4 \hs \cup C_j^4))
$$
Then $D$ is a $(2q, q-1, \frac{q-3}{2}, \frac{3(q-1)}{2})$-ADS in $G = GF(2) \times GF(q)$ if
\begin{itemize}
\item $y = 1$ and $(i,j,k) = (0,1,3)$ or $(0,2,1)$, or
\item $x = 1$ and $(i,j,k) = (1,0,3)$ or $(0,1,2)$
\end{itemize}
\end{thm}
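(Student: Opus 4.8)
The plan is to compute $D(X)D(X^{-1})$ in the group ring $\mathbb{Z}[G]$ where $G = GF(2)\times GF(q)$, and then invoke Proposition \ref{prop4}. I would write $D = (\{0\}\times E_0) \cup (\{1\}\times E_1)$ where $E_0 = C_i^4 \cup C_j^4$ and $E_1 = C_l^4 \cup C_j^4$, so that $D(X) = X^{(0,\cdot)}E_0(X) + X^{(1,\cdot)}E_1(X)$ with the obvious abuse of notation for the $GF(2)$-coordinate. Expanding the product and using that the $GF(2)$-part contributes $X^0$ on the diagonal blocks and $X^1$ on the off-diagonal blocks, I get
$$
D(X)D(X^{-1}) = \big(E_0(X)E_0(X^{-1}) + E_1(X)E_1(X^{-1})\big) + X^1\big(E_0(X)E_1(X^{-1}) + E_1(X)E_0(X^{-1})\big),
$$
where the first bracket lives in $\{0\}\times GF(q)$ and the second in $\{1\}\times GF(q)$.

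Next I would evaluate each of the four $GF(q)$-group-ring products $E_a(X)E_b(X^{-1})$ using Corollary \ref{cor38} (more precisely its two-class generalization, Theorem \ref{CiCj}, together with Lemma \ref{lem10}). Since $q \equiv 5 \pmod 8$ we have $f = (q-1)/4$ odd, so $-C_k^4 = C_{k+2}^4$; this is what converts each $C_a^e(X)C_b^e(X^{-1})$ into a combination $\alpha\cdot\underline{1} + \sum_k \beta_k C_k^4(X)$ whose coefficients are sums of cyclotomic numbers of order $4$. Plugging in the explicit order-$4$ cyclotomic numbers from Appendix \ref{app3} (in terms of the quadratic partition $q = x^2 + 4y^2$, $x \equiv 1 \pmod 4$), the diagonal sum $E_0(X)E_0(X^{-1}) + E_1(X)E_1(X^{-1})$ should collapse to a constant multiple of $\underline{1}$ plus a uniform multiple of $GF(q)(X) = (C_0^4\cup C_1^4\cup C_2^4\cup C_3^4)(X)$, reflecting that the $\{1\}\times GF(q)$ part of the difference multiset is "flat." The point of the very specific hypotheses on $(x \text{ or } y) = 1$ and the triple $(i,j,l)$ is exactly to force the remaining nonzero coefficients among $\{\beta_0,\beta_1,\beta_2,\beta_3\}$ in the off-diagonal block to take only the two consecutive values $\lambda$ and $\lambda+1$; I would verify this case by case for the four listed triples, using the symmetry relations of Lemma \ref{lem8} to reduce the bookkeeping. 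A consistency check: $k = |D| = 2\cdot 2f = q-1$, and $k(k-1) = \lambda t + (\lambda+1)(v-1-t)$ with $v = 2q$, $\lambda = (q-3)/2$, $t = 3(q-1)/2$ should hold identically, which pins down that the counts are right.

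Finally I would assemble the pieces: collecting the constant terms gives $k\cdot\underline{1}$ with $k = q-1$; the $\{0\}\times GF(q)$-coordinates carry multiplicity $\lambda$ or $\lambda+1$ in a way dictated by the diagonal block (here the element $(1,0)\in G$ is handled separately — it is the nonidentity element of the subgroup $GF(2)\times\{0\}$ and its multiplicity comes from the constant/diagonal part); the $\{1\}\times GF(q)$-coordinates split into those $k$-values appearing $\lambda$ times and those appearing $\lambda+1$ times according to which $\beta_k$ equals which. Reading off the set $S$ of elements with multiplicity $\lambda$ and checking $|S| = t = 3(q-1)/2$ completes the verification via Proposition \ref{prop4}.

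\textbf{Main obstacle.} The genuine work is the order-$4$ cyclotomic number computation for the off-diagonal block $E_0(X)E_1(X^{-1}) + E_1(X)E_0(X^{-1})$: one must expand $(C_i^4\cup C_j^4)$ against $(C_l^4\cup C_j^4)$, which is a sum of $2\times 2$ products of cyclotomic classes, substitute the Appendix \ref{app3} formulas, and track the $f$-odd twist $-C_k^4 = C_{k+2}^4$ correctly — a place where sign errors and index shifts are easy. The reason the hypotheses are so rigid (only $y=1$ or $x=1$, only four specific triples) is that generically these coefficients spread over three or more distinct values; showing they hit exactly $\{\lambda,\lambda+1\}$ is where all the arithmetic of the special quadratic partition $q = 1 + 4y^2$ (resp. $q = x^2 + 4$) gets used, and I expect that to be the crux of the proof.
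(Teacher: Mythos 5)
You should first note that the survey states this theorem without proof --- it is quoted from Ding--Helleseth--Martinsen \cite{DH} --- so there is no internal argument to compare against; judged on its own, your strategy (split $D(X)D(X^{-1})$ over the two $GF(2)$-cosets, reduce each block to order-$4$ cyclotomic numbers via Lemma \ref{lem10} and Theorem \ref{CiCj} with the $f$-odd twist $-C_k^4=C_{k+2}^4$, then read off multiplicities through Proposition \ref{prop4}) is exactly the machinery this survey uses for its other cyclotomic verifications, and it does go through. Your block decomposition over $GF(2)\times GF(q)$ and the parameter consistency check are both correct.

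However, the proposal stops precisely at the step that \emph{is} the theorem, and the structural guess about where the hypotheses act is backwards, at least for the first admissible triple. Carrying out your own plan for $(i,j,l)=(0,1,3)$ (the statement's ``$k$'' should read ``$l$'') with the Appendix \ref{app3} tables ($q=s^2+4t^2$, $s\equiv 1\pmod 4$, $f$ odd), the off-diagonal block $E_0(X)E_1(X^{-1})+E_1(X)E_0(X^{-1})$ has coefficient $2B+2D+4E=\tfrac{q-1}{2}$ on $C_0^4,C_2^4$ and $3A+B+C+D+2E=\tfrac{q-3}{2}$ on $C_1^4,C_3^4$, independently of $s$ and $t$ --- so it is two-valued $\{\lambda,\lambda+1\}$ for free --- whereas the diagonal block $E_0(X)E_0(X^{-1})+E_1(X)E_1(X^{-1})$ has coefficients $\tfrac{q-2-t}{2}$ on $C_0^4,C_2^4$ and $\tfrac{q-4+t}{2}$ on $C_1^4,C_3^4$. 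Since these diagonal multiplicities sum to $(q-1)\lambda$, an ADS forces them all to equal $\lambda$ exactly, which happens precisely when $t=1$: this is where ``$y=1$'' enters. So the diagonal block does not ``collapse'' generically; the rigid hypotheses are what flatten it. Two further slips: the diagonal block records the $\{0\}\times GF(q)$ differences, not $\{1\}\times GF(q)$; and the multiplicity of $(1,0)$ comes from the constant terms of the \emph{off-diagonal} block, namely $2|E_0\cap E_1|=2f=\lambda+1$, not from the diagonal part. None of this derails your method --- it completes after the analogous computation for each listed triple and both blocks --- but until that case-by-case arithmetic is actually carried out, what you have is a correct plan rather than a proof.
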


 \begin{thm}\cite{DH} \label{thm57}
Let $i, j, l \in \{0, 1, 2, 3\}$ be three pairwise distinct integers, and define
$$
D = ( \{0\} \times (C_i^4 \hs \cup \hs C_j^4)) \hs \cup \hs (\{1\} \times (C_l^4 \hs \cup C_j^4)) \hs \cup \hs \{(0,0)\}
$$
Then $D$ is a $(2q, q, \frac{q-1}{2}, \frac{3q-1}{2})$-ADS in $G = GF(2) \times GF(q)$ if
\begin{itemize}
\item $y = 1$ and $(i,j,k) \in \{(0,1,3), (0,2,3), (1,2,0), (1,3,0)\}$, or
\item $x = 1$ and $(i,j,k) \in \{(0,1,2), (0,3,2), (1,0,3), (1,2,3) \}$
\end{itemize}
\end{thm}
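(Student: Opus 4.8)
The plan is to mimic the verification of construction~7 in Section~\ref{sec7}: translate the claim into a group ring identity in $\mathbb{Z}[G]$ with $G = GF(2) \times GF(q)$, and reduce it to known cyclotomic number evaluations of order~4. Write $G = GF(2) \times GF(q)$ and identify $D$ with its group ring element. Since $D = (\{0\} \times (C_i^4 \cup C_j^4)) \cup (\{1\} \times (C_l^4 \cup C_j^4)) \cup \{(0,0)\}$, expanding $D(X)D(X^{-1})$ produces a sum of terms indexed by the two $GF(2)$-coordinates: the $\{0\}$--$\{0\}$ and $\{1\}$--$\{1\}$ interactions land in $\{0\} \times GF(q)$, while the mixed $\{0\}$--$\{1\}$ interactions land in $\{1\} \times GF(q)$. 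By Proposition~\ref{prop4}, it suffices to show this equals $k \cdot \underline{1} + \lambda S(X) + (\lambda+1)(G(X) - S(X) - \underline{1})$ for a $t$-subset $S$, where $(k,\lambda,t) = (q, \frac{q-1}{2}, \frac{3q-1}{2})$.

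First I would compute the contribution in the subgroup $\{0\} \times GF(q)$. This comes from $((C_i^4 \cup C_j^4)(X) + \underline{1})((C_i^4 \cup C_j^4)(X^{-1}) + \underline{1})$, i.e. the square of $(C_i^4 \cup C_j^4 \cup \{0\})(X)$ (using Lemma~\ref{lem9}, and noting $f = \frac{q-1}{4}$ is odd since $q \equiv 5 \pmod 8$, so $-C_k^4 = C_{k+2}^4$). Using Corollary~\ref{cor38} and the identity in Theorem~\ref{CiCj}, this expands into $\frac{q-1}{2} \cdot \underline{1}$ plus a combination $\sum_{m=0}^{3} \alpha_m C_m^4(X)$ plus a multiple of $\underline 1$ from the $\{0\}$ cross terms, where the $\alpha_m$ are explicit sums of four cyclotomic numbers of order~4. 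Second, I would compute the contribution in the coset $\{1\} \times GF(q)$, which is $(C_i^4 \cup C_j^4)(X)(C_l^4 \cup C_j^4)(X^{-1}) + (C_l^4 \cup C_j^4)(X)(C_i^4 \cup C_j^4)(X^{-1}) + (C_i^4 \cup C_j^4)(X) + (C_l^4 \cup C_j^4)(X) + (C_i^4 \cup C_j^4)(X^{-1}) + (C_l^4 \cup C_j^4)(X^{-1})$; again Lemma~\ref{lem9} and Lemma~\ref{lem10} turn each product into $\sum_m \beta_m C_m^4(X)$ with $\beta_m$ explicit cyclotomic number sums. Third, I would substitute the order-4 cyclotomic number formulas from Appendix~\ref{app3} (in terms of the quadratic partition $q = x^2 + 4y^2$, $x \equiv 1 \pmod 4$), and verify that for each of the eight listed triples $(i,j,l)$ — four with $y = 1$, four with $x = 1$ — all the coefficients $\alpha_m$ and $\beta_m$ take only the two consecutive values $\frac{q-3}{4}$ and $\frac{q-1}{4}$, with the multiplicity of the smaller value totaling $t = \frac{3q-1}{2}$ across both cosets, plus the residual $\underline 1$-coefficient assembling correctly to $k = q$.

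The main obstacle is the bookkeeping in step three: the coefficients $\alpha_m, \beta_m$ are each sums of four cyclotomic numbers of order~4, and the order-4 cyclotomic numbers split into cases according to whether $f$ is even or odd and whether one is in the ``$p \equiv 1 \pmod 4$'' branch, so one must carefully track which of the 16 cyclotomic numbers appears and plug in its $(x,y)$-expression. The specialization $y = 1$ (resp. $x = 1$) is exactly what collapses certain $\frac{1}{16}$-denominator expressions so that the four distinct coefficient values degenerate into just $\{\frac{q-3}{4}, \frac{q-1}{4}\}$; checking this degeneracy for each admissible triple, and confirming it \emph{fails} for the non-listed triples, is the crux. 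A useful sanity check along the way is Theorem~\ref{thm13}: one should verify $k(k-1) = \lambda t + (\lambda+1)(v - 1 - t)$ with $v = 2q$, $k = q$, $\lambda = \frac{q-1}{2}$, $t = \frac{3q-1}{2}$ holds identically, which it does, so the counting is at least consistent. I would also note that Theorem~\ref{thm57}'s set is obtained from the previous theorem's set by adjoining $(0,0)$, so one can shortcut part of the computation by reusing the $(2q, q-1, \frac{q-3}{2}, \frac{3(q-1)}{2})$ identity and tracking only the change induced by adding $\underline 1$, exactly as construction~7 was derived from Lehmer's difference set result.
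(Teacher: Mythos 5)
The survey itself does not prove Theorem \ref{thm57}; it is quoted from \cite{DH}. Your overall strategy is the natural one and matches the verification technique used elsewhere in the paper (Theorems \ref{Ci}--\ref{CiCj} and the proof of construction 7): expand $D(X)D(X^{-1})$ in $\mathbb{Z}[GF(2)\times GF(q)]$, separate the part supported on $\{0\}\times GF(q)$ from the part on $\{1\}\times GF(q)$, reduce to cyclotomic numbers of order $4$ via Lemmas \ref{lem9} and \ref{lem10}, substitute the formulas of Appendix \ref{app3} with $q=x^{2}+4y^{2}$, and observe the collapse to two consecutive values when $x=1$ or $y=1$. Your sanity check of Theorem \ref{thm13} is also correct.

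However, the sketch as written has concrete errors that would make the verification fail if carried out literally. Write $A=C_i^4\cup C_j^4\cup\{0\}$ and $B=C_l^4\cup C_j^4$. The contribution to $\{0\}\times GF(q)$ is $A(X)A(X^{-1})+B(X)B(X^{-1})$; your step one keeps only the square of $A$ and drops $B(X)B(X^{-1})$, even though you correctly noted that the $\{1\}$--$\{1\}$ interactions land in this coset. The contribution to $\{1\}\times GF(q)$ is $A(X)B(X^{-1})+B(X)A(X^{-1})=(C_i^4\cup C_j^4)(X)(C_l^4\cup C_j^4)(X^{-1})+(C_l^4\cup C_j^4)(X)(C_i^4\cup C_j^4)(X^{-1})+(C_l^4\cup C_j^4)(X)+(C_l^4\cup C_j^4)(X^{-1})$; the additional terms $(C_i^4\cup C_j^4)(X^{\pm1})$ in your list do not belong there, because $(0,0)$ paired with the $\{0\}$-part gives differences in the $\{0\}$-coset (and note the element $(1,0)$ acquires multiplicity $2|A\cap B|=\frac{q-1}{2}$, which must be counted among the $t$ elements). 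Most importantly, the target of the check is wrong: by Proposition \ref{prop4} every nonidentity element must occur with multiplicity $\lambda=\frac{q-1}{2}$ or $\lambda+1=\frac{q+1}{2}$, so it is the summed per-coset coefficients that must be two-valued at $\frac{q-1}{2}$ and $\frac{q+1}{2}$; your claimed values $\frac{q-3}{4}$ and $\frac{q-1}{4}$ are not consecutive integers, and $\frac{q-3}{4}$ is not even an integer when $q\equiv 5\pmod 8$. Finally, the proposed shortcut of adjoining $\underline{1}$ to the set of the preceding theorem only applies to the triples appearing in both statements; for the remaining triples that base set is not an ADS, so you need its full difference-function distribution (the explicit group-ring expansion), not merely its ADS parameters.
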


The next set of results is due to Zhang,  Lei, and Zhang. For the first construction, they considered a prime power $q = 2f+1$ where $f \equiv 1(\text{mod }2)$. In this case, the cyclotomic class $C_1^2$ is a $(q, \frac{q-1}{2}, \frac{q-3}{4})$-difference set in $(GF(q),+)$. Thus, by Corollary \ref{cor55}, we have that $D = (\{0\} \times (C_1^2)^*) \hs \cup \hs (\{1,2,3\} \times C_1^2)$ is a $(4q, 2q-1, q-2, q-1)$-ADS in $\mathbb{Z}_4 \times GF(q)$. The following construction is obtained by augmenting this construction with the elements $(1,0)$ and $(3,0)$.

\begin{thm}\cite{ZLZ}\label{thm58}
Let $q = 2f+1$ be a prime power with $f \equiv 1(\text{mod }2)$. Then
$$
D = (\{0\} \times C_0^2) \hs \cup \hs  (\{1,2,3\}) \times C_1^2) \hs \cup \hs \{(0,0), (1,0), (3,0)\}
$$
is a $(4q, 2q+1, q, q-1)$-ADS in $\mathbb{Z}_4 \times GF(q)$.
\end{thm}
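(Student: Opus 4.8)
The plan is to verify the group-ring identity of Proposition \ref{prop4} by computing $D(X)D(X^{-1})$ directly in $\mathbb{Z}[\mathbb{Z}_4\times GF(q)]$, organizing the work along the $\mathbb{Z}_4$-coordinate. Since $q=2f+1$ with $f$ odd we have $q\equiv 3\pmod 4$, so $-1$ is a non-square in $GF(q)$; hence $-C_0^2=C_1^2$ and $-C_1^2=C_0^2$ by Lemma \ref{lem9}. Identify $\mathbb{Z}[GF(q)]$ with the subring of $\mathbb{Z}[\mathbb{Z}_4\times GF(q)]$ spanned by $\{X^{(0,t)}:t\in GF(q)\}$, write $C_k:=C_k^2(X)$ inside it, put $\underline 1=X^{(0,0)}$, and set $F:=\underline 1+C_0+C_1$, the simple quantity of $\{0\}\times GF(q)$. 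Lemma \ref{lem10} in the case $e=2$ (together with the order-$2$ cyclotomic numbers, or equivalently the Paley difference-set relation of Proposition \ref{prop6} since $C_1(X^{-1})=C_0$) supplies the three products I will reuse:
$$C_0C_0=\tfrac{q-3}{4}C_0+\tfrac{q+1}{4}C_1,\qquad C_1C_1=\tfrac{q+1}{4}C_0+\tfrac{q-3}{4}C_1,\qquad C_0C_1=\tfrac{q-1}{2}\underline 1+\tfrac{q-3}{4}(C_0+C_1).$$

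Next I would record the $\mathbb{Z}_4$-layers of $D$. Writing $D(X)=\sum_{i\in\mathbb{Z}_4}X^{(i,0)}u_i$ with $u_i$ in the subring above, the definition of $D$ gives $u_0=C_0+\underline 1$, $u_1=u_3=C_1+\underline 1$, $u_2=C_1$. Because $u_1=u_3$ and $u_0,u_2$ are negation-invariant, the inversion $(i,t)\mapsto(-i,-t)$, combined with $-C_0=C_1$, $-C_1=C_0$, simply swaps $C_0$ and $C_1$ in each layer: $D(X^{-1})=\sum_{j}X^{(j,0)}v_j$ with $v_0=C_1+\underline 1$, $v_1=v_3=C_0+\underline 1$, $v_2=C_0$. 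Hence $D(X)D(X^{-1})=\sum_{m=0}^{3}X^{(m,0)}w_m$ with $w_m=\sum_{i+j\equiv m}u_iv_j$ (indices modulo $4$). Expanding each $w_m$ — a sum of four of the products $u_iv_j$ — and reducing by the three identities above and $\underline 1\,C_k=C_k$ is a routine computation; the outcome is
$$w_0=(2q+1)\underline 1+q(C_0+C_1),\qquad w_1=w_2=w_3=(q+1)F.$$

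To conclude, take $S=\{0\}\times GF(q)^{\times}$, so $S(X)=C_0+C_1$ and $|S|=q-1$. Since $G(X)=\big(\underline 1+X^{(1,0)}+X^{(2,0)}+X^{(3,0)}\big)F$ and $S(X)+\underline 1=F$, we get $G(X)-S(X)-\underline 1=\big(X^{(1,0)}+X^{(2,0)}+X^{(3,0)}\big)F$, and comparing with the two displays above yields $D(X)D(X^{-1})=(2q+1)\underline 1+q\,S(X)+(q+1)\big(G(X)-S(X)-\underline 1\big)$. By Proposition \ref{prop4}, $D$ is a $(4q,\,2q+1,\,q,\,q-1)$-ADS.

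The one place requiring care is the middle step: sixteen products $u_iv_j$ must be combined, and the crux is that the cross terms at $\mathbb{Z}_4$-levels $m=1$ and $m=3$ — which mix $(C_0+\underline 1)^2$, $(C_1+\underline 1)^2$, and two $C_0C_1$-type terms — collapse exactly to $(q+1)F$ with no residual deficiency. That is precisely what confines all deficient differences of $D$ to level $m=0$ and pins $t$ down to $q-1$. One could instead start from the $(4q,2q-1,q-2,q-1)$-ADS of Corollary \ref{cor55} and track the effect of adjoining the two elements $(1,0)$ and $(3,0)$, but this requires the same order-$2$ cyclotomic input, so the direct computation is the cleaner route.
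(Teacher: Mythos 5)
Your proof is correct, and I verified the details: with $f$ odd (so $q\equiv 3\pmod 4$ and $-C_0^2=C_1^2$), the three order-$2$ products you quote agree with Lemma \ref{lem10} and Appendix \ref{app1}; the layer decomposition $u_0=C_0+\underline 1$, $u_1=u_3=C_1+\underline 1$, $u_2=C_1$ and the inverted layers $v_0=C_1+\underline 1$, $v_1=v_3=C_0+\underline 1$, $v_2=C_0$ are right; and the convolutions do collapse to $w_0=(2q+1)\underline 1+q(C_0+C_1)$ and $w_1=w_2=w_3=(q+1)F$, which with $S=\{0\}\times GF(q)^{\times}$ is exactly the identity of Proposition \ref{prop4} with $k=2q+1$, $\lambda=q$, $t=q-1$ (a spot check in $\mathbb{Z}_4\times GF(3)\cong\mathbb{Z}_{12}$ confirms the deficient differences are precisely $\{0\}\times GF(q)^{\times}$). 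Note, however, that the survey does not prove this theorem at all: it cites Zhang--Lei--Zhang and only remarks that the set is the $(4q,2q-1,q-2,q-1)$-ADS of Corollary \ref{cor55} (with $D_1=C_1^2$) augmented by $(1,0)$ and $(3,0)$. So your direct group-ring computation supplies an argument the paper omits; it is the same technique the paper uses for its other cyclotomic verifications (e.g., construction (7) in Section \ref{sec7}), whereas the alternative you mention at the end --- starting from Corollary \ref{cor55} and tracking the effect of adjoining $(1,0)$ and $(3,0)$ --- is the route closest to how the paper frames the statement. One small wording caveat: $u_0$ and $u_2$ are not ``negation-invariant'' in the $GF(q)$-coordinate (negation swaps $C_0$ and $C_1$); what your computation actually uses is that the $\mathbb{Z}_4$-inversion fixes layers $0,2$ and swaps layers $1,3$, which is harmless since $u_1=u_3$, and your displayed $v_j$ are correct.
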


Next Zhang, Lei, and Zhang set out to construct almost difference sets in $G = GF(p) \times GF(q)$, where $p$ and $q$ are prime powers. They obtained the following results:

\begin{thm} \cite{ZLZ}
Let $G = GF(p) \times GF(q)$. Then
$$
D := \{ (a,b) \in G \hs | \hs a,b \text{ are both square or nonsquare} \}
$$
is an almost difference set in $G$ if and only if $q = p \pm 2$ or $q = p$.
\end{thm}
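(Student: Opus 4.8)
The plan is to work in the group ring $\mathbb{Z}[G]\cong\mathbb{Z}[GF(p)]\otimes_{\mathbb{Z}}\mathbb{Z}[GF(q)]$, compute $D(X)D(X^{-1})$ in closed form, and read off the difference function $d_D$. Writing $C_0^2$ for the nonzero squares and $C_1^2$ for the nonsquares in each field (so ``both square or both nonsquare'' means $D=(C_0^2\times C_0^2)\cup(C_1^2\times C_1^2)$, with $0$ excluded in each coordinate, and $p,q$ odd), we have $|D|=k:=\frac{(p-1)(q-1)}{2}$ and, identifying $D$ with $\sum_{i\in\{0,1\}}C_i^2(X)\otimes C_i^2(X)$,
$$
D(X)D(X^{-1})=\sum_{i,j\in\{0,1\}}\Bigl(C_i^2(X)\,C_j^2(X^{-1})\Bigr)_{GF(p)}\otimes\Bigl(C_i^2(X)\,C_j^2(X^{-1})\Bigr)_{GF(q)}.
$$

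First I would record closed forms for the one–variable factors $C_i^2(X)C_j^2(X^{-1})$ in $\mathbb{Z}[GF(p)]$ (and the $q$–analogues). By Lemma \ref{lem9}, $C_j^2(X^{-1})$ equals $C_j^2(X)$ when $p\equiv1\pmod4$ and $C_{j+1}^2(X)$ when $p\equiv3\pmod4$; feeding this into Lemma \ref{lem10} (equivalently Corollary \ref{cor38}) together with the order–$2$ cyclotomic numbers of Appendix \ref{app1} writes each factor explicitly as a $\mathbb{Z}$–combination of $\underline1$, $C_0^2(X)$, $C_1^2(X)$. Concretely this recovers the familiar facts that $C_0^2$ is a Paley difference set when $p\equiv3\pmod4$ and a Paley–type partial difference set when $p\equiv1\pmod4$, with the mixed products $C_0^2(X)C_1^2(X^{-1})$ a further $e=2$ cyclotomic evaluation; the coefficients depend only on $p\bmod 4$, and symmetrically on $q\bmod4$.

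Next I would substitute these into the four–term tensor sum and collect the coefficients of the basis elements $\underline1\otimes\underline1$ (which must equal $k$, a useful check), $\underline1\otimes C_i^2$, $C_i^2\otimes\underline1$, $C_i^2\otimes C_j^2$. The outcome is that $d_D\equiv\frac{(p-1)(q-3)}{4}$ on $\{0\}\times GF(q)^{\times}$ and $d_D\equiv\frac{(q-1)(p-3)}{4}$ on $GF(p)^{\times}\times\{0\}$, while on $GF(p)^{\times}\times GF(q)^{\times}$ one gets the single value $\frac{pq-2p-2q+5}{4}$ when $p\not\equiv q\pmod4$, and the two values $\frac{pq-2p-2q+7}{4}$ (on $(C_0^2\times C_0^2)\cup(C_1^2\times C_1^2)$) and $\frac{pq-2p-2q+3}{4}$ (on $(C_0^2\times C_1^2)\cup(C_1^2\times C_0^2)$) when $p\equiv q\pmod4$. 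A pleasant simplification I would exploit to control the bookkeeping is that the value(s) of $d_D$ off the two coordinate axes depend only on whether $p\equiv q\pmod4$, collapsing the four residue combinations into two.

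Finally I would invoke Proposition \ref{prop4}: $D$ is a $(pq,k,\lambda,t)$–ADS with $0<t<pq-1$ exactly when the values of $d_D$ on $G\setminus\{0\}$ all lie in a common $\{\lambda,\lambda+1\}$ and both values occur. All the block–values above are integers, and their pairwise differences are affine in $q-p$, so a short elementary check finishes the argument: when $p\equiv q\pmod4$ the requirement forces $q=p$ (and then $d_D$ genuinely takes the two values $\frac{pq-2p-2q+7}{4}$ and $\frac{pq-2p-2q+3}{4}$), while when $p\not\equiv q\pmod4$ it forces $q=p\pm2$ (and then $d_D$ again takes two consecutive values, with $t=p-1$ or $t=q-1$). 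Since $q=p\pm2$ is automatically incompatible with $p\equiv q\pmod4$ and $q=p$ with $p\not\equiv q\pmod4$, the two cases merge into the claimed statement: $D$ is an almost difference set iff $q\in\{p-2,\,p,\,p+2\}$. The only genuine labour lies in the first two steps — four combinations of $p,q$ modulo $4$, each with slightly different cyclotomic–number substitutions — so the main obstacle is organizing that calculation cleanly rather than any conceptual difficulty.
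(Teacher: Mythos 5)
Your proposal is correct, and in fact it supplies a proof where the survey gives none: the paper merely states this result with a citation to Zhang--Lei--Zhang and records the resulting parameters in the following remark, so there is no in-paper argument to compare against. Your route is exactly the one the survey's machinery is set up for (Proposition \ref{prop4} plus Lemma \ref{lem9}, Lemma \ref{lem10}/Corollary \ref{cor38} and the order-$2$ cyclotomic numbers of Appendix \ref{app1}), and your block values check out: $d_D=\frac{(p-1)(q-3)}{4}$ on $\{0\}\times GF(q)^{\times}$, $d_D=\frac{(p-3)(q-1)}{4}$ on $GF(p)^{\times}\times\{0\}$, the single value $\frac{(p-2)(q-2)+1}{4}=\frac{pq-2p-2q+5}{4}$ off the axes when $p\not\equiv q\pmod 4$, and the pair $\frac{pq-2p-2q+5\pm 2}{4}$ (split according to $\eta_p(w_1)\eta_q(w_2)=\pm1$, i.e.\ according to membership in $D$) when $p\equiv q\pmod 4$; the cancellation or reinforcement of the two character terms via $\eta_p(-1)\eta_q(-1)$ is precisely why only $p\bmod 4$ versus $q\bmod 4$ matters off the axes. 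The final bookkeeping is also right: with $C_-=\frac{pq-2p-2q+3}{4}$ one has $A-C_-=\frac{q-p}{4}=-(B-C_-)$ in the congruent case, forcing $q=p$, and $A-C_0=\frac{q-p-2}{4}$, $B-C_0=\frac{p-q-2}{4}$ summing to $-1$ in the incongruent case, forcing $q=p\pm2$; conversely these choices give exactly two consecutive values, both actually attained (so $0<t<v-1$ and one does not degenerate to a difference set --- a point worth stating explicitly since the survey excludes $t=0,v-1$). Your parameters agree with the paper's remark, namely $(p^2,\frac{(p-1)^2}{2},\frac{p^2-4p+3}{4},\frac{p^2+2p-3}{2})$ for $q=p$ and $t=p-1$ with $\lambda=\frac{(p+1)(p-3)}{4}$ for $q=p+2$ (the remark's printed denominator $3$ is evidently a typo). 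The only remaining labour is what you already identify: writing out the four one-variable products cleanly, which your quadratic-character bookkeeping handles without needing a separate case for each of the four residue combinations.
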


\begin{rem}\cite{ZLZ}
When $q = p+2$, $D$ is a $(p(p+2), \frac{p^2-1}{2}, \frac{(p+1)(p-3)}{3}, p-1)$-cyclic almost difference set. In fact, $D$ is isomorphic to the almost difference set in $\mathbb{Z}_{p(p+2)}$ described in Ke's thesis \cite{Ke}. When $p = q$, $D$ is a $(p^2, \frac{(p-1)^2}{2}, \frac{p^2-4p+3}{4}, \frac{p^2+2p-3}{2})$-ADS which is not cyclic.
\end{rem}

\begin{thm} \cite{ZLZ}
Let $G= GF(p) \times GF(q)$ where $p \not \equiv q(\text{mod }4)$. Then,
$$
D = (GF(p) \times \{0\}) \hs \cup \hs \{(a,b) \in G \hs | \hs a,b \text{ are both square or nonsquare}\}
$$
cannot be an almost difference set and $D$ is a difference set of $G$ if and only if $q = p+2$. In fact, it is the twin-prime difference set \cite{JP}.
\end{thm}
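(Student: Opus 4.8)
The plan is to compute the difference function $d_D(x)=|(D+x)\cap D|$ directly for every nonzero $x=(a,b)\in G=GF(p)\times GF(q)$. Let $\eta_p,\eta_q$ be the quadratic characters of $GF(p)^\times$ and $GF(q)^\times$, extended by $\eta_p(0)=\eta_q(0)=0$, and write
$$
D=A\hs\cup\hs E,\qquad A=GF(p)\times\{0\},\qquad E=\{(a,b)\hs|\hs a,b\neq 0,\ \eta_p(a)=\eta_q(b)\},
$$
so that $A$ is a subgroup, $A\cap E=\emptyset$, and $|D|=p+\tfrac{(p-1)(q-1)}{2}$. Splitting $(D+x)\cap D$ into the four pieces $(A+x)\cap A$, $(A+x)\cap E$, $(E+x)\cap A$, $(E+x)\cap E$ reduces almost everything to a count inside a single factor; only $(E+x)\cap E$ genuinely couples the two factors, and it is computed by summing over the four sign patterns $(\varepsilon_1,\varepsilon_2)\in\{\pm1\}^2$ that prescribe $(\eta_p(a'),\eta_p(a'+a))$ and $(\eta_q(b'),\eta_q(b'+b))$. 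There are three regimes to treat: $b=0$ (that is, $x\in A\setminus\{0\}$), $a=0$ (that is, $x\in(\{0\}\times GF(q))\setminus\{0\}$), and $a,b$ both nonzero.

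The first three pieces are immediate. Since $A$ is a subgroup, $(A+x)\cap A$ is $A$ (size $p$) when $b=0$ and empty otherwise, while a one-line count of quadratic residues gives $|(A+x)\cap E|=|(E+x)\cap A|=\tfrac{p-1}{2}$ when $b\neq 0$ and $0$ when $b=0$. For $(E+x)\cap E$ in the doubly-nonzero regime each sign-pattern count has the form $\tfrac14\sum(1\pm\eta(\cdot))(1\pm\eta(\cdot))$; expanding and using $\sum_{x}\eta_p(x)\eta_p(x+a)=-1$ (valid for $a\neq 0$) together with its $GF(q)$ analogue, one obtains the order-two cyclotomic counts $\tfrac14\bigl(p-3\mp(\eta_p(a)+\eta_p(-a))\bigr)$ and $\tfrac14\bigl(p-1\pm(\eta_p(a)-\eta_p(-a))\bigr)$, and likewise over $GF(q)$. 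Here the hypothesis $p\not\equiv q\pmod 4$ does the work: it says exactly one of $\eta_p(-1),\eta_q(-1)$ equals $-1$, so that, by the order-two cyclotomic symmetry of Lemma~\ref{lem8}(2), the two ``same-sign'' counts coincide in one factor while the two ``mixed-sign'' counts coincide in the other; in the product the terms involving $\eta_p(a)$ and $\eta_q(b)$ therefore cancel, and $(E+x)\cap E$ is a constant on the entire doubly-nonzero regime. Assembling the pieces yields
$$
d_D(x)=
\begin{cases}
\tfrac14(pq+3p-3q+3), & b=0,\ x\neq 0,\\
\tfrac14(pq+p-q-1), & a=0,\ x\neq 0,\\
\tfrac14(pq+2p-2q+1), & a\neq 0,\ b\neq 0.
\end{cases}
$$

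It now suffices to read off the conclusions from the definitions. $D$ is a $(v,k,\lambda)$-difference set precisely when the three values above coincide; equating the first two forces $3p-3q+3=p-q-1$, i.e. $q=p+2$, and substituting back shows that the third value then also equals the common value $\tfrac14(p^{2}+2p-3)=\tfrac{v-3}{4}$, with $v=p(p+2)$ and $k=\tfrac{v-1}{2}$. These are exactly the twin-prime-power difference set parameters, and comparing $D$ with the description recalled in Section~\ref{sec9} identifies it as the twin-prime difference set. For the almost difference set claim, note that the first two displayed values differ by $\tfrac12(p-q+2)$, while $p\not\equiv q\pmod 4$ forces $q-p\equiv 2\pmod 4$; hence this gap is $0$ when $q=p+2$ and has absolute value at least $2$ for every other admissible $q$. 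Thus for $q\neq p+2$ the function $d_D$ takes two values at distance $\geq 2$, so it is not $\{\lambda,\lambda+1\}$-valued and $D$ is not an almost difference set, whereas for $q=p+2$ the set $D$ is an honest difference set, so $t\in\{0,v-1\}$ and $D$ is again not an almost difference set in the range $0<t<v-1$.

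The only step with real content is the $(E+x)\cap E$ bookkeeping in the doubly-nonzero case: the effort goes into organizing the four sign-pattern products so that the effect of $\eta_p(-1)\eta_q(-1)=-1$ — the cancellation of the $\eta_p(a)$- and $\eta_q(b)$-dependent terms — is transparent, after which the answer is manifestly constant and everything else is arithmetic. One can also run the argument through characters of $G$: for a nonprincipal $\psi=\psi_p\times\psi_q$ one finds $|\psi(D)|^2\in\{\tfrac{(p+1)^2}{4},\tfrac{(q-1)^2}{4},\tfrac{1+pq}{4}\}$, using $p\not\equiv q\pmod 4$ to make the Gauss-sum product $g_pg_q$ purely imaginary so the third value is unambiguous, and demanding a single value once more yields $q=p+2$.
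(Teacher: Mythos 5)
Your proposal is correct. There is nothing in the paper to compare it against: the survey states this theorem without proof, citing \cite{ZLZ}, so your direct computation of $d_D$ is a self-contained verification rather than a rival to an in-paper argument. I checked the key steps. The decomposition $D=A\cup E$ with $A=GF(p)\times\{0\}$ a subgroup gives the four intersection counts as you claim, and in the doubly nonzero regime the sign-pattern counts are $N_p(\varepsilon_1,\varepsilon_2)=\frac{1}{4}\bigl(p-2-\varepsilon_1\varepsilon_2-\varepsilon_2\eta_p(a)-\varepsilon_1\eta_p(-a)\bigr)$ together with the analogous $N_q$; since $p\not\equiv q\pmod 4$ forces $\eta_p(-1)\eta_q(-1)=-1$, in one factor the two equal-sign counts lose their dependence on the shift while in the other the two mixed-sign counts do, so $\sum_{\varepsilon_1,\varepsilon_2}N_pN_q=\frac{1}{8}\bigl((p-3)(q-3)+(p-1)(q-1)\bigr)$, independent of $(a,b)$ --- exactly the order-two cyclotomy of Lemma \ref{lem8} and Appendix \ref{app1} carried out by hand. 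Your three displayed values of $d_D$ are correct; they form an arithmetic progression with common difference $\frac{1}{4}(p-q+2)$ (an integer, by the congruence), and each of the three regimes is nonempty, so for $q\neq p+2$ the function $d_D$ is genuinely three-valued with extreme values at distance at least $2$, ruling out an ADS, while for $q=p+2$ it is constant, giving the $\bigl(p(p+2),\frac{p(p+2)-1}{2},\frac{p(p+2)-3}{4}\bigr)$ difference set, i.e.\ the twin-prime(-power) set described in Section \ref{sec9}; and your closing remark that an honest difference set has $t\in\{0,v-1\}$, hence is excluded by the paper's convention $0<t<v-1$, is the right reading of ``cannot be an almost difference set'' in that case. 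The only caveats are cosmetic: the statement implicitly takes $p,q$ to be odd prime powers and ``square'' to mean nonzero square (your reading, consistent with the preceding theorem and its remark), and the Gauss-sum aside at the end is fine but not needed.
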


The next construction, based on Paley-Hadamard difference sets, was recently given by Tang and Ding. It  elegantly generalizes three previously known constructions already covered above. We present it now to give the reader a sense of the historical time line surrounding almost difference sets.

\begin{thm}\cite{TD}
Suppose that $A$ and $B$ are, respectively, Paley-Hadamard difference sets with parameters $(l, \frac{l+1}{2}, \frac{l+1}{4})$ or $(l, \frac{l-1}{2}, \frac{l-3}{4})$ in $(G, +)$ and define
$$
D := (\{0,2\} \times A) \hs \cup \hs (\{1\} \times B) \hs \cup \hs (\{3\} \times B^*)
$$
where $B^*$ is the complement of $B$. Then $D$ is a $(4l, 2l+1, l, l-1)$ or $(4l, 2l-1, l-2, l-1)$-ADS in $(\mathbb{Z}_4, +) \times (G,+)$.
\end{thm}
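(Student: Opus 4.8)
The plan is to verify the group ring identity of Proposition~\ref{prop4} by a direct computation in $\mathbb{Z}[\Gamma]$, where $\Gamma := \mathbb{Z}_4 \times G$. Let $Y$ be the indeterminate attached to the $\mathbb{Z}_4$ coordinate and $X$ the one attached to $G$, so that an element of $\mathbb{Z}[\Gamma]$ is a polynomial in $Y$, reduced modulo $Y^4 = 1$, with coefficients in $\mathbb{Z}[G]$. In this notation
$$ D = (1 + Y^2)A(X) + Y\,B(X) + Y^3 B^{*}(X), \qquad B^{*}(X) = G(X) - B(X), $$
and, since $Y^{-1} = Y^{3}$,
$$ D(Y^{-1},X^{-1}) = (1 + Y^2)A(X^{-1}) + Y^3 B(X^{-1}) + Y\,B^{*}(X^{-1}). $$
First I would multiply these two expressions, grouping the nine products by the exponent of $Y$ (read modulo $4$). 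The four mixed $A$--$B$ products land entirely in the $Y^{1}$ and $Y^{3}$ slices, while the $A$--$A$ and $B$--$B$ products populate $Y^{0}$ and $Y^{2}$. The bookkeeping uses only elementary facts: a Paley--Hadamard difference set and its complement are both difference sets, so by Proposition~\ref{prop6} each of $A(X)A(X^{-1})$, $B(X)B(X^{-1})$, $B^{*}(X)B^{*}(X^{-1})$ has the form $(k-\lambda)\underline{1} + \lambda\,G(X)$ with the appropriate parameters; also $T(X)G(X) = |T|\,G(X)$ for every $T \subseteq G$, $G(X^{-1}) = G(X)$, and $B(X) + B^{*}(X) = G(X)$.

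By the last identity the $Y^{1}$ and $Y^{3}$ coefficients telescope: each equals $A(X)G(X) + G(X)A(X^{-1}) = 2|A|\,G(X)$, carrying no $\underline{1}$ term. The $Y^{2}$ coefficient is $2A(X)A(X^{-1}) + B(X)B^{*}(X^{-1}) + B^{*}(X)B(X^{-1})$, and substituting $B^{*} = G - B$ turns it into $2A(X)A(X^{-1}) + 2|B|\,G(X) - 2B(X)B(X^{-1})$. Here the crucial arithmetic observation is that \emph{both} admissible parameter pairs satisfy $k_A-\lambda_A = k_B-\lambda_B = \tfrac{l+1}{4}$, which makes the two $\underline{1}$ contributions cancel and leaves a multiple of $G(X)$. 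The $Y^{0}$ coefficient $2A(X)A(X^{-1}) + B(X)B(X^{-1}) + B^{*}(X)B^{*}(X^{-1})$ evaluates to $(l+1)\underline{1} + \lambda\,G(X)$.

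Now using $|D| = 2|A| + l$ together with the identities $2|A| = \lambda + 1$ and $k - \lambda - 1 = l$, which hold for either parameter choice, the four slices assemble to
$$ D(Y,X)\,D(Y^{-1},X^{-1}) = (k-\lambda-1)\,\underline{1} + (\lambda+1)\bigl(1 + Y + Y^2 + Y^3\bigr)G(X) - \bigl(\{0\}\times(G\setminus\{0\})\bigr)(Y,X). $$
Reading the right-hand side against the template of Proposition~\ref{prop4}, with $t$-set $S = \{0\}\times(G\setminus\{0\})$, which has $l-1$ elements and avoids the identity $(0,0)$, shows that $D$ is a $(4l,\,k,\,\lambda,\,l-1)$-ADS; one reads off $(k,\lambda) = (2l+1,l)$ or $(2l-1,l-2)$ according as $|A| = \tfrac{l+1}{2}$ or $\tfrac{l-1}{2}$. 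Theorem~\ref{comp} could also be used to pass between the two complementary cases, but carrying both through is just as quick.

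The obstacle I anticipate is purely organizational: keeping the $Y$-graded products of the nine terms straight and inserting the difference-set relations for $A$, $B$ and $B^{*}$ so that every off-identity slice collapses to a scalar multiple of $G(X)$. The one conceptual point to get right is why $B^{*}$, rather than a second copy of $B$, sits in the $Y^{3}$ position: it is exactly this choice that makes the $Y^{1}$ and $Y^{3}$ slices telescope through $B + B^{*} = G$ and that, together with $k_B - \lambda_B = \tfrac{l+1}{4}$, annihilates the constant term of the $Y^{2}$ slice.
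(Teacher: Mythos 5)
Your computation is correct. The $Y$-graded expansion of $D(Y,X)D(Y^{-1},X^{-1})$, the telescoping of the $Y^{1}$ and $Y^{3}$ slices via $B(X)+B^{*}(X)=G(X)$, the cancellation of the $\underline{1}$-term in the $Y^{2}$ slice using $k_A-\lambda_A=k_B-\lambda_B=\frac{l+1}{4}$ (which indeed holds for both Paley--Hadamard parameter sets, and likewise for $B^{*}$), and the final reading of the $Y^{0}$ slice as $(l+1)\underline{1}+\lambda G(X)$ all check out; the exceptional set is exactly $S=\{0\}\times(G\setminus\{0\})$ of size $l-1$, giving $t=l-1$ and $(k,\lambda)=(2l+1,l)$ or $(2l-1,l-2)$ according to $|A|$. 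Be aware that the paper itself offers no proof of this theorem: it is quoted from Tang and Ding \cite{TD}, accompanied only by a remark explaining how it subsumes Theorem \ref{thm51}, Corollary \ref{cor55} and Theorem \ref{thm58}. So there is no in-paper argument to compare against; your verification is precisely in the style the paper uses for the constructions it does verify (for instance construction (7) of the cyclotomic theorem in Section \ref{sec7}), resting on Propositions \ref{prop4} and \ref{prop6} together with the standard fact that the complement of a $(l,k,\lambda)$-difference set is an $(l,l-k,l-2k+\lambda)$-difference set. A small bonus of your route is that it proves slightly more than the statement literally asserts: $A$ and $B$ may independently be of either Paley--Hadamard type, and the resulting ADS parameters depend only on $|A|$, which clarifies the ``respectively \dots or'' phrasing.
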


\begin{rem}
The above theorem generalizes the constructions in Theorems \ref{thm51} and \ref{thm58} as well as Corollary \ref{cor55}. Let $A = B^*+\delta$ for some $0 \leq \delta < l$, then the construction above is the same as the one given in Theorem \ref{thm51}. Next, let $A = B$, then the construction above is the same as Corollary \ref{cor55}. Finally, we obtain the construction in Theorem \ref{thm58} by letting $A = C_1^2 \hs \cup \hs \{0\}$ and $B = C_1^2$ in the theorem above.
\end{rem}

The final construction presented here, due to Ding, Pott, and Wang, uses skew Hadamard difference sets.  Recall that a difference set $D$ of $(G,+)$ is called a \textit{skew Hadamard difference set} if $G = D \hs  \dot\cup \hs (-D) \hs  \dot\cup \hs \{0\}$.

\begin{thm}\cite{DPW}
Let $q$ and $q+4$ be two prime powers with $q \equiv 3(\text{mod }4)$. Let $E$ and $F$ be two skew Hadamard difference sets in the abelian groups $(GF(q),+)$ and $(GF(q+4),+)$ respectively. Then the set
$$
D := (E\times F) \hs \cup \hs (-E \times -F) \hs \cup \hs (GF(q) \times \{0\})
$$
is a $(q(q+4), \frac{q^2+4q-3}{2}, \frac{q^2+4q-9}{4}, \frac{q^2+4q-5}{2})$-ADS in $(GF(q),+) \times (GF(q+4),+)$.
\end{thm}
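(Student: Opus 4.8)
The plan is to verify the group-ring characterization in Proposition \ref{prop4}. Work in $\mathbb{Z}[G]$ with $G=(GF(q),+)\times(GF(q+4),+)$, and write $G_1(X)=\sum_{g\in GF(q)}X^{(g,0)}$ and $G_2(X)=\sum_{h\in GF(q+4)}X^{(0,h)}$ for the simple quantities of the two coordinate subgroups, so $G_1(X)G_2(X)=G(X)$. Identifying $E$ with $\sum_{e\in E}X^{(e,0)}$ and $F$ with $\sum_{f\in F}X^{(0,f)}$, the set $E\times F$ becomes $E(X)F(X)$, the set $(-E)\times(-F)$ becomes $E(X^{-1})F(X^{-1})$, and $GF(q)\times\{0\}$ is $G_1(X)$, so $D(X)=E(X)F(X)+E(X^{-1})F(X^{-1})+G_1(X)$. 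The first and crucial observation is that $-D=D$: negation swaps $E\times F$ with $(-E)\times(-F)$ and fixes the subgroup $GF(q)\times\{0\}$. Hence $D(X^{-1})=D(X)$, and it suffices to compute the single square $D(X)^2$.

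Next I would record the identities coming from the skew Hadamard hypothesis. Since $q\equiv q+4\equiv 3\pmod 4$, the sets $E$ and $F$ are respectively a $(q,\frac{q-1}{2},\frac{q-3}{4})$- and a $(q+4,\frac{q+3}{2},\frac{q+1}{4})$-difference set, so Proposition \ref{prop6} gives
\[
E(X)E(X^{-1})=\frac{q-1}{2}\cdot\underline{1}+\frac{q-3}{4}(G_1(X)-\underline{1}),\qquad F(X)F(X^{-1})=\frac{q+3}{2}\cdot\underline{1}+\frac{q+1}{4}(G_2(X)-\underline{1}),
\]
while the defining property of a skew Hadamard difference set gives $E(X)+E(X^{-1})+\underline{1}=G_1(X)$ and $F(X)+F(X^{-1})+\underline{1}=G_2(X)$. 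Multiplying the skew relation for $E$ by $E(X)$, and using $E(X)G_1(X)=\frac{q-1}{2}G_1(X)$ together with the difference-set identity, yields $E(X)^2=\frac{q+1}{4}(G_1(X)-\underline{1})-E(X)$; symmetrically $E(X^{-1})^2=\frac{q+1}{4}(G_1(X)-\underline{1})-E(X^{-1})$, and the same computation in the second coordinate gives $F(X)^2=\frac{q+5}{4}(G_2(X)-\underline{1})-F(X)$ and $F(X^{-1})^2=\frac{q+5}{4}(G_2(X)-\underline{1})-F(X^{-1})$.

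Then I would expand
\[
D(X)^2=E(X)^2F(X)^2+E(X^{-1})^2F(X^{-1})^2+G_1(X)^2+2E(X)E(X^{-1})F(X)F(X^{-1})+2\big(E(X)F(X)+E(X^{-1})F(X^{-1})\big)G_1(X)
\]
term by term, substituting the four identities above and repeatedly using that $G_1(X)$ absorbs any subset of $GF(q)$ it multiplies (so $E(X^{\pm1})G_1(X)=\frac{q-1}{2}G_1(X)$ and $G_1(X)^2=qG_1(X)$), likewise for $G_2(X)$, and $G_1(X)(F(X)+F(X^{-1}))=G_1(X)(G_2(X)-\underline{1})=G(X)-G_1(X)$. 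Rewriting $G_1(X),G_2(X),G(X)$ through $\underline{1}$ and the simple quantities of $(GF(q)\setminus\{0\})\times\{0\}$, $\{0\}\times(GF(q+4)\setminus\{0\})$ and $(GF(q)\setminus\{0\})\times(GF(q+4)\setminus\{0\})$, everything collapses to an integer combination of those three simple quantities and $\underline{1}$, plus the single leftover term $E(X)F(X)+E(X^{-1})F(X^{-1})$. Reading off coefficients (with $a=\frac{q+1}{4}$, $b=\frac{q+5}{4}=a+1$) one finds $d_D(0)=\frac{q^2+4q-3}{2}=k$, $d_D=\frac{q^2+4q-9}{4}=\lambda$ on every nonzero $(g_1,0)$, $d_D=\lambda+1$ on every $(0,g_2)$ with $g_2\neq 0$, and for $(g_1,g_2)$ with both coordinates nonzero all terms other than $E(X)F(X)+E(X^{-1})F(X^{-1})$ contribute $\lambda$, while that leftover term adds $1$ exactly when $g_1\in E$ and $g_2\in F$, or $g_1\in -E$ and $g_2\in -F$. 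Hence the set $S$ of elements with $d_D=\lambda$ consists of the $q-1$ points $(g_1,0)$ with $g_1\neq 0$ together with the both-nonzero points lying outside $(E\times F)\cup((-E)\times(-F))$, so $|S|=(q-1)+\big((q-1)(q+3)-\frac{(q-1)(q+3)}{2}\big)=\frac{(q-1)(q+5)}{2}=\frac{q^2+4q-5}{2}=t$, and Proposition \ref{prop4} completes the proof.

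The work is almost entirely in the third step: organizing the expansion so that the numerous $G_1(X)$- and $G_2(X)$-multiples recombine correctly, and in particular checking that on the both-nonzero elements all the ``regular'' contributions sum to exactly $\lambda$, leaving only the one $\{0,1\}$-valued indicator responsible for the two distinct values of $d_D$. This is where $b=a+1$ and the congruences $q\equiv q+4\equiv 3\pmod 4$ are needed to make the two values land exactly one apart. The only genuinely conceptual ingredient is the opening observation $-D=D$, which is immediate from the skew Hadamard property and reduces $D(X)D(X^{-1})$ to a single square.
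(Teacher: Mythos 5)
Your proposal is correct. One point of comparison to make first: the survey states this theorem only with the citation \cite{DPW} and gives no proof of it, so there is no argument in the paper to check you against line by line; your verification is, however, exactly in the style of the group-ring proofs the paper does include (the Calderbank--Kantor computation and construction (7) of the cyclotomic theorem), resting on Propositions \ref{prop4} and \ref{prop6}. I checked your key steps and they hold: $-D=D$ because negation swaps $E\times F$ with $(-E)\times(-F)$ and fixes the subgroup $GF(q)\times\{0\}$, so $D(X)D(X^{-1})=D(X)^2$; the parameters $(q,\frac{q-1}{2},\frac{q-3}{4})$ for $E$ and $(q+4,\frac{q+3}{2},\frac{q+1}{4})$ for $F$ are right since $q\equiv q+4\equiv 3\ (\mathrm{mod}\ 4)$; and multiplying the skew relations by $E(X)$, resp.\ $F(X)$, does give $E(X)^2=\frac{q+1}{4}(G_1(X)-\underline{1})-E(X)$ and $F(X)^2=\frac{q+5}{4}(G_2(X)-\underline{1})-F(X)$. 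Carrying out the expansion as you describe, the squares $E(X)^2F(X)^2+E(X^{-1})^2F(X^{-1})^2$ collapse to $(2ab-a-b)(G_1(X)-\underline{1})(G_2(X)-\underline{1})$ plus the leftover $E(X)F(X)+E(X^{-1})F(X^{-1})$, and the remaining terms are constant on each of the three pieces $(GF(q)\setminus\{0\})\times\{0\}$, $\{0\}\times(GF(q+4)\setminus\{0\})$, and the both-nonzero part; the resulting values are $\lambda=\frac{q^2+4q-9}{4}$ on the first piece, $\lambda+1$ on the second, and $\lambda+\chi$ on the third, with $\chi$ the indicator of $(E\times F)\cup\bigl((-E)\times(-F)\bigr)$, so $t=(q-1)+\frac{(q-1)(q+3)}{2}=\frac{q^2+4q-5}{2}$ and the coefficient of $\underline{1}$ is $k=\frac{q^2+4q-3}{2}$, as required by Proposition \ref{prop4}. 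In short, your plan is a complete and correct proof of a result the survey only quotes.
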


\begin{rem}\cite{DPW}
In addition to the classical Paley skew Hadamard difference sets, there are several other classes of skew Hadamard difference sets $($see \cite{DPW1}, \cite{DWX}, \cite{DY}$)$. Thus the above theorem can generate infinitely many almost difference sets.
\end{rem}

\section{Concluding Remarks and Open Problems}

Almost difference sets are of interest in combinatorics and find application in many areas. However, compared with difference sets, much less progress has been made on the construction of almost difference sets in the past ten years. In this section, we leave the reader with some open problems.

 \begin{enumerate}
 \item Find $(n, \frac{n-3}{2}, \frac{n-7}{4}, n-3)$-ADSs in $\mathbb{Z}_n$. These will generate sequences of period $n \equiv 3(\text{mod } 4)$ with optimal autocorrelation values $-1$ and $3$, where $k = \frac{n-3}{2}$. \\
 \item Find $(n, \frac{n-1}{2}, \frac{n-5}{4}, \frac{n-1}{2})$-ADSs in $\mathbb{Z}_n$. These will generate sequences of period $n \equiv 1 (\text{mod }4)$, where $k = \frac{n-1}{2}$.\\
 \item Find $(n, \frac{n-3}{2}, \frac{n-9}{4}, \frac{n-5}{2})$-ADSs in $\mathbb{Z}_n$. These will generate sequences of period $n \equiv 1(\text{mod }4)$ with optimal autocorrelation values $-1$ and $3$, where $k = \frac{n-3}{2}$. \\
 \item Find $(n, \frac{n}{2}-1, \frac{n-6}{4}, \frac{3(n-2)}{4})$ and $(n, \frac{n}{2}, \frac{n-2}{3}, \frac{3n-2}{4})$-ADSs in $\mathbb{Z}_n$. These will generate sequences of period $n \equiv 2 (\text{mod }4)$, where $k = \frac{n}{2}-1$ or $\frac{n}{2}$, respectively. \\
 \item In addition to the sequence given in section 9 and those given in \cite{LCE} and \cite{NCS}, construct other families of binary sequences of period $n \equiv 0(\text{mod }4)$.
 \end{enumerate}
\newpage

\appendix 
\section{Cyclotomic Numbers of Order 2} \label{app1}
Reference for the following formulas of cyclotomic numbers of order $2$ is \cite{St}.

When $e = 2$, the cyclotomic numbers are given by

\begin{enumerate}
\item $(0,0)_2 = \frac{f-2}{2}$; $(0,1)_2 = (1,0)_2 = (1,1)_2 = \frac{f}{2}$ if $f$ is even; and
\item $(0,0)_2 = (1,0)_2 = (1,1)_2 = \frac{f-1}{2}$; $(0,1)_2 = \frac{f+1}{2}$ if $f$ is odd
\end{enumerate}

\section{Cyclotomic Numbers of Order 3} \label{app2}
Note that for $e = 3$, $f$ must be odd. Reference for the following formulas of cyclotomic numbers of order $3$ is \cite{St}. 

The cyclotomic numbers for $e = 3$ are given by 

\begin{table}[!h]
\begin{center}
\label{Table 1}
\[\begin{array}{| c | c c c |}
\hline
(j,k) & 0 & 1 & 2  \\
\hline
0 & A & B & C  \\
1& B & C & D  \\
2 & C & D & B  \\
\hline
\end{array}\]
\end{center}
\end{table}

and the relations
\begin{eqnarray*}
9A &=& q-8+c \\
18B &=& 2q-4-c-9d \\
18C &=& 2q-4-c+9d \\
9D &=& q+1+c
\end{eqnarray*}
where $4q = c^2+27d^2$ with $c \equiv 1(\text{mod }3)$. 

Here it must be remarked that, if $q = p^{\alpha} \equiv 1(\text{mod }3)$, then $4q$ always admits at least one representation $c^2+27d^2$ (\cite{Dav}, Chapter 6), and, if $\alpha = 1$, at most one. For $\alpha > 1$, however, the representation is, in general, not unique. A representation $nq = x^2+Dy^2$ is said to be \textit{proper} if $(q,x) = 1$; if $p \equiv 1(\text{mod }3)$, then $q = p^{\alpha}$ has exactly one proper representation $4q = c^2+27d^2$, and it is this representation which gives rise to the cyclotomic number for $e = 3$. 

\section{Cyclotomic Numbers of Order 4} \label{app3}

Reference for the following formulas of cyclotomic numbers of order $4$ is \cite{St}. 

When $e =4$ and $f$ is odd, the cyclotomic number are given by 

\begin{table}[!h]
\begin{center}
\label{Table 1}
\[\begin{array}{| c | c c c c |}
\hline
(j,k) & 0 & 1 & 2 & 3 \\
\hline
0 & A & B & C & D \\
1& E & E & D & B  \\
2 & A & E & A & E \\
3 & E & D & B & E  \\
\hline
\end{array}\]
\end{center}
\end{table}

together with the relations

\begin{eqnarray*}
16A &=& q-7+2s\\
16B &=& q+1+2s-8t\\
16C &=& q+1-6s \\
16D &=& q+1+2s+8t \\
16E &=& q-3-2s
\end{eqnarray*}

where $q = s^2+4t^2$, with $s \equiv 1(\text{mod }4)$, is the proper representation of $q$ if $p \equiv 1(\text{mod }4)$, where the sign of $t$ is ambiguously determined. 

When $e =4$ and $f$ is even, the cyclotomic number are given by 

\begin{table}[!h]
\begin{center}
\label{Table 1}
\[\begin{array}{| c | c c c c |}
\hline
(j,k) & 0 & 1 & 2 & 3 \\
\hline
0 & A & B & C & D \\
1& B & D & E & E  \\
2 & C & E & C & E \\
3 & D & E & E & B  \\
\hline
\end{array}\]
\end{center}
\end{table}

together with the relations

\begin{eqnarray*}
16A &=& q-11-6s \\
16B &=& q-3+2s+8t\\
16C &=& q-3+2s \\
16D &=& q-3+2s-8t \\
16E &=& q+1-2s
\end{eqnarray*}

where $q = s^2+4t^2$, with $s \equiv 1(\text{mod }4)$, is the proper representation of $q$ if $p \equiv 1(\text{mod }4)$; the sign of $t$ is ambiguously determined. 

\newpage

\vskip 0.5in

\end{document}